\begin{document}

\title{Fractional Gross-Pitaevskii equations
in non-Gaussian attractive Bose-Einstein condensates}
\date{}
\maketitle

\vspace{ -1\baselineskip}

{\small
\begin{center}

{\sc  Jinge Yang}  \\
 School of Science $\&$ Key Laboratory of Engineering Mathematics and Advanced Computing,
Nanchang Institute of Technology\\
Nanchang 330099,
P.~R.~China\\
email: Jinge Yang: jgyang2007@yeah.net\\[10pt]

{\sc  Jianfu Yang } \\
Department of Mathematics,
Jiangxi Normal University\\
Nanchang, Jiangxi 330022,
P.~R.~China\\
email: Jianfu Yang: jfyang200749@sina.com\\[10pt]
\end{center}
}

\renewcommand{\thefootnote}{}
\footnote{Key words: fractional Gross-Pitaevskii equation, minimizer, asymptotic behavior.}

\begin{quote}
{\bf Abstract.}  In this paper, we investigate normalized solutions of a fractional Gross-Pitaevskii equation, which arises in an  attractive Bose-Einstein condensation  consisting of $N$ bosons
moving by L\'{e}vy flights.  We prove that there exists a positive constant $N^*$,  such that if $0<N<N^*$ and  the L\'{e}vy index  $\alpha$ closed to $2$, the fractional Gross-Pitaevskii equation admits a local minimal normalized solution $u_\alpha$ and a mountain pass solution $v_\alpha$, but there does not exist positive local minimal solution if $N>N^*$ and $\alpha$ closed to $2$. We also study the asymptotic behavior of $u_\alpha$ and $v_\alpha$  as $\alpha\to 2_-$.

\end{quote}

\newcommand{\N}{\mathbb{N}}
\newcommand{\R}{\mathbb{R}}
\newcommand{\Z}{\mathbb{Z}}

\newcommand{\cA}{{\mathcal A}}
\newcommand{\cB}{{\mathcal B}}
\newcommand{\cC}{{\mathcal C}}
\newcommand{\cD}{{\mathcal D}}
\newcommand{\cE}{{\mathcal E}}
\newcommand{\cF}{{\mathcal F}}
\newcommand{\cG}{{\mathcal G}}
\newcommand{\cH}{{\mathcal H}}
\newcommand{\cI}{{\mathcal I}}
\newcommand{\cJ}{{\mathcal J}}
\newcommand{\cK}{{\mathcal K}}
\newcommand{\cL}{{\mathcal L}}
\newcommand{\cM}{{\mathcal M}}
\newcommand{\cN}{{\mathcal N}}
\newcommand{\cO}{{\mathcal O}}
\newcommand{\cP}{{\mathcal P}}
\newcommand{\cQ}{{\mathcal Q}}
\newcommand{\cR}{{\mathcal R}}
\newcommand{\cS}{{\mathcal S}}
\newcommand{\cT}{{\mathcal T}}
\newcommand{\cU}{{\mathcal U}}
\newcommand{\cV}{{\mathcal V}}
\newcommand{\cW}{{\mathcal W}}
\newcommand{\cX}{{\mathcal X}}
\newcommand{\cY}{{\mathcal Y}}
\newcommand{\cZ}{{\mathcal Z}}

\newcommand{\abs}[1]{\lvert#1\rvert}
\newcommand{\xabs}[1]{\left\lvert#1\right\rvert}
\newcommand{\norm}[1]{\lVert#1\rVert}

\newcommand{\loc}{\mathrm{loc}}
\newcommand{\p}{\partial}
\newcommand{\h}{\hskip 5mm}
\newcommand{\ti}{\widetilde}
\newcommand{\D}{\Delta}
\newcommand{\e}{\epsilon}
\newcommand{\bs}{\backslash}
\newcommand{\ep}{\emptyset}
\newcommand{\su}{\subset}
\newcommand{\ds}{\displaystyle}
\newcommand{\ld}{\lambda}
\newcommand{\vp}{\varphi}
\newcommand{\wpp}{W_0^{1,\ p}(\Omega)}
\newcommand{\ino}{\int_\Omega}
\newcommand{\bo}{\overline{\Omega}}
\newcommand{\ccc}{\cC_0^1(\bo)}
\newcommand{\iii}{\opint_{D_1}D_i}

\theoremstyle{plain}
\newtheorem{Thm}{Theorem}[section]
\newtheorem{Lem}[Thm]{Lemma}
\newtheorem{Def}[Thm]{Definition}
\newtheorem{Cor}[Thm]{Corollary}
\newtheorem{Prop}[Thm]{Proposition}
\newtheorem{Rem}[Thm]{Remark}
\newtheorem{Ex}[Thm]{Example}

\numberwithin{equation}{section}
\newcommand{\meas}{\rm meas}
\newcommand{\ess}{\rm ess} \newcommand{\esssup}{\rm ess\,sup}
\newcommand{\essinf}{\rm ess\,inf} \newcommand{\spann}{\rm span}
\newcommand{\clos}{\rm clos} \newcommand{\opint}{\rm int}
\newcommand{\conv}{\rm conv} \newcommand{\dist}{\rm dist}
\newcommand{\id}{\rm id} \newcommand{\gen}{\rm gen}
\newcommand{\opdiv}{\rm div}

\vskip 0.2cm \arraycolsep1.5pt
\newtheorem{Lemma}{Lemma}[section]
\newtheorem{Theorem}{Theorem}[section]
\newtheorem{Definition}{Definition}[section]
\newtheorem{Proposition}{Proposition}[section]
\newtheorem{Remark}{Remark}[section]
\newtheorem{Corollary}{Corollary}[section]

\section{Introduction}

Bose-Einstein condensation (BEC) of an ideal gas, predicted by Einstein and Bose  in
1924 \cite{Bose1924,Einstein1925}, is the paradigm of quantum
statistical phase transitions. This phase transition is manifested by an abrupt growth in the population of the
ground state of the potential confining a gas of bosons. Such phenomenon  was  realized experimentally  in 1995 \cite{Anderson1995,Davis1995},
it opened a new field in the study of macroscopic quantum
phenomena, including  superconductivity \cite{Barden1957} and macroscopic quantum effects \cite{Dalfovo1999}, and  etc. The dynamics of the condensate wave function $\psi$ for a gas is
well described by the  Gross-Pitaevskii equation \cite{Gross1961,Pitaevskii2003}. In scaled units, the  Gross-Pitaevskii equation is given by
\begin{equation}\label{eq:1.1}
i\psi_t(x,t)=-\Delta\psi(x,t)+V(x)\psi(x,t)-\kappa |\psi|^2\psi(x,t),\ \  (x,t)\in \mathbb{R}^n\times\mathbb{R},
\end{equation}
 where  $V$ is an external trapping potential.  The constant $\kappa\in\mathbb{R}\setminus\{0\}$ is the strength of interaction  between particles.
 The forces between the particles in the condensates is attractive if $\kappa>0$, and is repulsive if $\kappa<0$.

The dilute Bose gas in the low-temperature
limit becomes almost completely condensed,  the number of particles $N$ in the
 condensate is
approximately equal to the total number of particles. Hence, the condensate wave function $\psi$ can be normalized to the
number of condensed particles,
\begin{equation}\label{eq:1.2}
\int_{\mathbb{R}^{n}}|\psi|^2\,dx=N.
\end{equation}
When  the  interaction is attractive, the  condensate  collapses if the particle number exceeds a critical value; see \cite{Dalfovo1999, Huepe1999} and etc.
Mathematically, it amounts to prove the existence and nonexistence of  the  solution  in the form $\psi(x,t)=e^{-i\mu_s t}u(x)$ of the Gross-Pitaevskii equation \eqref{eq:1.1} constrained by \eqref{eq:1.2}.  The function $u$ then satisfies
\begin{equation}\label{eq:1.3}
-\Delta u+V(x)u=\kappa |u|^2u+\mu_{{}_N} u, \ \  x\in \mathbb{R}^n
\end{equation}
and
\[
\int_{\mathbb{R}^{{}^n}}|u|^2\,dx=N.
\]
Such a solution $u$ is referred to the  normalized solution.

For the attractive BEC, that is, the strength of interaction $\kappa>0$,
the Gross-Pitaevskii (GP) functional of  \eqref{eq:1.3} in two dimensions
\begin{equation*}
J_{2,\kappa}(u)=\frac{1}{2}\int_{\mathbb{R}^2}\big(|\nabla u(x)|^2+V(x)|u(x)|^2\big)\,dx-\frac{\kappa }{4}\int_{\mathbb{R}^2}|u(x)|^4\,dx
\end{equation*}
is well defined in the space
\[
\mathcal{H}:=\Big\{u\in H^1(\mathbb{R}^2):\int_{\mathbb{R}^2}V(x)|u(x)|^2\,dx<\infty\Big\}.
\]
The functional $J_{2,\kappa}$ is critical  on
\[
S_0(N)=\{u\in \mathcal{H}:\int_{\mathbb{R}^2}|u|^2\,dx=N\}.
\]
This can be seen by considering the GP equation
 \begin{equation}\label{eq:1.4}
-\Delta u+V(x)u=\kappa |u|^pu+\mu_s u, \ \  x\in \mathbb{R}^n
\end{equation}
for $p>0$. If $V(x)=0$, we may verify by taking $u_\lambda(x)=\lambda^{\frac n2}u(\lambda x)$ that the functional
\[
J_{p,\kappa}(u_\lambda)=\frac{1}{2}\lambda^2\int_{\mathbb{R}^n}|\nabla u(x)|^2\,dx-\frac{\kappa }{p+2}\lambda^{\frac {np}2}\int_{\mathbb{R}^n}|u(x)|^{p+2}\,dx
\]
is bounded from below on $S_0(N)$ if $0<p<\frac4n$, or $p=\frac4n$ and $N$ is small enough. It is  unbound from below on $S_0(N)$ if $p>\frac4n$. These three cases are usually known as  the mass subcritical, critical  and  supercritical, respectively.

Normalized solutions of \eqref{eq:1.3} for $n=2$ can be obtained by investigating the constrained minimization problem
\begin{equation}\label{eq:1.5}
e_N:=\inf_{u\in S_0(N)}J_{2,\kappa}(u).
\end{equation}
It was proved in \cite{Guo2014} that for the trapping potential $V$,  including the harmonic potential, and $N=1$, there exists a critical value $\kappa^*>0$ such that problem \eqref{eq:1.3} admits a minimizer if $0<\kappa<\kappa^*$,  and there exists no minimizer  if $\kappa\geq \kappa^*$. The threshold value $\kappa^*$ is determined by the unique, up to translation, positive solution $Q$ of the nonlinear scalar field equation
\begin{equation}\label{eq:1.6}
-\Delta u+u=|u|^2u\ \ {\rm in} \ \ \mathbb{R}^2,\ \ u\in H^1(\mathbb{R}^2).
\end{equation}
Let $u=\sqrt{N}v$. Then $v\in S_0(1)$ and
\begin{equation*}
J_{2,\kappa}(u)=NJ_{2,\kappa N}(v)=N\Big[\frac{1}{2}\int_{\mathbb{R}^2}\big(|\nabla v(x)|^2+V(x)|v(x)|^2\big)\,dx-\frac{\kappa N }{4}\int_{\mathbb{R}^2}|u(x)|^4\,dx\Big].
\end{equation*}
Hence, for  $N>0$ and the trapping potential $V(x)$, there exists a critical value
\begin{equation}\label{eq:1.8}
N^*:=\|Q\|_{L^2(\mathbb{R}^2)}^2/\kappa>0
\end{equation}
 such that problem \eqref{eq:1.3} admits a minimizer if $0<N<N^*$,  and there exists no minimizer  if $N\geq N^*$.

Such a result was extended to various problems in BEC, see for example \cite{Meng-Zeng2022, Wang-Yang2018, WZZ2022, ZZ2019}.

In general, if  $0<p<2$, $J_{p,\kappa}(u)$ admits  at least a minimizer for all $N>0$, see \cite{Lions1984,Lions1984-2}. Furthermore,   qualitative properties of the minimizers of $J_{p,\kappa}(u)$, such as the uniqueness for $N$ small enough, the concentration as
$N\to+\infty$, were investigated  in
\cite{ Maeda2010} and references therein. For any fixed $N>0$, the limiting behavior of the minimizers of $J_{p,\kappa}(u)$ as $p\to 2_-$ was discussed in \cite{Guo-Zeng-Zhou2014, Z2017}.

In the mass supercritical case $p>2$,  since the GP functional is not bounded below, normalized solutions can not be found by studying minimization type problems for GP functionals. One then turns to look for critical points of GP functionals. Observing in this case that GP functionals have the mountain pass structure on the constraint, Jeanjean \cite{Jeanjean1997} developed a method to find  normalized solutions for $V=0$. Inspired by \cite{Jeanjean1997}, we  considered in \cite{Yang2020} mass supercritical problems with nontrivial potentials. For $V(x)=|x|^2$, the harmonic potential, and $n=2$, we proved in \cite{Yang2020} that equation  \eqref{eq:1.4} admits a local minimal  and a mountain pass solutions if $0<N<N^*$ and $p>2$ close to $2$.  In  \cite{Bellazzini2017}, Bellazzini et al investigated local minimal solutions of \eqref{eq:1.4} with partial confinement. Recently,  the mountain pass type normalized solutions of mass supercritical Schr\"{o}dinger equations  were studied under more general potentials, see for example \cite {Bartsch2021,Ding2022} and references therein.

In this paper, we consider the following fractional Gross-Pitaevskii equation
\begin{equation}\label{eq:1.8}
i\psi_t(x,t)=(-\Delta)^{\alpha/2}\psi(x,t)+V(x)\psi(x,t)-\kappa |\psi|^p\psi(x,t),\ \  (x,t)\in \mathbb{R}^n\times\mathbb{R}.
\end{equation}
It is known that the condensation of weakly interacting bosons moving by  Gaussian paths was  usually  described by the classical Gross-Pitaevskii equation \eqref{eq:1.1}. In recent years,  Bose-Einstein condensates not  obeying the Gaussian distribution law has been found \cite{Ertik2012}. The  motion of bosons in the  non-Gaussian Bose-Einstein condensates is realized by L\'{e}vy flights, instead of Gaussian path. In order to describe this new phenomenon,  equation \eqref{eq:1.8} was introduced in \cite{Sakaguchi2022, Uzar2013}. In \eqref{eq:1.8},{ $\alpha\in(0,2]$ is the  L\'{e}vy index $\alpha$}, which measures self-similarity in the L\'{e}vy fractal path. The operator $(-\Delta)^{\alpha/2}$ is defined by the Fourier transform as
\[
(-\Delta)^{\alpha/2}u = \mathcal{F}^{-1}(|\xi|^{\alpha}\mathcal{F}u),
\]
where $\mathcal{F}u$ denotes the Fourier transform of $u$.  In the sequel, we denote $s=\alpha/2$ for simplicity.

If $\psi(x,t)=e^{-i\mu_s t}u(x)$ is a standing wave solution of equation \eqref{eq:1.8}, then $u$ is  a solution of  the
 fractional Gross-Pitaevskii equation
\begin{equation}\label{eq:1.10}
(-\Delta)^su+V(x)u=\kappa |u|^pu+\mu_s u,\ \ {\rm in} \ \ \mathbb{R}^n,
\end{equation}
where $0<s<1$. Equation \eqref{eq:1.10} can be classified as before by the growth of nonlinearity, that is, if $0<p<\frac {4s}{n}$, $p=\frac {4s}{n}$ or $p>\frac {4s}{n}$,  equation \eqref{eq:1.10} is referred to be mass subcritical, critical and supercritical, respectively.
Recently, the research on problem \eqref{eq:1.10}  has been drawn much attention by taking $\kappa>0$  as the parameter. For the case $V=0$, the existence of solutions to equation \eqref{eq:1.10} and related problems were considered in \cite{Appolloni2021, Zou2022, Luo2020, Zhen2022} etc for different growths of nonlinearities; while for $V \not\equiv0$, one has to take into account the effect of the potential $V$ besides the nonlinearity.
Take into account these two factors, rich results in the research of problem \eqref{eq:1.10} emerged, see  for instance \cite{Du2019, He-Long2016, Peng2021, Zuo2023} and references therein.

In the study of Bose-Einstein condensates,   the  attractive fractional Gross-Pitaevskii equation with  the harmonic potential in two dimensions
 \begin{equation}\label{eq:1.11}
i\psi_t(x,t)=(-\Delta)^s\psi(x,t)+|x|^2\psi(x,t)-|\psi|^2\psi(x,t),\ \  (x,t)\in \mathbb{R}^2\times\mathbb{R}
\end{equation}
is of particular interest.

In this paper, we focus on problem \eqref{eq:1.11}. Correspondingly, $u$ satisfies
\begin{equation}\label{eq:1.12}
(-\Delta)^su+|x|^2u= |u|^2u+\mu_s u.
\end{equation}
For problems with general potentials $V(x)$, we discuss in the last section.

We remark that, in above mentioned works, the parameter $\kappa$ in \eqref{eq:1.10} plays an important role. The existence and collapse of solutions of \eqref{eq:1.10} may be determined by the range of $\kappa$. Once $\kappa$ is fixed, say $\kappa=1$ as in \eqref{eq:1.12}, many researches on \eqref{eq:1.12} may run into dilemma, usual powerful methods may not be applicable.

We use in this paper $s$ as the parameter. Physically, in a solid state physics realization of fractional quantum
mechanics, Stickler \cite{S2013} studied the limit of small wave numbers as the L\'{e}vy parameter
of order $\alpha=2s\to 2$. Mathematically, it proved by
Proposition 4.4 in \cite{Nezza2012} that  for any $u\in C^\infty_c(\mathbb{R}^n)$, the following statements hold:

$(i)$ $ \lim_{s\to0+}(-\Delta)^su = u;$

$(ii)$  $\lim_{s\to1_-}(-\Delta)^su =-\Delta u$.

On the other hand, we know from \cite{Frank-Lenzmann2016} that, if $\frac12<s<1$, there exists  a unique positive radial ground state solution $Q_s$ of the fractional scalar field equation
\begin{equation}\label{eq:1.13}
(-\Delta)^su+u=|u|^2u\ \ {\rm in} \ \ \mathbb{R}^2,\ \ u\in H^s(\mathbb{R}^2)
\end{equation}
up to translation. Moreover, if $s\to 1_-$, $Q_s$  converges in $L^2(\mathbb{R}^2)\cap L^4(\mathbb{R}^2)$ to the positive radial ground state solution $Q$
of  the equation
\begin{equation}\label{eq:1.14}
-\Delta u+u=|u|^2u\ \ {\rm in} \ \ \mathbb{R}^2,\ \ u\in H^1(\mathbb{R}^2).
\end{equation}
 Particularly, we have
\begin{equation}\label{eq:1.15}
\|Q_s\|_{L^2(\mathbb{R}^2)}\to \|Q\|_{L^2(\mathbb{R}^2)}
\end{equation}
as $s\to 1_-$.

These works enlighten us to explore a way to investigate problem \eqref{eq:1.12}. It seems that no references in this direction appeared in literature for nonlinear fractional Gross-Pitaevskii equations with potentials.

Now, let us define the associated energy functional
\begin{equation}\label{eq:1.16}
E_{N,s}(u)=\frac{1}{2}\int_{\mathbb{R}^2}\big(|(-\Delta)^{\frac s2} u(x)|^2+|x|^2|u(x)|^2\big)\,dx-\frac{1 }{4}\int_{\mathbb{R}^2}|u(x)|^4\,dx.
\end{equation}
of equation \eqref{eq:1.12}, which is well defined in the space
\[
\mathcal{H}_s:=\Big\{u\in H^s(\mathbb{R}^2):\int_{\mathbb{R}^2}|x|^2|u(x)|^2\,dx<\infty\Big\}.
\]

The cubic nonlinearity appeared in \eqref{eq:1.12} stems from usual physical consideration, such as the interaction of particles, or the Kerr effect. This exerts an influence on the GP functional $E_{N,s}(u)$  constrained on
\[
S(N)=\{u\in\mathcal{H}_s:\int_{\mathbb{R}^2}|u|^2\,dx=N\}.
\]
Indeed, $E_{N,s}(u)$ is unbounded from below on $S(N)$ for $0<s<1$.  This can be seen by letting $u_\lambda=\lambda\varphi(\lambda x)$ with $\varphi\in S(N)$, then $u_\lambda\in S(N)$  and
\[
E_{N,s}(u_\lambda)=\frac{1}{2}\lambda^{2s}\int_{\mathbb{R}^2}\big(|(-\Delta)^{\frac s2} u|^2+\lambda^{-2s-2}|x|^2|u|^2\big)\,dx-\frac{1}{4}\lambda^{2}\int_{\mathbb{R}^2}|u|^4\,dx
\]
approaches to $-\infty$, as $\lambda$ tends to $+\infty$.

Since $E_{N,s}(u)$ has no minimizer on $S(N)$, we turn to look for critical points of $E_{N,s}(u)$.
A critical point $u\in \mathcal{H}_s$ of $E_{N,s}(u)$ in fact is a weak solutions of \eqref{eq:1.12}, namely, $u$  satisfies
\begin{equation}\label{eq:1.17}
\int_{\mathbb{R}^2}(-\Delta)^{\frac{s}2}u(-\Delta)^{\frac s2}\varphi\,dx+\int_{\mathbb{R}^2}|x|^2u\varphi\,dx
-\int_{\mathbb{R}^2}|u|^2u\varphi\,dx-\mu_s\int_{\mathbb{R}^2}u\varphi\,dx=0
\end{equation}
for any $\varphi\in C_c^\infty(\mathbb{R}^2)$, where
\begin{equation}\label{eq:1.18}
\mu_N=\frac{\langle E'_{N,s}(u),u \rangle}{\int_{\mathbb{R}^2}|u|^2\,dx}.
\end{equation}

Motivated by \cite{Bellazzini2016,Yang2020}, we primarily  try to find  local minimizers of $E_{N,s}(u)$ on $S(N)$.
Denote
$$
A_{k}=\{u\in S(N)|\int_{\mathbb{R}^2}|(-\Delta)^{\frac s2}u|^2\,dx\leq k\}.
$$
It seems that not for any $k>0$, the minimization problem $\inf_{u\in A_{k}}E_{N,s}(u)$ is achieved inside $A_{k}$.
After carefully choosing
\begin{equation}\label{eq:1.19}
k =t_s=\frac{N}{2s-1}\Big(\frac{N_s^*}{N}\Big)^{\frac{s}{1-s}},
\end{equation}
where
\begin{equation}\label{eq:1.20}
N_s^*=\int_{\mathbb{R}^2}|Q_s|^2\,dx
\end{equation}
and $Q_s$ is the ground state solution of\eqref{eq:1.13}, we show that
the minimization problem
\begin{equation}\label{eq:1.21}
e_N(s):=\inf_{u\in A_{t_s}}E_{N,s}(u).
\end{equation}
is achieved inside $A_{t_s}$ provide that $N<N^*=\|Q\|_{L^2(\mathbb{R}^2)}^2$ and  $s$ close to $1$.

\bigskip

\begin{Theorem}\label{thm:1}

Suppose $0<N<N^*$. There exists $\varepsilon_N>0$  such that for any $s\in (1-\varepsilon_N,1)$, $E_{N,s}(u)$  admits a nonnegative   minimizer $u_s$ in $A_{t_s}$, that is
\begin{equation}\label{eq:1.22}
E_{N,s}(u_s)=\inf_{u\in A_{t_s}}E_{N,s}(u)\quad ,\quad  E_{N,s}(u_s)<\inf_{u\in \partial A_{t_s}}E_{N,s}(u),
\end{equation}
and $u_s$ is a weak solution of  \eqref{eq:1.12}  with $\mu_s=N^{-1}\langle E'_{N,s}(u_s),u_s \rangle$.
\end{Theorem}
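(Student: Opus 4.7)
The plan is to obtain $u_s$ as the minimizer of \eqref{eq:1.21} and then use the strict inequality in \eqref{eq:1.22} to conclude that $u_s$ lies in the interior of $A_{t_s}$ within $S(N)$; Lagrange multipliers then turn $u_s$ into a weak solution of \eqref{eq:1.12}. The starting tool is the sharp fractional Gagliardo--Nirenberg inequality associated with the ground state $Q_s$ of \eqref{eq:1.13}, which for $u\in S(N)$ bounds $\int_{\mathbb{R}^2}|u|^4\,dx$ above by a quantity of the form $C_s\,N^{2-1/s}\|(-\Delta)^{s/2}u\|_2^{2/s}$, with $C_s$ explicit in terms of $N_s^*$ and $s$. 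Writing $y=\|(-\Delta)^{s/2}u\|_2^2$, the one-variable function $f(y)=y/2-(C_s/4)N^{2-1/s}y^{1/s}$ is strictly increasing on $[0,t_s]$ and satisfies $f'(t_s)=0$: this is precisely how the threshold \eqref{eq:1.19} is calibrated. A short computation using $f'(t_s)=0$ gives $f(t_s)=(1-s)t_s/2$, and since $|x|^2|u|^2\ge 0$,
\[
\inf_{u\in\partial A_{t_s}} E_{N,s}(u) \;\ge\; \frac{(1-s)t_s}{2} \;=\; \frac{N(1-s)}{2(2s-1)}\left(\frac{N_s^*}{N}\right)^{\!s/(1-s)}.
\]

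The hard part is producing a test function in $A_{t_s}$ whose energy lies strictly below this lower bound. I would fix any $\varphi\in C_c^\infty(\mathbb{R}^2)$ with $\|\varphi\|_2^2=N$; then $E_{N,s}(\varphi)$ and $\|(-\Delta)^{s/2}\varphi\|_2^2$ remain uniformly bounded as $s\to 1_-$, the latter converging to $\|\nabla\varphi\|_2^2$. Because $N<N^*$ and $N_s^*\to N^*$ by \eqref{eq:1.15}, the ratio $N_s^*/N$ is bounded below by some $\rho_0>1$ for $s$ close to $1$, so $(N_s^*/N)^{s/(1-s)}$ blows up exponentially in $1/(1-s)$ while $(1-s)$ only vanishes linearly; hence $(1-s)t_s/2\to+\infty$. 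For $s$ sufficiently close to $1$ we therefore have $\varphi\in A_{t_s}$ and $E_{N,s}(\varphi) < \inf_{\partial A_{t_s}} E_{N,s}$, which yields $e_N(s)<\inf_{\partial A_{t_s}} E_{N,s}$. This quantitative balance between the divergent factor $(N_s^*/N)^{s/(1-s)}$ and the vanishing factor $1-s$, together with the $s$-dependence of the sharp GN constant $C_s$, is where I expect the argument to require the most care.

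With the strict inequality in hand, existence follows from a standard weak-compactness argument. For a minimizing sequence $\{u_n\}\subset A_{t_s}$, the inequality $E_{N,s}(u_n)\ge \tfrac12\int|x|^2|u_n|^2\,dx - (C_s/4)N^{2-1/s}t_s^{1/s}$ bounds the potential energy, so $\{u_n\}$ is bounded in $\mathcal{H}_s$. The confining potential $|x|^2$ yields a compact embedding $\mathcal{H}_s\hookrightarrow L^q(\mathbb{R}^2)$ for $q\in[2,2/(1-s))$, hence in particular into $L^4$ for $s$ close to $1$; after extracting a subsequence, $u_n\rightharpoonup u_s$ in $H^s(\mathbb{R}^2)$, $u_n\to u_s$ in $L^2\cap L^4$, and $\|u_s\|_2^2=N$. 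Weak lower semicontinuity of $\|(-\Delta)^{s/2}\cdot\|_2^2$ and Fatou's lemma for the potential term give $u_s\in A_{t_s}$ and $E_{N,s}(u_s)\le\liminf E_{N,s}(u_n)=e_N(s)$, so $u_s$ is a minimizer; replacing $u_s$ by $|u_s|$, which leaves $\|\cdot\|_{L^2}$, $\|\cdot\|_{L^4}$ and the potential integral invariant and does not increase the fractional Gagliardo seminorm, we may assume $u_s\ge 0$. Finally, the strict inequality forces $\|(-\Delta)^{s/2}u_s\|_2^2<t_s$, so $u_s$ is an interior minimizer on $S(N)$; the Lagrange multiplier rule then gives $E'_{N,s}(u_s)=\mu_s\,u_s$ with $\mu_s=N^{-1}\langle E'_{N,s}(u_s),u_s\rangle$, i.e.\ the weak formulation of \eqref{eq:1.12}.
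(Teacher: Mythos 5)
Your proposal is correct and follows essentially the same route as the paper: the sharp fractional Gagliardo--Nirenberg inequality calibrates $t_s$ so that $\inf_{\partial A_{t_s}}E_{N,s}\ge\tfrac12(1-s)t_s$, a fixed test function $\varphi\in C_c^\infty(\mathbb{R}^2)$ with $\|\varphi\|_2^2=N$ has $s$-uniformly bounded energy and eventually lies in $A_{t_s}$ because $(N_s^*/N)^{s/(1-s)}\to\infty$ for $N<N^*$, and the compact embedding $\mathcal{H}_s\hookrightarrow L^2\cap L^4$ together with weak lower semicontinuity yields an interior minimizer, made nonnegative via the rearrangement-type inequality for the fractional seminorm and turned into a weak solution by the Lagrange multiplier rule. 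The only cosmetic difference is that the paper additionally upgrades the minimizing sequence to strong $\mathcal{H}_s$ convergence, which is not needed for the statement itself.
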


We remark that for a fixed $s\in (\frac 12,1)$, in the same way we can also prove that  the results in Theorem \ref{thm:1}  hold if $N$ is small enough.

If $u\in S(N)$ is a local minimizer of $E_{N,s}(u)$, the inequality $E_{N,s}(u)\geq E_{N,s}(|u|)$ implies that $|u|$ is also a local minimizer. Hence, by Theorem \ref{thm:1}, $E_{N,s}$ has a nonnegative local minimizer.

We note that $t_s\to +\infty$ if $0<N<N^*$  and $t_s\to 0$ if $N>N^*$ whenever $s\to 1_-$. Based on this fact,  we  may show that the local minimizer $u_s\in A_{t_s}$ of $E_{N,s}(u)$ is in fact a ground state, and  for $N>N^*$, $E_{N,s}(u)$  has no local minimizers in $A_{t_s}$ if $s<1$ and $s$ close to 1.  By a ground state $u$ of $E_{N,s}$ on $S(N)$, we mean that $u$ is a critical point of  $E_{N,s}(u)$ on $S(N)$ with the smallest energy among all critical points of  $E_{N,s}(u)$ on $S(N)$.

Let $u_s$   be a nonnegative local minimizer of $E_{N,s}(u)$ inside $A_{t_s}$. We show in the following theorem, among other things,  that any nonnegative local minimizer of $E_{N,s}(u)$ inside $A_{t_s}$ must be radially symmetric.
\begin{Theorem}\label{thm:2}
Suppose $0<N<N^*$. There exists $\varepsilon_N>0$  such that for  $s\in (1-\varepsilon_N,1)$, we have that

$(i)$ $u_s$ is radially symmetric and decreasing with respect to $|x|$;

$(ii)$ $u_s$ is  a ground state of $E_{N,s}(u)$ on $S(N)$;

$(iii)$ The following uniform bounds
\begin{equation}\label{eq:1.23}
\int_{\mathbb{R}^2}|(-\Delta)^{\frac s2}u_s|^2\,dx\leq C
\end{equation}
and
\begin{equation}\label{eq:1.24}
\int_{\mathbb{R}^2}|x|^2|u_s|^2\,dx\leq \frac C2
\end{equation}
hold, where $C>0$ is independent of $s$.
\end{Theorem}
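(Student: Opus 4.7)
The plan is to prove (i), (iii), (ii) in this order, since the ground--state argument (ii) will rely on the uniform bounds (iii). For (i), I would use Schwarz symmetric decreasing rearrangement. Let $u_s^{*}$ denote the symmetric decreasing rearrangement of the nonnegative $u_s$ (whose existence comes from Theorem \ref{thm:1}). The $L^{2}$ and $L^{4}$ norms are preserved, and the fractional P\'olya--Szeg\H{o} inequality gives $\|(-\Delta)^{s/2}u_s^{*}\|_{L^{2}}\le\|(-\Delta)^{s/2}u_s\|_{L^{2}}$, so $u_s^{*}\in A_{t_s}\cap S(N)$. Since $|x|^{2}$ is strictly radially increasing, writing $|x|^{2}=M-(M-|x|^{2})^{+}$ and applying Hardy--Littlewood to the symmetric decreasing function $(M-|x|^{2})^{+}$, then sending $M\to\infty$, yields $\int_{\mathbb{R}^{2}}|x|^{2}|u_s^{*}|^{2}\,dx\le\int_{\mathbb{R}^{2}}|x|^{2}|u_s|^{2}\,dx$. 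Hence $E_{N,s}(u_s^{*})\le E_{N,s}(u_s)$, so by minimality equality holds throughout, and strictness of the potential rearrangement forces $u_s=u_s^{*}$.

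For (iii), fix a smooth test function $\phi\in C_c^{\infty}(\mathbb{R}^{2})$ with $\|\phi\|_{L^{2}}^{2}=N$. Its fractional kinetic energy $\|(-\Delta)^{s/2}\phi\|_{L^{2}}^{2}$ stays uniformly bounded as $s\to 1_{-}$, and $t_s\to\infty$ by \eqref{eq:1.15} and \eqref{eq:1.19} together with $N<N^{*}$, so $\phi\in A_{t_s}$ for $s$ close to $1$ and thus $E_{N,s}(u_s)\le E_{N,s}(\phi)\le C_{1}$ uniformly. Multiplying \eqref{eq:1.12} by $x\cdot\nabla u_s$ and integrating, using the nonlocal dilation identity $\int_{\mathbb{R}^{2}}(x\cdot\nabla u)(-\Delta)^{s}u\,dx=(s-1)\|(-\Delta)^{s/2}u\|_{L^{2}}^{2}$ valid in dimension two, together with the test of \eqref{eq:1.12} against $u_s$ itself, and eliminating $\mu_s N$, one obtains the Pohozaev--type identity
\begin{equation}\label{eq:planPoho}
E_{N,s}(u_s)=\frac{1-s}{2}\|(-\Delta)^{s/2}u_s\|_{L^{2}}^{2}+\int_{\mathbb{R}^{2}}|x|^{2}|u_s|^{2}\,dx.
\end{equation}
Nonnegativity of both summands immediately yields \eqref{eq:1.24}. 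For \eqref{eq:1.23}, substitute the sharp fractional Gagliardo--Nirenberg inequality
\[
\|u\|_{L^{4}}^{4}\le\frac{2s(2s-1)^{(1-s)/s}}{N_s^{*}}\|(-\Delta)^{s/2}u\|_{L^{2}}^{2/s}\|u\|_{L^{2}}^{4-2/s}
\]
(whose optimizer is $Q_s$) into the standard energy identity $E_{N,s}(u_s)=\tfrac{1}{2}K_s+\tfrac{1}{2}P_s-\tfrac{1}{4}\|u_s\|_{L^{4}}^{4}$, with $K_s=\|(-\Delta)^{s/2}u_s\|_{L^{2}}^{2}$ and $P_s=\int|x|^{2}u_s^{2}$. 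Since the sharp constant converges to $2/N^{*}$ as $s\to 1_{-}$ by \eqref{eq:1.15}, this produces an inequality of the shape $\tfrac{1}{2}K_s(1-N/N^{*})+o_{s\to 1}(K_s)+\tfrac{1}{2}P_s\le C_{1}$; since $N<N^{*}$ this yields $K_s\le C_{2}$ uniformly.

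For (ii), let $w\in S(N)$ be any critical point of $E_{N,s}$ with $E_{N,s}(w)\le E_{N,s}(u_s)$. Applying the Pohozaev identity to $w$ gives $E_{N,s}(w)=\tfrac{1-s}{2}\|(-\Delta)^{s/2}w\|_{L^{2}}^{2}+\int|x|^{2}w^{2}$, so
\[
\|(-\Delta)^{s/2}w\|_{L^{2}}^{2}\le\frac{2\,E_{N,s}(w)}{1-s}\le\frac{2C_{1}}{1-s}.
\]
By \eqref{eq:1.19},
\[
\log t_s=-\log(2s-1)+\log N+\frac{s}{1-s}\log\frac{N_s^{*}}{N}\sim\frac{\log(N^{*}/N)}{1-s}\quad\text{as }s\to 1_{-},
\]
so $t_s$ grows superexponentially in $(1-s)^{-1}$ while $2C_{1}/(1-s)$ is only linear. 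For $s$ sufficiently close to $1$ we therefore have $\|(-\Delta)^{s/2}w\|_{L^{2}}^{2}\le t_s$, i.e.\ $w\in A_{t_s}$, whence $E_{N,s}(w)\ge e_N(s)=E_{N,s}(u_s)$. Equality throughout shows that $u_s$ attains the smallest energy among all critical points on $S(N)$, i.e.\ is a ground state.

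The main obstacle I expect is rigorously establishing the Pohozaev identity \eqref{eq:planPoho}: testing the nonlocal equation against $x\cdot\nabla u_s$ requires sufficient regularity and decay of weak solutions of \eqref{eq:1.12} with harmonic potential, so a separate elliptic regularity argument must be performed before the formal computation is legal. A secondary delicate point is tracking the $s$-dependent constants in the sharp fractional Gagliardo--Nirenberg inequality so that the positive coefficient $(1-N/N^{*})$ survives cleanly in the limit $s\to 1_{-}$ to close the uniform $K_s$-estimate.
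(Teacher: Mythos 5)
Your parts (i) and (ii) follow the paper's proof almost verbatim: (i) is the Schwarz rearrangement argument with the P\'olya--Szeg\H{o} inequality, equality of $L^p$ norms, the rearrangement inequality for the radially increasing potential (your truncation $|x|^2=M-(M-|x|^2)^{+}$ plus Hardy--Littlewood is exactly a proof of the paper's Lemma \ref{lem:2.8}, which the paper simply cites), and the strict equality case; (ii) is exactly the paper's argument, namely the Pohozaev identity in the form $E_{N,s}(w)=\tfrac{1-s}{2}\|(-\Delta)^{s/2}w\|_2^2+\int|x|^2w^2$, the bound $\|(-\Delta)^{s/2}w\|_2^2\le 2C_1/(1-s)$, and the observation that $t_s$ grows exponentially in $(1-s)^{-1}$ so that $w\in A_{t_s}$. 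The bound \eqref{eq:1.24} is likewise obtained exactly as in the paper. Your worry about justifying the Pohozaev identity is handled in the paper by citing Secchi's result (Lemma \ref{lem:2.1}), so no new regularity work is needed.

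The one place you genuinely diverge is the kinetic bound \eqref{eq:1.23}, and there your argument as written has a gap. Substituting the sharp Gagliardo--Nirenberg inequality gives
\[
E_{N,s}(u_s)\;\ge\;\frac12 K_s\Bigl[1-\frac{sN}{N_s^{*}}\Bigl(\frac{K_s}{(2s-1)N}\Bigr)^{\frac{1-s}{s}}\Bigr]+\frac12 P_s,
\]
and the claimed form $\tfrac12 K_s(1-N/N^{*})+o_{s\to1}(K_s)$ requires $K_s^{(1-s)/s}\to 1$, which is \emph{not} automatic: a priori $K_s$ may be as large as $t_s\sim(N_s^{*}/N)^{s/(1-s)}$, in which case $K_s^{(1-s)/s}\to N^{*}/N$ and the bracket degenerates to $0$ --- this is precisely why the energy barrier sits at $\partial A_{t_s}$ and not before. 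The gap is easily patched with an ingredient you already have but did not invoke at this point: your identity \eqref{eq:planPoho} gives the intermediate bound $K_s\le 2C_1/(1-s)$, whence $K_s^{(1-s)/s}\le(2C_1/(1-s))^{(1-s)/s}\to1$, and then your Gagliardo--Nirenberg computation closes with the positive margin $1-N/N^{*}$. (Equivalently, one can note that the paper's auxiliary function $f(t)=t-\lambda_sN^{2-1/s}t^{1/s}/N_s^{*}$ is increasing on $[0,t_s]$ and converges to $t(1-N/N^{*})$ on compacts.) The paper instead proves \eqref{eq:1.23} by a blow-up contradiction argument: assuming $K_{s_k}\to\infty$, it rescales $u_{s_k}$, uses $K_s\le C/(1-s)$ to show the scaling factor $\eta_k^{2s_k-2}\to1$, passes to a very weak limit solving $-\Delta w+N^{-1}w=w^3$, and derives $N^{*}\le N$, contradicting $N<N^{*}$. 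Your direct route, once patched as above, is shorter; the paper's route avoids tracking the sharp constant but requires the very-weak-solution limiting machinery. Either way the theorem follows.
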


\bigskip

\begin{Theorem}\label{thm:3}
If $N>N^*$, there exists $\varepsilon^1_{N}>0$ such that for any $s\in (1-\varepsilon^1_{N},1)$, $E_{N,s}(u)$  admits no local positive minimizer in $A_{t_s}$.
\end{Theorem}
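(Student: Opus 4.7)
The plan is to argue by contradiction. Suppose, for some sequence $s_n \to 1_-$, there exists a positive local minimizer $u_n := u_{s_n}$ of $E_{N,s_n}$ in $A_{t_{s_n}}$, in the sense of Theorem \ref{thm:1}; in particular, $u_n$ lies strictly inside $A_{t_{s_n}}$, so it is a free critical point of $E_{N,s_n}$ on the manifold $S(N)$, hence a weak solution of \eqref{eq:1.12} for some Lagrange multiplier. Three ingredients will combine to a contradiction: a Pohozaev/virial identity, the vanishing of $t_{s_n}$ under the hypothesis $N>N^*$, and an elementary concentration--spread inequality.

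First I would derive the Pohozaev identity for $u_n$. The $L^2$-preserving scaling $v_\lambda(x):=\lambda v(\lambda x)$ sends $S(N)$ to itself, so criticality of $u_n$ on $S(N)$ forces $\frac{d}{d\lambda}E_{N,s_n}(u_{n,\lambda})|_{\lambda=1}=0$. Using the scaling identities $\|(-\Delta)^{s/2}v_\lambda\|_2^2=\lambda^{2s}\|(-\Delta)^{s/2}v\|_2^2$, $\int|x|^2 v_\lambda^2\,dx=\lambda^{-2}\int|x|^2v^2\,dx$ and $\int v_\lambda^4\,dx=\lambda^2\int v^4\,dx$, and writing $A_n:=\|(-\Delta)^{s_n/2}u_n\|_2^2$, $B_n:=\int|x|^2 u_n^2\,dx$, $D_n:=\int u_n^4\,dx$, this reduces to $s_n A_n-B_n-\tfrac12 D_n=0$. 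In particular $0\le B_n\le s_nA_n$ and $0\le D_n\le 2s_nA_n$. Next, $N>N^*$ together with $N_s^*\to N^*$ as $s\to 1_-$ (see \eqref{eq:1.15}) gives $(N_s^*/N)^{s/(1-s)}\to 0$ in the formula \eqref{eq:1.19}, so $t_{s_n}\to 0$. Combined with $A_n\le t_{s_n}$, we conclude $A_n,B_n,D_n\to 0$ simultaneously.

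The final step exploits that $B_n$ and $D_n$ cannot both vanish on $S(N)$. I would prove the elementary lemma: for every $u\in L^2(\mathbb{R}^2)$ with $\|u\|_2^2=N$ and finite $B:=\int|x|^2u^2\,dx$, the bound $BD\ge N^3/(8\pi)$ holds, where $D:=\int u^4\,dx$. Indeed, Chebyshev's inequality yields $\int_{|x|>R}u^2\,dx\le B/R^2$, so the choice $R^2=2B/N$ gives $\int_{|x|\le R}u^2\,dx\ge N/2$; Cauchy--Schwarz then implies
\[
\Big(\frac{N}{2}\Big)^2\le\Big(\int_{|x|\le R}u^2\,dx\Big)^2\le \pi R^2\int u^4\,dx=\frac{2\pi BD}{N},
\]
whence the claim. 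Applied to each $u_n$, this forces $B_nD_n\ge N^3/(8\pi)>0$, contradicting $B_n,D_n\to 0$.

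The main technical point is justifying the termwise differentiation in the virial identity, i.e.\ that each piece of $E_{N,s_n}(u_{n,\lambda})$ is $C^1$ in $\lambda$ at $\lambda=1$; this reduces to standard regularity/integrability of positive weak solutions of \eqref{eq:1.12} and the observation that $u_n\in\mathcal{H}_{s_n}$. Everything else—the limit $t_{s_n}\to 0$ and the concentration--spread bound—is elementary.
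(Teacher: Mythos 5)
Your proposal is correct, and it reaches the contradiction by a genuinely different (and more elementary) route than the paper. Both arguments start the same way: a positive minimizer interior to $A_{t_s}$ is a constrained critical point, the Pohozaev/virial identity $s\int|(-\Delta)^{\frac s2}u_s|^2\,dx=\int|x|^2|u_s|^2\,dx+\frac12\int|u_s|^4\,dx$ holds (the paper gets it from Lemma \ref{lem:2.1}, you get it by differentiating the explicit one-variable function $\lambda\mapsto E_{N,s}(u_{s,\lambda})$, which for an interior local minimizer is rigorous without any extra regularity), and $N>N^*$ forces $t_s\to 0$, hence $\int|x|^2|u_s|^2\,dx$ and $\int|u_s|^4\,dx$ both tend to $0$. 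The divergence is in how one rules this out on $S(N)$. The paper bounds the product $\big(\int|x|^{2s}|u_s|^2\big)\big(\int|(-\Delta)^{\frac s2}u_s|^2\big)$ from above via H\"older and $t_s$, and from below via the fractional Hardy inequality together with explicit asymptotics of $\Gamma\big(\tfrac{1-s}{2}\big)$, so that it must compare the decay rate of $t_s^{1+s}$ against $(1-s)^2$. You instead prove the $s$-independent uncertainty-type bound $\big(\int|x|^2u^2\big)\big(\int u^4\big)\ge N^3/(8\pi)$ for every $u\in S(N)$ (Chebyshev plus Cauchy--Schwarz on a ball of radius $R^2=2B/N$; note $B>0$ since $u\ne 0$), which contradicts $B_nD_n\to 0$ outright. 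This buys two things: no fractional Hardy inequality and no tracking of constants or rates in $s$ — only the qualitative statement $t_s\to 0$ is used — and the lemma is purely an $L^2$/$L^4$ statement, so the same argument transfers verbatim to the general-potential setting of Section 6 (where the paper gives yet a third, compactness-based proof in Theorem \ref{thm:6.3}). The one point worth stating explicitly in a write-up is that the local minimizer is assumed interior to $A_{t_s}$ (as in the paper's proof), since that is what makes $\lambda=1$ a two-sided local minimum along the scaling path and yields the virial identity as an equality rather than a one-sided inequality.
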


\bigskip

The functional $E_{N,s}(u)$ actually has a mountain pass geometry, so we may find the second critical point for $E_{N,s}(u)$ by the mountain pass lemma.

\begin{Theorem}\label{thm:4}
Let $0<N<N^*$. There exists $\varepsilon^2_N>0$ such that for  $s\in (1-\varepsilon^2_N,1)$,  $E_{N,s}(u)$  admits a  critical point  $v_s\in \mathcal{H}_s$ at the mountain pass level on $S(N)$.  The critical point $v_s$ is a weak solution of  \eqref{eq:1.12}  with $\mu_s=N^{-1}\langle E'_{N,s}(v_s),v_s \rangle$. Moreover, $v_s$ is radially decreasing with respect to $|x|$.
\end{Theorem}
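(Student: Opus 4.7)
\textbf{Proof plan for Theorem \ref{thm:4}.}

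\emph{Step 1: Mountain pass geometry.} Let $u_s$ denote the local minimizer produced by Theorem \ref{thm:1}. Consider the $L^2$-preserving dilation $u_s^\lambda(x):=\lambda u_s(\lambda x)\in S(N)$, for which
\[
E_{N,s}(u_s^\lambda) = \tfrac{\lambda^{2s}}{2}\int_{\mathbb{R}^2}|(-\Delta)^{s/2}u_s|^2\,dx + \tfrac{\lambda^{-2}}{2}\int_{\mathbb{R}^2}|x|^2|u_s|^2\,dx - \tfrac{\lambda^{2}}{4}\int_{\mathbb{R}^2}|u_s|^4\,dx \longrightarrow -\infty
\]
as $\lambda\to+\infty$, because $2s<2$. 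Choose $\lambda_0>1$ so large that $u_s^{\lambda_0}\notin A_{t_s}$ and $E_{N,s}(u_s^{\lambda_0})<E_{N,s}(u_s)$, and set
\[
\Gamma := \{\gamma\in C([0,1],S(N)):\ \gamma(0)=u_s,\ \gamma(1)=u_s^{\lambda_0}\},\qquad c(s) := \inf_{\gamma\in\Gamma}\max_{\tau\in[0,1]} E_{N,s}(\gamma(\tau)).
\]
Every $\gamma\in\Gamma$ must meet $\partial A_{t_s}$, so Theorem \ref{thm:1} gives $c(s)\ge \inf_{\partial A_{t_s}}E_{N,s} > E_{N,s}(u_s)$, the required mountain pass structure.

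\emph{Step 2: Bounded PS sequence via an auxiliary functional.} Adapting Jeanjean's scaling trick from \cite{Jeanjean1997} to the fractional setting, I introduce the $L^2$-preserving dilation $H_\tau u(x):=e^\tau u(e^\tau x)$ and the auxiliary functional $\widetilde{E}:S(N)\times\mathbb{R}\to\mathbb{R}$,
\[
\widetilde{E}(u,\tau) := E_{N,s}(H_\tau u) = \tfrac{e^{2s\tau}}{2}\int_{\mathbb{R}^2}|(-\Delta)^{s/2}u|^2\,dx + \tfrac{e^{-2\tau}}{2}\int_{\mathbb{R}^2}|x|^2|u|^2\,dx - \tfrac{e^{2\tau}}{4}\int_{\mathbb{R}^2}|u|^4\,dx.
\]
Lifting the admissible paths to $\widetilde{\gamma}(\tau)=(\gamma(\tau),0)$ in $S(N)\times\mathbb{R}$ shows the min-max values coincide, and Ekeland's principle combined with a deformation argument on $S(N)\times\mathbb{R}$ yields a sequence $(v_n,\tau_n)$ with $\widetilde{E}(v_n,\tau_n)\to c(s)$, $\tau_n\to 0$, $\partial_\tau\widetilde{E}(v_n,\tau_n)\to 0$, and $\|D_u\widetilde{E}(v_n,\tau_n)|_{T_{v_n}S(N)}\|\to 0$. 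Setting $w_n := H_{\tau_n}v_n\in S(N)$ produces a PS sequence for $E_{N,s}|_{S(N)}$ at level $c(s)$ that additionally satisfies the asymptotic Pohozaev identity
\[
P(w_n) := s\int_{\mathbb{R}^2}|(-\Delta)^{s/2}w_n|^2\,dx - \int_{\mathbb{R}^2}|x|^2|w_n|^2\,dx - \tfrac{1}{2}\int_{\mathbb{R}^2}|w_n|^4\,dx \longrightarrow 0.
\]

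\emph{Step 3: Boundedness, compactness, weak limit.} The combination
\[
2E_{N,s}(w_n) - P(w_n) = (1-s)\int_{\mathbb{R}^2}|(-\Delta)^{s/2}w_n|^2\,dx + 2\int_{\mathbb{R}^2}|x|^2|w_n|^2\,dx \longrightarrow 2c(s)
\]
bounds $\{w_n\}$ uniformly in $\mathcal{H}_s$ for every fixed $s\in(1-\varepsilon_N^2,1)$, and Gagliardo-Nirenberg then bounds $\|w_n\|_{L^4}$. Pass to a weak limit $w_n\rightharpoonup v_s$ in $\mathcal{H}_s$. The harmonic confinement yields the tightness estimate
\[
\int_{|x|>R}|w_n|^2\,dx \le R^{-2}\int_{\mathbb{R}^2}|x|^2|w_n|^2\,dx,
\]
which together with local compactness of $H^s\hookrightarrow L^4$ on bounded sets (valid since $s>1/2$ and $n=2$) and interpolation gives $w_n\to v_s$ strongly in $L^2\cap L^4(\mathbb{R}^2)$. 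Hence $v_s\in S(N)$; the multipliers $\mu_{s,n}:=N^{-1}\langle E'_{N,s}(w_n),w_n\rangle$ are bounded and converge to the asserted $\mu_s$; and $v_s$ is a weak solution of \eqref{eq:1.12} with $E_{N,s}(v_s)=c(s)>E_{N,s}(u_s)$, so $v_s\not\equiv 0$ and $v_s\not= u_s$. To secure radial symmetry (and monotonicity), I run the entire argument on the subspace of radial functions in $\mathcal{H}_s$: fractional P\'olya-Szeg\H{o}, Riesz rearrangement applied to $\int|x|^2|u|^2\,dx$, and invariance of $\|u\|_{L^4}$ under symmetric decreasing rearrangement ensure the radial mountain pass level coincides with $c(s)$, and the principle of symmetric criticality lifts any radial critical point back to a critical point of $E_{N,s}|_{S(N)}$; strict monotonicity then follows from the Frank-Seiringer strict rearrangement inequality applied to the limit $v_s$.

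\emph{Main obstacle.} The delicate step is Step 2: proving that the auxiliary min-max value over $S(N)\times\mathbb{R}$ coincides with $c(s)$ and extracting a PS sequence that additionally satisfies $P(w_n)\to 0$. Without this supplementary Pohozaev information, the energy identity alone cannot control $\int_{\mathbb{R}^2}|(-\Delta)^{s/2}w_n|^2\,dx$ in the mass-supercritical regime $p=2>4s/n$ (for $s<1$, $n=2$), and compactness at the mountain pass level would fail.
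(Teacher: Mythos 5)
Your proposal is correct, and its core --- Jeanjean's auxiliary functional $\widetilde E(u,\tau)=E_{N,s}(e^{\tau}u(e^{\tau}\cdot))$ on $S(N)\times\mathbb{R}$ producing a Palais--Smale sequence at the mountain-pass level that additionally satisfies $P(w_n)\to 0$, followed by the identity $2E_{N,s}(w_n)-P(w_n)=(1-s)\|(-\Delta)^{s/2}w_n\|_2^2+2\int_{\mathbb{R}^2}|x|^2|w_n|^2\,dx$ for boundedness and the trap-induced compactness (Lemma \ref{lem:2.7}) for strong $L^2\cap L^4$ convergence --- is exactly the paper's route in Lemma \ref{lem:4.2} and the proof of Theorem \ref{thm:4}; you also correctly single out Step 2 as the crux. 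Two points genuinely differ. First, your mountain-pass endpoints are the local minimizer $u_s$ and a dilation of it, whereas the paper takes dilations $\varphi_s^{\tau_1},\varphi_s^{\tau_2}$ of the normalized ground state $\varphi_s=\sqrt{N}\,\|Q_s\|_2^{-1}Q_s$ of the limiting scalar field equation; both choices yield the geometry for Theorem \ref{thm:4} itself, but the explicit $Q_s$-paths are what later permit the sharp two-sided estimate of $c_s$ in Lemma \ref{lem:5.1}, so your choice would have to be revisited before proving Theorem \ref{thm:6}. Second, for the radial symmetry of $v_s$ you restrict to the radial subspace and invoke symmetric criticality, while the paper instead symmetrizes almost-optimal paths ($h_n\mapsto |h_n|^*$) and uses the localization clause \eqref{eq:4.14} of Ghoussoub's min-max theorem \cite{Ghoussoub1993} to force the PS sequence to lie asymptotically close to nonnegative radial decreasing functions, so the strong $\mathcal{H}_s$-limit inherits the symmetry. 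Your route is viable, but note that both arguments rest on the same unaddressed subtlety: one must know that symmetrization does not raise the min-max level, i.e.\ that $t\mapsto |h(t)|^*$ remains a continuous path in $\mathcal{H}_s$ (equivalently, that the radial min-max level does not exceed $c_s$), which is delicate because symmetric decreasing rearrangement is not continuous in Sobolev norms; since the paper asserts $|h_n|^*\in\Gamma_s$ without comment, this is a shared caveat rather than a gap specific to your argument.
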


We observe from theorems \ref{thm:1} and \ref{thm:3} that the  critical particle number $N^*$, which is the threshold of  the classical attractive BEC,  is the same as that for the attractive  non-Gaussian BEC.

\bigskip

Now, we study the asymptotic behavior of the local minimizer $u_s$ and the mountain pass solution $v_s$ as $s\to1_-$. We first show  that $\{u_s\}$  contains a
convergent subsequence as $s\to1_-$.

\begin{Theorem}\label{thm:5}  For any sequence $\{s_k\}$, correspondingly we have $\{u_{s_k}\}$. Suppose that $s_k\to 1_-$ as $k\to \infty$.  Then there exists a subsequence of  $\{s_k\}$, still denoted by $\{s_k\}$, such that $u_{s_k}\rightarrow u$ strongly in $L^2(\mathbb{R}^2)\cap L^4(\mathbb{R}^2)$ as  $k\to \infty$, where $u\in \mathcal{H}$  is a global minimizer of $E(u)$ on $S(N)$.
\end{Theorem}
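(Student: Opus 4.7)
The plan is to combine the uniform bounds from Theorem \ref{thm:2} with a Fourier interpolation argument so as to extract a subsequence converging in $L^2\cap L^4$, then to identify the limit as a global minimizer of $E$ by matching the infima $e_N(s_k)$ with the classical level $e_N=\inf_{S(N)}E$ from both sides.

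First I would upgrade the kinetic bound to a uniform fractional Sobolev estimate of fixed order. By Theorem \ref{thm:2}, the sequence $\{u_{s_k}\}$ is radial, radially decreasing, and satisfies $\|u_{s_k}\|_{L^2}^2=N$, $\int_{\mathbb{R}^2}|x|^2|u_{s_k}|^2\,dx\le C$ and $\int_{\mathbb{R}^2}|(-\Delta)^{s_k/2}u_{s_k}|^2\,dx\le C$. Fixing any $s_0\in(\tfrac12,1)$, H\"older's inequality on the Fourier side with conjugate exponents $s_k/s_0$ and $s_k/(s_k-s_0)$ gives
\[
\int_{\mathbb{R}^2}|\xi|^{2s_0}|\widehat{u_{s_k}}|^2\,d\xi\le\Big(\int_{\mathbb{R}^2}|\xi|^{2s_k}|\widehat{u_{s_k}}|^2\,d\xi\Big)^{s_0/s_k}\Big(\int_{\mathbb{R}^2}|\widehat{u_{s_k}}|^2\,d\xi\Big)^{1-s_0/s_k},
\]
so $u_{s_k}$ is uniformly bounded in $H^{s_0}(\mathbb{R}^2)$ once $s_k>s_0$. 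Combined with the tightness supplied by $|x|^2 u_{s_k}^2$ and a fractional Rellich--Kondrachov argument, a subsequence satisfies $u_{s_k}\to u$ strongly in every $L^q(\mathbb{R}^2)$ with $q\in[2,2/(1-s_0))$; in particular in $L^2$ and $L^4$, yielding a nonnegative limit $u\ge 0$ with $\|u\|_{L^2}^2=N$.

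I would then verify $u\in\mathcal{H}$ and $E(u)=e_N$. Fatou applied to $|x|^2|u_{s_k}|^2$ bounds the weighted $L^2$ norm; strong $L^2$ convergence yields $\widehat{u_{s_k}}\to\widehat u$ a.e. along a further subsequence, so Fatou on the Fourier side together with $|\xi|^{2s_k}\to|\xi|^2$ pointwise gives
\[
\int_{\mathbb{R}^2}|\xi|^2|\widehat u|^2\,d\xi\le\liminf_{k\to\infty}\int_{\mathbb{R}^2}|\xi|^{2s_k}|\widehat{u_{s_k}}|^2\,d\xi\le C,
\]
so $u\in H^1(\mathbb{R}^2)\cap L^2(\mathbb{R}^2,|x|^2dx)=\mathcal{H}$ and $u\in S(N)$. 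For the lower bound $E(u)\le\liminf e_N(s_k)$, the cubic term passes to the limit by strong $L^4$ convergence, the harmonic term by Fatou, and the kinetic term by the Fourier--Fatou inequality just displayed. For the upper bound, $N<N^*$ together with the classical existence result of \cite{Guo2014} yields a minimizer $u^*$ of $E$ on $S(N)$; since \eqref{eq:1.19} and \eqref{eq:1.15} imply $t_{s_k}\to+\infty$ while $\int_{\mathbb{R}^2}|(-\Delta)^{s_k/2}u^*|^2\,dx\to\int_{\mathbb{R}^2}|\nabla u^*|^2\,dx$ by dominated convergence on the Fourier side, $u^*\in A_{t_{s_k}}$ for $k$ large and $e_N(s_k)\le E_{N,s_k}(u^*)\to E(u^*)=e_N$. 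The two inequalities force $E(u)=e_N$.

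The main obstacle is handling the $s$-dependent kinetic operator under weak convergence, since the standard lower-semicontinuity statements for Gagliardo seminorms assume a fixed fractional exponent. Passing through Plancherel and using Fatou on the Fourier transforms circumvents this, but forces me to first secure pointwise a.e. convergence of $\widehat{u_{s_k}}$, which is why strong $L^2$ compactness (coming from the harmonic confinement rather than from any uniform Sobolev embedding) has to be established before the $H^1$ membership of $u$. A smaller subtlety is verifying that the comparison function $u^*$ actually lies in $A_{t_{s_k}}$ for large $k$; this relies on the blow-up of $t_{s_k}$ as $s_k\to 1_-$ under the hypothesis $N<N^*$, which is precisely what \eqref{eq:1.19} and \eqref{eq:1.15} deliver.
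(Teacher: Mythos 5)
Your proposal is correct and follows essentially the same route as the paper: a uniform fractional Sobolev bound of fixed order obtained by H\"older interpolation on the Fourier side (the paper uses $H^{3/4}$ where you use $H^{s_0}$), compactness from the harmonic confinement via Lemma \ref{lem:2.7}, Fatou on the Fourier transforms to place $u$ in $\mathcal{H}$, and comparison with the classical minimizer of \cite{Guo2014}, which lies in $A_{t_{s_k}}$ because $t_{s_k}\to+\infty$. The only real difference is the final step: the paper first shows that $u$ solves the limit equation (using boundedness of the multipliers $\mu_k$) and deduces the energy convergence \eqref{eq:5.3} before running a contradiction argument, whereas you close the argument with lower semicontinuity alone, squeezing $E(u)\le\liminf e_N(s_k)\le\limsup e_N(s_k)\le e_N$; this bypasses the Euler--Lagrange equation entirely and is a mild but legitimate simplification that suffices for the statement as given.
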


\bigskip

In contrast to the convergence of a subsequence of $\{u_s\}$,  the mountain pass solutions $\{v_s\}$ are blowing up as $s\to1_-$.
\begin{Theorem}\label{thm:6}

For any sequence $\{s_k\}$, correspondingly we have $\{v_{s_k}\}$. Suppose that $s_k\to 1_-$ as $k\to \infty$.  Then there exists a subsequence of  $\{s_k\}$, still denoted by $\{s_k\}$,  $s_k\to1_-$ as $k\to\infty$, such that
\[
\lim_{k\to\infty}{\|(-\Delta)^{\frac {s_k}{2}}v_{s_k}\|_{L^2(\mathbb{R}^2)}^2}{\Big(\frac{N_{s_k}^*}{N}\Big)^{-\frac{s}{1-s}}}=N.
\]
and
\[
\varepsilon_kv_{s_k}(\varepsilon_kx)\to \frac1{\sqrt{N^*}}Q(\frac x{\sqrt{N}})\quad strongly \quad in \quad L^2(\mathbb{R}^2),
\]
where
\[
\varepsilon_k=\|(-\Delta)^{\frac {s_k}{2}}v_{s_k}\|_{L^2(\mathbb{R}^2)}^{-\frac1{s_k}}
\]
and $Q(x)=Q(|x|)$ is the unique radial positive solution of Eq.\eqref{eq:1.14}.

\end{Theorem}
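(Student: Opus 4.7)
The plan is to combine a fractional Pohozaev identity with the sharp fractional Gagliardo--Nirenberg inequality to pin down the blow-up rate $T_{s_{k}}:=\|(-\Delta)^{s_{k}/2}v_{s_{k}}\|_{L^{2}(\mathbb{R}^{2})}^{2}$, then rescale by $\varepsilon_{k}:=T_{s_{k}}^{-1/(2s_{k})}$ and pass to the limit using the Frank--Lenzmann convergence $Q_{s}\to Q$. Testing \eqref{eq:1.12} against $v_{s}$ and against $x\cdot\nabla v_{s}$ (the fractional Pohozaev identity) and subtracting produces
\[
\int_{\mathbb{R}^{2}}|x|^{2}v_{s}^{2}\,dx=sT_{s}-\tfrac{1}{2}\|v_{s}\|_{L^{4}}^{4},\qquad E_{N,s}(v_{s})=\tfrac{1-s}{2}T_{s}+\int_{\mathbb{R}^{2}}|x|^{2}v_{s}^{2}\,dx,
\]
and in particular $\|v_{s}\|_{L^{4}}^{4}\leq 2sT_{s}$.

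For the mountain-pass level, the sharp fractional Gagliardo--Nirenberg inequality with extremizer $Q_{s}$, evaluated on $\partial A_{t_{s}}$, reduces to $\|u\|_{L^{4}}^{4}\leq 2st_{s}$ after substituting the definition of $t_{s}$; hence $\inf_{\partial A_{t_{s}}}E_{N,s}\geq\tfrac{1-s}{2}t_{s}$ and $c_{s}\geq\tfrac{1-s}{2}t_{s}$. For a matching upper bound I test along the one-parameter family $\phi_{\theta}(x):=\sqrt{N/N_{s}^{*}}\,\theta\,Q_{s}(\theta x)\in S(N)$, for which a direct computation gives
\[
E_{N,s}(\phi_{\theta})=\frac{N\theta^{2s}}{2(2s-1)}-\frac{sN^{2}\theta^{2}}{2N_{s}^{*}(2s-1)}+\frac{N}{2N_{s}^{*}\theta^{2}}\int_{\mathbb{R}^{2}}|x|^{2}Q_{s}^{2}\,dx.
\]
Optimizing in $\theta$ (the critical point being $\theta_{*}^{2}=(N_{s}^{*}/N)^{1/(1-s)}$) yields $\max_{\theta}E_{N,s}(\phi_{\theta})=\tfrac{1-s}{2}t_{s}+\eta_{s}$, where the correction $\eta_{s}$ is controlled by $(N/N_{s}^{*})^{(1+s)/(1-s)}\int|x|^{2}Q_{s}^{2}\,dx$ and is thus $o((1-s)t_{s})$ as $s\to 1_{-}$, using that $\int|x|^{2}Q_{s}^{2}\,dx$ stays bounded (Frank--Lenzmann decay). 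Combining, $c_{s_{k}}\sim\tfrac{1-s_{k}}{2}t_{s_{k}}$, and the energy identity forces both $T_{s_{k}}\leq t_{s_{k}}(1+o(1))$ and $\int|x|^{2}v_{s_{k}}^{2}\,dx=o(T_{s_{k}})$. Reinserting the latter into $\|v_{s_{k}}\|_{L^{4}}^{4}=2s_{k}T_{s_{k}}-2\int|x|^{2}v_{s_{k}}^{2}\,dx$ together with the sharp Gagliardo--Nirenberg inequality gives the matching lower bound $T_{s_{k}}\geq t_{s_{k}}(1+o(1))$. Since $(2s-1)t_{s}=N(N_{s}^{*}/N)^{s/(1-s)}$ and $2s_{k}-1\to 1$, the first limit in the theorem follows.

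For the blow-up profile, set $\varepsilon_{k}:=T_{s_{k}}^{-1/(2s_{k})}\to 0$ and $w_{k}(x):=\varepsilon_{k}v_{s_{k}}(\varepsilon_{k}x)$, so $\|w_{k}\|_{L^{2}}^{2}=N$ and $\|(-\Delta)^{s_{k}/2}w_{k}\|_{L^{2}}^{2}=1$. The rescaled equation is
\[
(-\Delta)^{s_{k}}w_{k}+\varepsilon_{k}^{2s_{k}+2}|x|^{2}w_{k}=\varepsilon_{k}^{2s_{k}-2}|w_{k}|^{2}w_{k}+\mu_{s_{k}}\varepsilon_{k}^{2s_{k}}w_{k},
\]
and the first claim yields $\varepsilon_{k}^{2s_{k}-2}\to N^{*}/N$ and $\varepsilon_{k}^{2s_{k}+2}\to 0$. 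Since $v_{s_{k}}$, and hence $w_{k}$, are radial and decreasing (Theorem \ref{thm:4}), translation freedom is ruled out; Strauss-type compactness for radial fractional Sobolev functions extracts $w_{k}\to w$ in $L^{2}(\mathbb{R}^{2})\cap L^{4}(\mathbb{R}^{2})$ up to a subsequence. Using Proposition 4.4 in \cite{Nezza2012}, the limit $w\in H^{1}(\mathbb{R}^{2})$ is a positive radial solution of $-\Delta w+\lambda w=(N^{*}/N)|w|^{2}w$ with $\|w\|_{L^{2}}^{2}=N$ and $\|\nabla w\|_{L^{2}}^{2}=1$; uniqueness of positive radial solutions of \eqref{eq:1.14} then identifies $w(x)=\frac{1}{\sqrt{N^{*}}}Q(x/\sqrt{N})$.

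The technically delicate step is the matching upper bound on $c_{s}$: controlling the $\theta^{-2}\int|x|^{2}Q_{s}^{2}$ term in $E_{N,s}(\phi_{\theta})$ requires $s$-uniform weighted-$L^{2}$ decay for $Q_{s}$ near $s=1$, which I expect to extract from the Frank--Lenzmann construction. A secondary subtlety is the strong $L^{4}$ convergence of $w_{k}$, which follows from radial decrease together with the asymptotic saturation of the sharp Gagliardo--Nirenberg inequality encoded in the preceding estimates.
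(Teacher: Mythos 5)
Your overall architecture (Pohozaev identity, sharp fractional Gagliardo--Nirenberg with constant expressed through $Q_s$, a mountain-pass level estimate $c_s=\tfrac{1-s}{2}t_s+o(1)$, rescaling by $\varepsilon_k$, radial compactness, and uniqueness of $Q$) matches the paper, but the two decisive steps are not actually carried by your argument. First, the derivation of $\|(-\Delta)^{s_k/2}v_{s_k}\|_2^2=t_{s_k}(1+o(1))$ from energy and Gagliardo--Nirenberg alone does not work. Writing $T_s=\|(-\Delta)^{s/2}v_s\|_2^2$ and $X=T_s/t_s$, the identities give $2c_s=(1-s)T_s+2\int|x|^2v_s^2$ and $\|v_s\|_4^4=(1+s)T_s-2c_s$, so your claim $\int|x|^2v_s^2=o(T_s)$ is circular: it is equivalent to $(1-s)(t_s-T_s)=o(T_s)$, i.e.\ it already presupposes a lower bound on $T_s$ of order $t_s$, whereas the only available lower bound (Lemma \ref{lem:5.2}) is of order $(1-s)t_s$. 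Moreover, inserting the exact identities into the sharp GN inequality $\|v_s\|_4^4\le 2st_sX^{1/s}$ yields, after expanding $X^{1/s}=X(1+\tfrac{1-s}{s}\ln X+\cdots)$, the constraint $X(1-2\ln X)\le 1+o(1)$; the function $X(1-2\ln X)$ equals $1$ at $X=1$, exceeds $1$ only on an intermediate window, and is again $\le 1$ for $X\lesssim 0.29$. So your argument only excludes $X$ in a middle range and leaves a low-kinetic-energy branch completely open; it does not force $X\to1$. This is exactly why the paper proves the first limit \emph{last}, by a genuinely different mechanism: testing the equation with cutoffs $\psi(x/R)$ as in \eqref{eq:5.30}--\eqref{eq:5.31}, using $\mu_s\le\lambda_1$ and the Gidas--Spruck Liouville theorem to get $\beta\neq0$, hence $-\mu_k\gtrsim\beta^2\varepsilon_k^{-2s_k}$, then exploiting radial monotonicity to obtain the pointwise decay $v_k(x)\lesssim|x|^{-2}\int v_k$ and conclude $\int|x|^2v_k^2\to0$ in \eqref{eq:5.35}; only then do the Pohozaev and level estimates give $X\to1$.

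Second, the profile identification has gaps of the same origin. Strauss-type radial compactness gives strong convergence in $L^p$ only for $p>2$, not in $L^2$, so strong $L^2$ convergence of $w_k$ must be deduced (as in the paper) from identifying the limit and computing its mass, not assumed. More importantly, $\|w\|_2^2=N$ does not determine the dilation: every function $\sqrt{N/N^*}\,\beta Q(\beta x)$ has $L^2$-mass $N$ in two dimensions, and your asserted normalization $\|\nabla w\|_2^2=1$ is unproven (it would require no loss of kinetic energy in a limit of $s_k$-dependent seminorms; weak lower semicontinuity only gives $\le1$). The paper pins down $\beta^2=1/N$ through the Lagrange multiplier limit $\mu_k\varepsilon_k^{2s_k}\to-1/N$, which again uses $\int|x|^2v_k^2\to0$ and $\|v_k\|_4^4/T_k\to2$, i.e.\ the decay argument you omitted; you also never address boundedness and sign of $\mu_k\varepsilon_k^{2s_k}$ or the exclusion of a zero multiplier (Liouville step), without which uniqueness for \eqref{eq:1.14} cannot be invoked. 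In short, the missing ingredient in both halves of your proposal is the cutoff/decay estimate \eqref{eq:5.30}--\eqref{eq:5.35}; without it neither the first limit nor the specific profile $\tfrac{1}{\sqrt{N^*}}Q(x/\sqrt{N})$ is established.
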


Results in Theorems \ref{thm:5} and \ref{thm:6} built a connection between mass critical  and supercritical cases of fractional Gross-Pitaevskii equations. A result for the classical Gross-Pitaevskii equation can be found in \cite{Yang2020}. However,  since both the fractional operator and  solutions  in equation \eqref{eq:1.12} depend on  the parameter $s$, it becomes challenging to work out the limiting equation, and delicate estimates are needed in studying the asymptotic behavior of normalized solutions as $s\to 1_-$. In order to overcome the difficulty, we use, among other things, the blowup argument to establish the uniform bound for the solutions, and adopt very weak solutions instead of  weak solutions in the process of passing limit.

We remark that, although we deal with the case $n=2$ and $p=2$ for the sake of physical reasons, our argument can be applied to  the general case  $n\geq 3$ and $p=\frac{4}{n}$.

\bigskip

This paper is   organized as follows. In section 2, we give some preliminary results; In section 3 and 4, we consider the existence and nonexistence of critical points of  $E_{N,s}(u)$, and Theorems \ref{thm:1}--\ref{thm:4} will be proved; then we study the asymptotic behavior of these critical points in section 5, proving Theorems \ref{thm:5}--\ref{thm:6}. In the last section, we present
the similar results of local minimizers for problem \eqref{eq:1.3} with the general potential $V(x)$.

In the following, we utilize $\|\cdot\|_p$ to represent $L^p(\mathbb{R}^2)$ norm.

 \bigskip

\section{Preliminaries}
\bigskip

In this section, we state some results for future reference.

First, we introduce the Pohozaev identity for  the following fractional Gross-Pitaevskii equation
\begin{equation}\label{eq:2.1}
(-\Delta)^su+V(x)u=f(u),\quad x\in \mathbb{R}^n,
\end{equation}
 where $V\in C^1(\mathbb{R}^n,\mathbb{R})$ and $|x\cdot \nabla V|\leq C(1+|V|)$ with $C>0$.
Let
\begin{equation}\label{eq:2.1a}
\mathcal{H}_{s,V}:=\Big\{u\in H^s(\mathbb{R}^2):\int_{\mathbb{R}^2}V(x)|u(x)|^2\,dx<\infty\Big\}.
\end{equation}
\begin{Lemma}\label{lem:2.1}

Assume that $u\in \mathcal{H}_{s,V}$ is a weak solution of \eqref{eq:2.1}.
Then we have  that
\begin{equation}\label{eq:2.2}
s\int_{\mathbb{R}^n}|(-\Delta)^{\frac{s}{2}}u|^2\,dx
-\frac12\int_{\mathbb{R}^n}(x\cdot \nabla V)|u|^2\,dx=\frac n2\int_{\mathbb{R}^n}[f(u)u-2F(u)]\,dx,
\end{equation}
where $F(t)=\int_0^t f(s)\,ds$.
\end{Lemma}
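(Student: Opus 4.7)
The strategy is to combine two test-function identities for equation \eqref{eq:2.1}. First, testing the equation against $u$ itself, which is legitimate since $u\in\mathcal{H}_{s,V}$, yields the usual energy identity
\begin{equation*}
\int_{\mathbb{R}^n}|(-\Delta)^{s/2}u|^2\,dx+\int_{\mathbb{R}^n}V(x)u^2\,dx=\int_{\mathbb{R}^n}f(u)u\,dx,\tag{A}
\end{equation*}
which I label (A). The real work is a second, Pohozaev-type, identity obtained by pairing the equation against the dilation generator $x\cdot\nabla u$.

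For the fractional kinetic term I would invoke the identity
\begin{equation*}
\int_{\mathbb{R}^n}(-\Delta)^s u\,(x\cdot\nabla u)\,dx=\frac{2s-n}{2}\int_{\mathbb{R}^n}|(-\Delta)^{s/2}u|^2\,dx,
\end{equation*}
which follows either from the Fourier multiplier representation of $(-\Delta)^s$ together with a scaling change of variables, or from the Caffarelli--Silvestre extension as in Ros-Oton--Serra. For the potential term, two integrations by parts using $x\cdot\nabla(u^2)=2u\,(x\cdot\nabla u)$ and $\mathrm{div}(Vx)=nV+x\cdot\nabla V$ give
\begin{equation*}
\int_{\mathbb{R}^n}Vu\,(x\cdot\nabla u)\,dx=-\frac{n}{2}\int_{\mathbb{R}^n}Vu^2\,dx-\frac{1}{2}\int_{\mathbb{R}^n}(x\cdot\nabla V)u^2\,dx,
\end{equation*}
where the hypothesis $|x\cdot\nabla V|\leq C(1+|V|)$ ensures all integrals converge. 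For the nonlinearity, $f(u)(x\cdot\nabla u)=x\cdot\nabla F(u)$ followed by integration by parts yields $\int_{\mathbb{R}^n}f(u)(x\cdot\nabla u)\,dx=-n\int_{\mathbb{R}^n}F(u)\,dx$. Assembling these three pieces produces
\begin{equation*}
\frac{2s-n}{2}\int_{\mathbb{R}^n}|(-\Delta)^{s/2}u|^2\,dx-\frac{n}{2}\int_{\mathbb{R}^n}Vu^2\,dx-\frac{1}{2}\int_{\mathbb{R}^n}(x\cdot\nabla V)u^2\,dx=-n\int_{\mathbb{R}^n}F(u)\,dx,\tag{B}
\end{equation*}
labelled (B). Adding $\tfrac{n}{2}\cdot$(A) to (B) eliminates the $\int Vu^2$ term and, after collecting coefficients of $\int|(-\Delta)^{s/2}u|^2$, directly produces \eqref{eq:2.2}.

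The principal obstacle is the rigorous justification of $x\cdot\nabla u$ as a test function, since $u$ is only a weak solution in $\mathcal{H}_{s,V}$ and $x\cdot\nabla u$ need not lie in any standard dual space. The cleanest workaround is a scaling argument: set $u_t(x):=u(x/t)$ and use the weak formulation against $\psi=-\tfrac{d}{dt}\big|_{t=1}u_t=x\cdot\nabla u$, after checking that $t\mapsto J(u_t)$ is differentiable at $t=1$ with constituent derivatives $\tfrac{d}{dt}\big|_{t=1}\int|(-\Delta)^{s/2}u_t|^2\,dx=(n-2s)\int|(-\Delta)^{s/2}u|^2\,dx$ (from Fourier scaling), $\tfrac{d}{dt}\big|_{t=1}\int Vu_t^2\,dx=n\int Vu^2\,dx+\int(x\cdot\nabla V)u^2\,dx$, and $\tfrac{d}{dt}\big|_{t=1}\int F(u_t)\,dx=n\int F(u)\,dx$, reproducing the formal calculation above. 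Alternatively, one can approximate $u$ by smooth cutoffs $u_k(x)=\eta(x/k)u(x)$, apply a Ros-Oton--Serra-type Pohozaev identity to each $u_k$, and pass to the limit using the integrability afforded by $u\in\mathcal{H}_{s,V}$ together with the growth bound on $x\cdot\nabla V$ and appropriate control on $f$.
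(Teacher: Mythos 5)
Your proposal is correct and follows essentially the same route as the paper: your identity (A) is the paper's equation \eqref{eq:2.4}, your identity (B) is (up to an overall sign) exactly the Pohozaev identity \eqref{eq:2.3}, which the paper simply quotes from Proposition 4.1 of \cite{Secchi2016}, and the linear combination $\tfrac n2\cdot(\mathrm{A})+(\mathrm{B})$ is the same final step. The only difference is that you sketch a derivation (and rigorous justification) of the Pohozaev identity itself, whereas the paper outsources that to the cited reference.
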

\begin{proof}
It follows from Proposition 4.1 in \cite{Secchi2016} that
 \begin{equation}\label{eq:2.3}
\frac{n-2s}{2}\int_{\mathbb{R}^n}|(-\Delta)^{\frac{s}{2}}u|^2\,dx+\frac n2\int_{\mathbb{R}^n}V(x)|u|^2\,dx
+\frac12\int_{\mathbb{R}^n}(x\cdot \nabla V)|u|^2\,dx=n\int_{\mathbb{R}^n}F(u)\,dx.
\end{equation}
 Multiplying \eqref{eq:2.1} by $u$ and integrating by part, we obtain
\begin{equation}\label{eq:2.4}
\int_{\mathbb{R}^n}|(-\Delta)^{\frac{s}{2}}u|^2\,dx+\int_{\mathbb{R}^n}V(x)|u|^2\,dx=\int_{\mathbb{R}^n}f(u)u\,dx.
\end{equation}
which, together with \eqref{eq:2.3}, yields \eqref{eq:2.2}.
\end{proof}

\bigskip

In order to prove the existence of local minimizers of $E_{N,s}$ in $S(N)$, we need the following fractional Gagliardo-Nirenberg  inequality
\begin{equation}\label{eq:2.5}
\int_{\mathbb{R}^n}|u|^p\,dx\leq C_0\left(\int_{\mathbb{R}^n}|(-\Delta)^{\frac s2}u|^2\,dx\right)^{\frac{n(p-2)}{4s}}
\left(\int_{\mathbb{R}^n}|u|^2\,dx\right)^{\frac p2-\frac{n(p-2)}{4s}},
\end{equation}
where $p\in[2,\frac{2n}{n-2s})$. In particular, if $n=2$ and $p=4$, we have
\begin{equation}\label{eq:2.6}
\int_{\mathbb{R}^2}|u|^4\,dx\leq C_0\left(\int_{\mathbb{R}^2}|(-\Delta)^{\frac s2}u|^2\,dx\right)^{\frac{1}{s}}
\left(\int_{\mathbb{R}^2}|u|^2\,dx\right)^{2-\frac{1}{s}},
\end{equation}
where $u\in H^{\frac{s}{2}}(\mathbb{R}^2)$ and $\frac{1}{2}<s\leq 1$. It is known from \cite{Frank-Lenzmann2016}, or Lemma 2.1 in \cite{He-Long2016},
that the maximizing problem
\begin{equation}\label{eq:2.7}
C_0= \max_{u\in H^{\frac{s}{2}}(\mathbb{R}^2)}\frac{\int_{\mathbb{R}^2}|u|^4\,dx}{\left(\int_{\mathbb{R}^2}|(-\Delta)^{\frac s2}u|^2\,dx\right)^{\frac{1}{s}}
\left(\int_{\mathbb{R}^2}|u|^2\,dx\right)^{2-\frac{1}{s}}},
\end{equation}
is achieved, and
\[
 C_0=\frac{2s}{(2s-1)^{1-1/s}\|Q_s\|_2^2},
\]
where $Q_s\in H^s(\mathbb{R}^2)$,  $Q_s\geq0$ and $Q_s\neq0$, is a ground state solution of  \eqref{eq:1.13}.

\bigskip

By Proposition 3.1, Lemmas 8.2 and 8.4 of \cite{Frank-Lenzmann2016}, we know further properties of $Q_s$:

\begin{Lemma}\label{lem:2.3}
$(i)$ There exists $x_0\in \mathbb{R}^2$ such that $Q_s(\cdot-x_0)$ is radial, positive and strictly decreasing in $|x-x_0|$. Furthermore,
$Q_s\in H^{2s+1}(\mathbb{R}^2)\cap C^\infty(\mathbb{R}^2)$ and
\[
\frac{C_1}{1+|x|^{N+2s}}\leq Q(x)\leq \frac{C_2}{1+|x|^{N+2s}}\quad for \quad x\in \mathbb{R}^2,
\]
where $C_2\geq C_1>0$ may be dependent of $s$;

$(ii)$ for any $s\in (\frac12,1)$, it holds that
\[
\int_{\mathbb{R}^2}Q_s\,dx\leq C_3,\quad Q_s\leq C_4|x|^{-2} \quad for \quad any \quad |x|\geq 1,
\]
where $C_3>0$, $C_4>0$ are  independent of $s$.
\end{Lemma}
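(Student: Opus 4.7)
Since Lemma \ref{lem:2.3} is essentially a compendium of results proved in Frank--Lenzmann (Proposition 3.1 and Lemmas 8.2, 8.4 of that paper), my plan is to outline how each assertion is obtained rather than redo the full analysis. The two structural tools I will use throughout are (a) the integral representation $Q_s = \mathcal{K}_s * Q_s^3$, where $\mathcal{K}_s$ denotes the fundamental solution of $(-\Delta)^s + 1$, and (b) reduction of the relevant norms of $Q_s$ to multiples of $\|Q_s\|_2$ via Pohozaev-type identities (Lemma \ref{lem:2.1} applied with $V\equiv 0$ and $f(u)=u^3$), combined with the convergence \eqref{eq:1.15}.

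For part (i), I would first fix a translation so that the centre of mass sits at $x_0$, and then establish radial symmetry by combining the Polya--Szego inequality $\|(-\Delta)^{s/2}u^*\|_2 \leq \|(-\Delta)^{s/2}u\|_2$ for the symmetric decreasing rearrangement with the invariance of the $L^2$ and $L^4$ norms under $u \mapsto u^*$, so that the Weinstein-type quotient characterising the ground state cannot increase under rearrangement. Strict monotonicity in $|x-x_0|$ would then follow from the moving-planes method for $(-\Delta)^s$ as in Chen--Li--Ou. Regularity is bootstrapped: $Q_s \in H^s \cap L^4$ gives $Q_s^3 \in L^{4/3}$, one convolution with $\mathcal{K}_s$ gains $2s$ derivatives, and iteration yields $Q_s \in H^{2s+1} \cap C^\infty$. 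The two-sided pointwise bound follows from the kernel asymptotics $\mathcal{K}_s(x) \sim c_s|x|^{-(n+2s)}$ at infinity, transferred to $Q_s$ through the convolution representation; the matching lower bound uses positivity of $Q_s^3$ together with a fractional Harnack inequality.

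For part (ii), the key new point is $s$-uniformity of the constants on $(\tfrac12,1)$. From \eqref{eq:1.15} one has $\|Q_s\|_2 \to \|Q\|_2$; the Pohozaev identity, combined with the equation itself in the form $\|(-\Delta)^{s/2}Q_s\|_2^2+\|Q_s\|_2^2=\|Q_s\|_4^4$, then bounds $\|(-\Delta)^{s/2}Q_s\|_2$ and $\|Q_s\|_4$ uniformly in $s$. A Moser iteration for \eqref{eq:1.13}, in which the only $s$-dependent ingredient is the Sobolev constant of $H^s \hookrightarrow L^q$ (which is uniformly bounded for $s$ away from $0$), produces a uniform bound $\|Q_s\|_\infty \leq C$. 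Feeding this into $Q_s = \mathcal{K}_s * Q_s^3$ and using the estimate $\mathcal{K}_s(x) \leq C|x|^{-n}e^{-c|x|}$ for $|x|\geq 1$, with $s$-uniform constants derivable from the Gaussian subordination formula, gives the tail bound $Q_s(x) \leq C_4|x|^{-2}$ for $|x|\geq 1$, and integrating this together with the $L^\infty$ bound on the unit ball yields $\int_{\mathbb{R}^2} Q_s \leq C_3$. The principal technical obstacle is precisely this $s$-uniform tracking of constants in the Moser iteration and in the kernel estimates, since both $\mathcal{K}_s$ and the relevant Sobolev inequalities degenerate qualitatively as $s\to 1_-$; the remedy is to work with the explicit subordination representation of $\mathcal{K}_s$ throughout, so as to keep the exponential-decay and Sobolev constants bounded on a neighbourhood of $s=1$.
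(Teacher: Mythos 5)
The paper offers no proof of this lemma at all: it is quoted directly from Proposition 3.1 and Lemmas 8.2, 8.4 of Frank--Lenzmann--Silvestre, so the comparison is really with that reference. Your sketch of part (i) follows the standard route there (rearrangement plus moving planes for symmetry and monotonicity, bootstrap through $Q_s=\mathcal K_s*Q_s^3$ for regularity, and the kernel asymptotics $\mathcal K_s(x)\sim c_s|x|^{-(n+2s)}$ for the two-sided decay) and is fine as an outline.

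Part (ii), however, contains a genuine error. You invoke the kernel bound $\mathcal K_s(x)\le C|x|^{-n}e^{-c|x|}$ for $|x|\ge 1$ with $s$-uniform constants. This is false for every $s<1$: the resolvent kernel of $(-\Delta)^s+1$ decays only algebraically, $\mathcal K_s(x)\asymp|x|^{-(n+2s)}$ as $|x|\to\infty$ --- which is precisely why part (i) of the lemma asserts the \emph{lower} bound $Q(x)\ge C_1(1+|x|^{N+2s})^{-1}$, incompatible with any exponential upper bound on $Q_s=\mathcal K_s*Q_s^3$. The subordination formula does not rescue this: the $s$-stable subordinator density has heavy tails, and integrating the Gaussian heat kernel against it reproduces the polynomial decay. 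Moreover, even granting your intermediate conclusion $Q_s(x)\le C_4|x|^{-2}$ for $|x|\ge1$, the last step is a non sequitur: $|x|^{-2}$ is not integrable over $\{|x|\ge1\}$ in $\mathbb R^2$, so that bound cannot yield $\int_{\mathbb R^2}Q_s\,dx\le C_3$. The correct argument uses the uniform polynomial bound $\mathcal K_s(x)\le C|x|^{-(2+2s)}$ for $|x|\ge1$ and $s\in[\tfrac12,1)$, giving $Q_s(x)\le C|x|^{-(2+2s)}$, which implies both $Q_s\le C_4|x|^{-2}$ and $\int_{|x|\ge1}Q_s\,dx\le 2\pi C/(2s)\le C_3$ with constants uniform in $s$. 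A secondary caveat: your uniform $L^\infty$ bound is fed by $\|(-\Delta)^{s/2}Q_s\|_2^2=(2s-1)^{-1}\|Q_s\|_2^2$ and $\|Q_s\|_4^4=2s(2s-1)^{-1}\|Q_s\|_2^2$ (Lemma \ref{lem:2.5}), which blow up as $s\to\tfrac12^+$; so your chain of estimates is at best uniform for $s$ bounded away from $\tfrac12$, whereas the statement claims all of $(\tfrac12,1)$.
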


\bigskip

Theorem 4 in \cite{Frank-Lenzmann2016} proved the following uniqueness for the ground state solution $Q_s$.
\begin{Lemma}\label{lem:2.4}
The ground state solution $Q_s\in H^s(\mathbb{R}^2)$ of equation \eqref{eq:1.13} is unique up to translation.
\end{Lemma}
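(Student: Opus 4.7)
The plan is to follow the strategy of Frank--Lenzmann, who established precisely this statement, and adapt only the ingredients that need to be assembled in our setting. The proof is not really a self-contained computation but rather a package of symmetry, decay, spectral and continuation arguments, so I will organize the proposal accordingly.

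First I would use symmetrization to reduce uniqueness to the class of radially symmetric, positive, strictly decreasing solutions. Any ground state $Q_s$ of \eqref{eq:1.13} minimizes, up to a Lagrange multiplier, a Weinstein--type quotient, and since the fractional Gagliardo--Nirenberg inequality \eqref{eq:2.7} is attained exactly on the radial decreasing rearrangement (after a translation), one obtains that every ground state is, up to translation, radial, positive, and strictly decreasing in $|x-x_0|$; this is already recorded in Lemma \ref{lem:2.3}(i). Combined with the pointwise decay bounds in Lemma \ref{lem:2.3}, one thus works in the Banach space of radial $H^s$ functions with the appropriate polynomial decay.

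Next I would analyze the linearization $L_+ \varphi := (-\Delta)^s \varphi + \varphi - 3 Q_s^2 \varphi$ around a radial ground state. The key spectral facts to establish are: $L_+$ has exactly one negative simple eigenvalue, and its nullspace, restricted to radial functions, is trivial. Non-triviality of the radial nullspace would immediately produce, by the implicit function theorem, a one-parameter family of radial ground states of fixed $L^2$-mass, contradicting the Pohozaev-type constraint from Lemma \ref{lem:2.1}. The nondegeneracy on the radial subspace is precisely the hard analytic step and is the main obstacle: for the fractional Laplacian one cannot use the classical ODE shooting argument of Kwong, and one must instead invoke a suitable oscillation/Sturm comparison result together with the Caffarelli--Silvestre harmonic extension, or a Perron-type argument on the extension problem, as carried out in Frank--Lenzmann.

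Finally, having nondegeneracy in hand, I would close uniqueness by a continuation/topological argument: let $\mathcal{U} \subset (1/2, 1] \times H^s_{\mathrm{rad}}(\mathbb{R}^2)$ be the set of pairs $(s, Q_s)$ with $Q_s$ a radial ground state of \eqref{eq:1.13}. Non-degeneracy of $L_+$ on radial perturbations and the implicit function theorem show that $\mathcal{U}$ is a smooth one-dimensional manifold locally parametrized by $s$, hence the projection $\mathcal{U} \to (1/2, 1]$ is a local homeomorphism. It is also proper thanks to the uniform decay and compactness estimates from Lemma \ref{lem:2.3}. Since at $s = 1$ uniqueness is the classical theorem of Kwong for \eqref{eq:1.14}, the fiber over $s=1$ is a single point (modulo translation); by properness and local injectivity the fiber over every $s \in (1/2, 1]$ is also a single point, giving uniqueness up to translation. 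The delicate step throughout is the radial nondegeneracy of $L_+$; once this is in place, the remaining assembly is standard perturbation theory and the symmetrization reduction.
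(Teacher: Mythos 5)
The paper does not prove this lemma at all: it simply invokes Theorem~4 of \cite{Frank-Lenzmann2016}, and your outline (symmetrization to the radial class, radial nondegeneracy of the linearized operator $L_+$ via the Caffarelli--Silvestre extension, then a proper local-homeomorphism continuation in $s$ down from Kwong's uniqueness at $s=1$) is a faithful summary of exactly that cited argument. So your proposal takes essentially the same approach as the paper's source, with the understanding that the radial nondegeneracy step is deferred to \cite{Frank-Lenzmann2016} rather than proved.
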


\bigskip

Let  $Q_s(x)\in H^{\frac{s}{2}}(\mathbb{R}^2)$ be the unique radial positive ground state of  \eqref{eq:1.13}, and $Q$ be the unique positive solution of \eqref{eq:1.14}.  Then, we have that
\begin{Lemma}\label{lem:2.5} For $\frac12<s\leq1$, there holds that
\begin{equation}\label{eq:2.8}
\int_{\mathbb{R}^2}|(-\Delta)^{\frac{s}{2}}Q_s|^2\,dx
=\frac{1}{2s}\int_{\mathbb{R}^2}|Q_s|^4\,dx=\frac{1}{2s-1}\int_{\mathbb{R}^2}|Q_s|^2\,dx.
\end{equation}
Moreover, $Q_s\to Q$ strongly in $L^2(\mathbb{R}^2)$ as $s\to1_-$, and then
\[
N_s^*:=\|Q_s\|_2^2\to N^*=\|Q\|_2^2
\]
as $s\to1_-.$
\end{Lemma}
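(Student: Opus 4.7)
The plan is to derive the two equalities by combining the Pohozaev identity from Lemma \ref{lem:2.1} with the usual energy identity, and then to invoke the Frank--Lenzmann convergence result already cited earlier in the paper (around \eqref{eq:1.15}) for the limit as $s\to 1_-$.

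First I would apply Lemma \ref{lem:2.1} to the scalar field equation \eqref{eq:1.13} with the choices $V(x)\equiv 1$, $f(u)=u^{3}$, $F(u)=u^{4}/4$, and $n=2$. Since $x\cdot\nabla V\equiv 0$, and $f(u)u-2F(u)=u^{4}-u^{4}/2=u^{4}/2$, the identity \eqref{eq:2.2} collapses to
\begin{equation*}
s\int_{\mathbb{R}^{2}}|(-\Delta)^{s/2}Q_{s}|^{2}\,dx=\frac{n}{2}\cdot\frac{1}{2}\int_{\mathbb{R}^{2}}|Q_{s}|^{4}\,dx=\frac{1}{2}\int_{\mathbb{R}^{2}}|Q_{s}|^{4}\,dx,
\end{equation*}
which gives the first equality in \eqref{eq:2.8} after dividing by $s$ (admissible since $s>1/2>0$).

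Next I would test equation \eqref{eq:1.13} against $Q_{s}$ itself (equivalently, use \eqref{eq:2.4} with $V\equiv 1$) to obtain the Nehari-type identity
\begin{equation*}
\int_{\mathbb{R}^{2}}|(-\Delta)^{s/2}Q_{s}|^{2}\,dx+\int_{\mathbb{R}^{2}}|Q_{s}|^{2}\,dx=\int_{\mathbb{R}^{2}}|Q_{s}|^{4}\,dx.
\end{equation*}
Substituting the first equality into this relation yields $\int|Q_{s}|^{2}\,dx=(1-\tfrac{1}{2s})\int|Q_{s}|^{4}\,dx=\tfrac{2s-1}{2s}\int|Q_{s}|^{4}\,dx$, whence $\tfrac{1}{2s}\int|Q_{s}|^{4}\,dx=\tfrac{1}{2s-1}\int|Q_{s}|^{2}\,dx$, which is the second equality in \eqref{eq:2.8}. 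The hypothesis $s>\tfrac12$ is precisely what is needed to ensure that $2s-1>0$ so these rearrangements are legitimate.

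For the convergence $Q_{s}\to Q$ strongly in $L^{2}(\mathbb{R}^{2})$ as $s\to 1_-$, I would invoke the result of Frank--Lenzmann \cite{Frank-Lenzmann2016} which the authors already recorded in the introduction preceding \eqref{eq:1.15}: the unique positive radial ground state $Q_{s}$ of \eqref{eq:1.13} converges in $L^{2}(\mathbb{R}^{2})\cap L^{4}(\mathbb{R}^{2})$ to the unique positive radial ground state $Q$ of \eqref{eq:1.14}. The claim $N_{s}^{*}=\|Q_{s}\|_{2}^{2}\to\|Q\|_{2}^{2}=N^{*}$ is then an immediate consequence of continuity of the $L^{2}$ norm under strong $L^{2}$ convergence. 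The only mild subtlety is checking that the Pohozaev identity in Lemma \ref{lem:2.1} actually applies to $Q_{s}$, but this is guaranteed by the regularity $Q_{s}\in H^{2s+1}(\mathbb{R}^{2})\cap C^{\infty}(\mathbb{R}^{2})$ recorded in Lemma \ref{lem:2.3}(i), so no real obstacle arises; the entire lemma is essentially a bookkeeping consequence of previously cited facts.
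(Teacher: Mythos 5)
Your proposal is correct and follows essentially the same route as the paper: the Pohozaev identity of Lemma \ref{lem:2.1} applied to \eqref{eq:1.13} with $V\equiv 1$, combined with the Nehari identity obtained by testing against $Q_s$, yields \eqref{eq:2.8}, and the $L^2$ convergence is quoted from Frank--Lenzmann. The only cosmetic difference is that you use the already-combined identity \eqref{eq:2.2} directly, while the paper writes out the intermediate form \eqref{eq:2.3} and then subtracts; these are equivalent.
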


\begin{proof}
Taking $V(x)=1$, $n=2$ and $f(u)=|u|^2u$ in Lemma \ref{lem:2.1}, we find
\begin{equation}\label{eq:2.9}
(1-s)\int_{\mathbb{R}^2}|(-\Delta)^{\frac{s}{2}}Q_s|^2\,dx+\int_{\mathbb{R}^2}|Q_s|^2\,dx
=\frac12\int_{\mathbb{R}^2}|Q_s|^4\,dx.
\end{equation}
Since $Q_s$ is a solution of  \eqref{eq:1.13}, we get
\begin{equation}\label{eq:2.10}
\int_{\mathbb{R}^2}|(-\Delta)^{\frac{s}{2}}Q_s|^2\,dx+\int_{\mathbb{R}^2}|Q_s|^2\,dx
=\int_{\mathbb{R}^2}|Q_s|^4\,dx.
\end{equation}
Equation \eqref{eq:2.8} follows from  \eqref{eq:2.9} and \eqref{eq:2.10}. The convergence $Q_s\to Q$ strongly in $L^2(\mathbb{R}^2)$ as $s\to1_-$ is a consequence of Lemmas 8.3 and 8.4 of \cite{Frank-Lenzmann2016}.
\end{proof}

Now, we derive a Br\'{e}zis-Kato type inequality.
\begin{Lemma}\label{lem:2.6}
For $u\in H^s(\mathbb{R}^n)$, $0<s<1$, we have that
\begin{equation}\label{eq:2.11}
\int_{\mathbb{R}^n}|(-\Delta)^{\frac s2}|u||^2\,dx\leq \int_{\mathbb{R}^n}|(-\Delta)^{\frac s2}u|^2\,dx.
\end{equation}
\end{Lemma}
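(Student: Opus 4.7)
The plan is to pass to the singular-integral (Aronszajn–Slobodeckij) representation of the fractional Sobolev seminorm and then exploit the pointwise reverse triangle inequality. Recall that for $u\in H^s(\mathbb{R}^n)$ with $0<s<1$, one has the identity
\begin{equation*}
\int_{\mathbb{R}^n}|(-\Delta)^{\frac{s}{2}}u|^2\,dx=C_{n,s}\int_{\mathbb{R}^n}\int_{\mathbb{R}^n}\frac{|u(x)-u(y)|^2}{|x-y|^{n+2s}}\,dx\,dy,
\end{equation*}
where $C_{n,s}>0$ is an explicit constant depending only on $n$ and $s$. This identity can be established via the Fourier characterization of $(-\Delta)^{s/2}$ together with the fact that the symbol $|\xi|^{2s}$ is, up to the constant $C_{n,s}$, the Fourier transform (in the distributional sense) of the kernel $|x|^{-(n+2s)}$; alternatively, it is standard and appears, e.g., in Proposition 3.4 of the Nezza reference already cited in the paper.

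Once this representation is in hand, the core observation is the elementary pointwise bound
\begin{equation*}
\bigl||u(x)|-|u(y)|\bigr|\leq |u(x)-u(y)|\quad\text{for a.e. }x,y\in\mathbb{R}^n,
\end{equation*}
which is just the reverse triangle inequality (valid pointwise for real- or complex-valued measurable functions). Squaring, dividing by $|x-y|^{n+2s}$, and integrating over $\mathbb{R}^n\times\mathbb{R}^n$ yields
\begin{equation*}
C_{n,s}\int_{\mathbb{R}^n}\int_{\mathbb{R}^n}\frac{\bigl||u(x)|-|u(y)|\bigr|^2}{|x-y|^{n+2s}}\,dx\,dy\leq C_{n,s}\int_{\mathbb{R}^n}\int_{\mathbb{R}^n}\frac{|u(x)-u(y)|^2}{|x-y|^{n+2s}}\,dx\,dy.
\end{equation*}
Translating both sides back through the Aronszajn–Slobodeckij identity gives exactly \eqref{eq:2.11}. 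As a byproduct, this double integral bound first shows $|u|\in H^s(\mathbb{R}^n)$, so the left-hand side of \eqref{eq:2.11} is well-defined.

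The only real subtlety is invoking the seminorm identity with the correct constant $C_{n,s}$; I would simply cite Proposition 3.4 (or the analogous statement) in \cite{Nezza2012}, which is already referenced in the paper, rather than reproving it. Thus the proof is essentially two lines after setting up the singular-integral representation, and there is no genuine obstacle beyond making sure the identity is cited cleanly.
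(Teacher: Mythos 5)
Your proof is correct and follows essentially the same route as the paper: both pass to the Gagliardo (singular-integral) representation of the $H^s$ seminorm and apply the pointwise reverse triangle inequality $\bigl||u(x)|-|u(y)|\bigr|\leq |u(x)-u(y)|$. The only cosmetic difference is that the paper first argues for $u\in H^1(\mathbb{R}^n)$ and concludes by density, whereas you work directly on $H^s(\mathbb{R}^n)$ and note that the double-integral bound itself shows $|u|\in H^s(\mathbb{R}^n)$, which is if anything slightly cleaner.
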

\begin{proof}
By Lemma 3.1 in \cite{Frank2008}, for any $u\in H^s(\mathbb{R}^n)$,
\[
\int_{\mathbb{R}^n}|(-\Delta)^{\frac s2}u|^2\,dx=\int_{\mathbb{R}^n}|\xi|^{2s}|\hat{u}|^2\,d\xi
=\int_{\mathbb{R}^n}\int_{\mathbb{R}^n}\frac{(u(x)-u(y))^2}{|x-y|^{n+2s}}\,dxdy.
\]
For any given $u\in H^1(\mathbb{R}^n)$, we know that $|u|\in H^1(\mathbb{R}^n)\subset H^s(\mathbb{R}^n)$. Hence,
\begin{equation*}
\begin{split}
\int_{\mathbb{R}^n}|(-\Delta)^{\frac s2}|u||^2\,dx&=\int_{\mathbb{R}^n}\int_{\mathbb{R}^n}\frac{(|u(x)|-|u(y)|)^2}{|x-y|^{n+2s}}\,dxdy\\
&\leq\int_{\mathbb{R}^n}\int_{\mathbb{R}^n}\frac{(u(x)-u(y))^2}{|x-y|^{n+2s}}\,dxdy\\
&=\int_{\mathbb{R}^n}|(-\Delta)^{\frac s2}u|^2\,dx.
\end{split}
\end{equation*}
The conclusion follows by the density argument.
\end{proof}

\bigskip

We have the following compact result.
\begin{Lemma}\label{lem:2.7} The inclusion $\mathcal{H}_s\hookrightarrow L^p(\mathbb{R}^2)$ is compact for $ p\in[2,\frac 2{1-s})$.
\end{Lemma}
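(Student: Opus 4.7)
The plan is to combine the standard local compact fractional Sobolev embedding on balls with a tail estimate supplied by the $|x|^2$ weight, exactly as in the classical proof for $H^1$--weighted spaces. Throughout, one uses that for $n=2$ the fractional critical exponent is $\tfrac{2n}{n-2s}=\tfrac{2}{1-s}$, so the range $p\in[2,\tfrac{2}{1-s})$ is precisely the subcritical range for $H^s(\mathbb{R}^2)$.

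First I would take an arbitrary bounded sequence $\{u_k\}\subset\mathcal{H}_s$. Since $\mathcal{H}_s$ is a Hilbert space (continuously embedded in $H^s(\mathbb{R}^2)$ and in $L^2(\mathbb{R}^2,|x|^2\,dx)$), reflexivity yields a subsequence with $u_k\rightharpoonup u$ in $\mathcal{H}_s$; in particular $u_k\rightharpoonup u$ in $H^s(\mathbb{R}^2)$, and Fatou's lemma gives $\int_{\mathbb{R}^2}|x|^2|u|^2\,dx<\infty$. For every $R>0$ the classical compact fractional Sobolev embedding on bounded domains, $H^s(B_R)\hookrightarrow\hookrightarrow L^p(B_R)$ for $p<\tfrac{2}{1-s}$, produces $u_k\to u$ strongly in $L^p(B_R)$.

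Next I would bound the tail uniformly in $k$. The Chebyshev-type inequality
\begin{equation*}
\int_{|x|>R}|u_k|^2\,dx\le\frac{1}{R^2}\int_{\mathbb{R}^2}|x|^2|u_k(x)|^2\,dx\le\frac{C}{R^2}
\end{equation*}
settles the case $p=2$. For $p\in(2,\tfrac{2}{1-s})$, pick $q\in(p,\tfrac{2}{1-s})$ and apply the interpolation inequality $\|u_k\|_{L^p(|x|>R)}\le\|u_k\|_{L^2(|x|>R)}^{\theta}\|u_k\|_{L^q(\mathbb{R}^2)}^{1-\theta}$ with $\tfrac{1}{p}=\tfrac{\theta}{2}+\tfrac{1-\theta}{q}$. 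The fractional Sobolev embedding $H^s(\mathbb{R}^2)\hookrightarrow L^q(\mathbb{R}^2)$ keeps $\|u_k\|_{L^q(\mathbb{R}^2)}$ bounded, while the $L^2$ tail estimate makes the first factor of order $R^{-\theta}$. Hence the tails in $L^p$ vanish uniformly in $k$ as $R\to\infty$.

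To conclude, for any $\varepsilon>0$ I would fix $R$ so large that $\|u_k\|_{L^p(|x|>R)}+\|u\|_{L^p(|x|>R)}<\varepsilon/2$ uniformly in $k$, and then choose $k$ large enough, using the local strong convergence on $B_R$, so that $\|u_k-u\|_{L^p(B_R)}<\varepsilon/2$. Combining the two estimates yields $u_k\to u$ in $L^p(\mathbb{R}^2)$. The only mildly delicate point is arranging the interpolation exponents in the range $p\in(2,\tfrac{2}{1-s})$ and verifying that a valid $q<\tfrac{2}{1-s}$ can always be chosen strictly larger than $p$; this is immediate since $p$ is strictly subcritical. No other obstacle arises.
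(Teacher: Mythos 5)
Your proof is correct and follows essentially the same strategy as the paper: the $|x|^2$ weight gives a uniform Chebyshev tail bound, local compactness of the fractional Sobolev embedding handles bounded sets, and interpolation upgrades from $L^2$ to $L^p$. The only cosmetic difference is that the paper first concludes strong convergence in $L^2(\mathbb{R}^2)$ and then applies the Gagliardo--Nirenberg inequality \eqref{eq:2.5} to $u_n-u$ to reach all $p\in[2,\tfrac{2}{1-s})$, whereas you interpolate on the tail region before combining with the local convergence; both are valid.
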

\begin{proof}
Let $\{u_n\}$ be a bounded sequence in $\mathcal{H}_s$. We may assume that
\[
u_n\rightharpoonup u \quad {\rm weakly\quad in}  \quad \mathcal{H}_s;\quad
u_n\to u \quad {\rm strongly \quad in}\quad L_{loc}^p(\mathbb{R}^2)\quad  {\rm for}\quad p\in[2,\frac 2{1-s}).
\]
For any $\varepsilon>0$, there exists $R$ large enough such that
\begin{equation}\label{eq:2.12}
\int_{\{|x|\geq R\}}|u|^2\,dx<\frac\varepsilon8
\end{equation}
and
\begin{equation}\label{eq:2.13}
\begin{split}
\int_{\{|x|\geq R\}}|u_n|^2\,dx\leq \frac1{R^2}\int_{\{|x|\geq R\}}|x|^2|u_n|^2\,dx\leq \frac1{R^2}\int_{\mathbb{R}^2}|x|^2|u_n|^2\,dx<\frac\varepsilon8.
\end{split}
\end{equation}
Since $u_n\to u$ strongly in $L_{loc}^2(\mathbb{R}^2)$,  there exists $N\in \mathbb{N}$ such that if $n>N$,
\begin{equation}\label{eq:2.14}
\int_{\{|x|\leq R\}}|u_n-u|^2\,dx<\frac\varepsilon2.
\end{equation}
We deduce from  \eqref{eq:2.12}, \eqref{eq:2.13} and  \eqref{eq:2.14} that
\begin{equation*}
\begin{split}
\int_{\mathbb{R}^2}|u_n-u|^2\,dx&= \big(\int_{\{|x|\leq R\}}+\int_{\{|x|\geq R\}}\big)|u_n-u|^2\,dx\\
&<\frac\varepsilon2+2\big(\int_{\{|x|\geq R\}}|u_n|^2\,dx+\int_{\{|x|\geq R\}}|u|^2\,dx\big)\\
&<\frac\varepsilon2+2\big(\frac\varepsilon8+\frac\varepsilon8\big)=\varepsilon.
\end{split}
\end{equation*}
Thus $u_n\to u$ strongly in $L^2(\mathbb{R}^2)$.  By the fractional Gagliardo-Nirenberg  inequality \eqref{eq:2.5}, $u_n\to u$ strongly in $L^p(\mathbb{R}^2)$  for any $ p\in[2,\frac 2{1-s})$. The proof is complete.

\end{proof}

\bigskip

Finally, we recall a result stated in Theorem 4 in \cite{Bellazzini2017}.

\begin{Lemma}\label{lem:2.8} Let $V:\mathbb{R}^n\to [0,\infty)$  be a measurable function, radially symmetric satisfying
$V(|x|)\leq V(|y|)$ for $|x|\leq |y|$  then we have:
\begin{equation}\label{eq:2.15}
\int_{\mathbb{R}^n}V(|x|)|u^*|^2\,dx \leq \int_{\mathbb{R}^n}V(|x|)|u|^2\,dx
\end{equation}
If in addition $V(|x|)< V(|y|)$ for $|x|<|y|$ then
\begin{equation}\label{eq:2.16}
\int_{\mathbb{R}^n}V(|x|)|u^*|^2\,dx =
\int_{\mathbb{R}^n}V(|x|)|u|^2\,dx\Rightarrow u(x) = u^*(|x|).
\end{equation}
\end{Lemma}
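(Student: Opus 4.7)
The plan is to reduce everything to the elementary fact that symmetric decreasing rearrangement concentrates the $L^2$ mass in balls centered at the origin. Because $V$ is radially nondecreasing, its super-level sets have a particularly simple shape, which will let me invert the usual Hardy--Littlewood inequality (which goes the ``wrong'' way for an increasing weight) via a complementary-ball argument.

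First I would apply the layer cake formula to write
\begin{equation*}
V(|x|) = \int_0^\infty \chi_{\{V > t\}}(x)\,dt.
\end{equation*}
Since $V(|x|)$ is radially nondecreasing, for each $t \geq 0$ there exists $r(t) \in [0,\infty]$ with $\{x \in \mathbb{R}^n : V(|x|) > t\} = \mathbb{R}^n \setminus B_{r(t)}$ up to a measure-zero set. By Fubini this gives
\begin{equation*}
\int_{\mathbb{R}^n} V(|x|)|u|^2\,dx = \int_0^\infty \Big(\int_{\mathbb{R}^n \setminus B_{r(t)}} |u|^2\,dx\Big)\,dt,
\end{equation*}
and the analogous identity holds for $u^*$. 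It therefore suffices to prove that for every $R > 0$,
\begin{equation*}
\int_{\mathbb{R}^n \setminus B_R} |u^*|^2\,dx \leq \int_{\mathbb{R}^n \setminus B_R} |u|^2\,dx.
\end{equation*}
Because $\|u^*\|_2 = \|u\|_2$, this is equivalent to the classical bathtub-type estimate $\int_{B_R}|u|^2\,dx \leq \int_{B_R}|u^*|^2\,dx$, which follows directly from the definition of the symmetric decreasing rearrangement: among all measurable sets of given Lebesgue measure $|B_R|$, the ball $B_R$ maximizes $\int_E |u^*|^2\,dx$ since $|u^*|^2$ is itself radially decreasing, and by the definition of rearrangement this quantity is at least $\int_{B_R}|u|^2\,dx$. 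Integrating the inequality against $dt$ yields \eqref{eq:2.15}.

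For the strict part under strict monotonicity of $V$, equality in \eqref{eq:2.15} forces, by the monotone convergence theorem and the nonnegativity of the integrand $\int_{B_R^c}(|u|^2 - |u^*|^2)\,dx \geq 0$, that $\int_{B_R}|u|^2\,dx = \int_{B_R}|u^*|^2\,dx$ for almost every $R > 0$ (here we use that strict monotonicity makes $t \mapsto r(t)$ have full range). Differentiating in $R$ shows that the distribution functions of $|u|$ and $u^*$ coincide on every ball, and combined with radial symmetry of $u^*$ this implies $|u(x)| = u^*(|x|)$ a.e., giving \eqref{eq:2.16}. The main technical point I expect to need care with is this last rigidity step, since the passage from equality of ball integrals to pointwise equality requires either a density argument via indicator functions or an appeal to the uniqueness-of-rearrangement part of standard rearrangement theory; however, since the result is attributed to \cite{Bellazzini2017}, I would invoke that reference for the equality case rather than reprove it from scratch.
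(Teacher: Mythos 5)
The paper itself gives no proof of this lemma: it is recorded verbatim from Theorem 4 of \cite{Bellazzini2017} and simply cited, so your layer-cake argument is a genuine self-contained derivation where the paper has only a reference. The proof of \eqref{eq:2.15} is correct: since $V$ is radial and nondecreasing, each super-level set $\{V>t\}$ is the complement of a ball up to a null set, and Fubini reduces the claim to $\int_{B_R}|u|^2\,dx\leq\int_{B_R}|u^*|^2\,dx$, which is Hardy--Littlewood/bathtub with $E=B_R=E^*$ (one needs $\|u\|_2<\infty$ to pass between a ball and its complement, which holds in the paper's setting). Two steps in the equality case need tightening, though. First, ``for a.e.\ $t$'' does not directly give ``for a.e.\ $R$'': if $V$ has jumps, $t\mapsto r(t)$ is constant on the gaps, and even a strictly increasing $V$ can map a null set of $t$'s onto a set of radii of positive measure; the repair is that strict monotonicity makes $\{t:\,r(t)\in(R-\epsilon,R+\epsilon)\}$ contain a nonempty open interval, so equality of the ball integrals holds on a dense set of $R$ and then for all $R$ by continuity of $R\mapsto\int_{B_R}$. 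Second, ``differentiating in $R$'' only yields equality of the spherical averages of $|u|^2$ and $(u^*)^2$, which by itself does not force $|u|=u^*$; the correct rigidity mechanism is the equality case of the bathtub principle: $\int_{B_R}|u|^2=\int_{B_R}|u^*|^2=\sup_{|E|=|B_R|}\int_E|u|^2$ forces $B_R$ to be an essential super-level set of $|u|$ for every $R$, hence $|u|$ agrees a.e.\ with a radially decreasing function, i.e.\ with $u^*$. Since you explicitly defer the rigidity step to \cite{Bellazzini2017}, the proposal is acceptable as written, but the differentiation argument alone would not close \eqref{eq:2.16}.
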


\bigskip

\section{Local minimizers}

\bigskip

In this section, we study the existence and nonexistence of local minimizers of $E_{N,s}(u)$ in  $A_{t_s}$. We show that $E_{N,s}(u)$ has a local minimizer inside $A_{t_s}$ if $0<N<N^*=\|Q\|_{L^2(\mathbb{R}^2)}^2$ and $s$ close to $1$, while $E_{N,s}(u)$ does not admit a minimizer in $A_{t_s}$ if $N>N^*$. Moreover, the minimizer is actually a ground state of $E_{N,s}(u)$ if $s$ close to $1$.

We first show that local minimizers of $E_{N,s}(u)$ in  $A_{t_s}$ can not achieved on the boundary of   $A_{t_s}$.
\begin{Lemma}\label{lem:3.1}
Let $0<N<N^*$. There exists $\varepsilon_N>0$ for any $s\in (1-\varepsilon_N,1)$ such that
\begin{equation}\label{eq:3.1}
e_N(s)=\inf_{u\in A_{t_s}}E_{N,s}(u)\leq C<\inf_{u\in \partial A_{t_s}}E_{N,s}(u),
\end{equation}
where $C>0$ is independent of $s$.
\end{Lemma}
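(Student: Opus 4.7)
The plan is to establish the two bounds separately: an upper bound $e_N(s) \leq C$ with $C$ independent of $s$, coming from a fixed test function, and a lower bound $\inf_{u\in\partial A_{t_s}} E_{N,s}(u) \to +\infty$ as $s \to 1_-$, obtained by applying the sharp fractional Gagliardo-Nirenberg inequality at the boundary level $\|(-\Delta)^{s/2} u\|_2^2 = t_s$. The strict separation then follows for $s$ sufficiently close to $1$.

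For the upper bound, I would fix any $\varphi \in C_c^\infty(\mathbb{R}^2)$ with $\|\varphi\|_2^2 = N$. Because $|\xi|^{2s}\leq 1+|\xi|^2$ for every $s\in(0,1)$, the Fourier characterization gives
\[
\|(-\Delta)^{s/2}\varphi\|_2^2 = \int_{\mathbb{R}^2} |\xi|^{2s}|\widehat{\varphi}(\xi)|^2\,d\xi \leq \|\varphi\|_2^2 + \|\nabla\varphi\|_2^2
\]
uniformly in $s$. Since $\varphi$ has compact support, $\int|x|^2|\varphi|^2\,dx$ and $\int|\varphi|^4\,dx$ are finite and independent of $s$, so $E_{N,s}(\varphi) \leq C$ for a constant $C$ independent of $s$. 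Because $N < N^*$, the convergence $N_s^*\to N^*$ from \eqref{eq:1.15} yields $N_s^*/N > 1$ for $s$ close to $1$, hence $t_s \to +\infty$ by \eqref{eq:1.19}. Thus $\varphi \in A_{t_s}$ for $s$ close enough to $1$, producing $e_N(s) \leq E_{N,s}(\varphi) \leq C$.

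For the boundary lower bound, the sharp Gagliardo-Nirenberg inequality \eqref{eq:2.6}--\eqref{eq:2.7} applied to $u \in \partial A_{t_s}$, together with $\|u\|_2^2=N$, gives
\[
\frac{1}{4}\int_{\mathbb{R}^2}|u|^4\,dx \leq \frac{1}{4}\cdot\frac{2s}{(2s-1)^{1-1/s} N_s^*}\cdot t_s^{1/s} \cdot N^{2-1/s}.
\]
The crucial algebraic step is to substitute the definition \eqref{eq:1.19} of $t_s$: a direct computation (using $t_s^{1/s}=\bigl(\tfrac{N}{2s-1}\bigr)^{1/s}\bigl(\tfrac{N_s^*}{N}\bigr)^{1/(1-s)}$ and collecting the powers of $N$, $2s-1$, and $N_s^*/N$) collapses the right-hand side to exactly $\frac{s}{2}t_s$; this is precisely the reason $t_s$ was defined this way. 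Consequently
\[
E_{N,s}(u) \geq \frac{1}{2}\|(-\Delta)^{s/2}u\|_2^2 - \frac{1}{4}\int_{\mathbb{R}^2}|u|^4\,dx \geq \frac{1-s}{2}t_s
\]
for every $u \in \partial A_{t_s}$.

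Finally, writing $(1-s)t_s = \frac{N(1-s)}{2s-1}\exp\bigl(\tfrac{s}{1-s}\ln(N_s^*/N)\bigr)$ and using $\ln(N_s^*/N) \to \ln(N^*/N) > 0$, the exponential growth in $1/(1-s)$ dominates the linear factor $(1-s)$, so $(1-s)t_s \to +\infty$. Hence $\frac{1-s}{2}t_s > C$ for all $s$ sufficiently close to $1$, yielding the required separation $e_N(s)\leq C < \inf_{u\in\partial A_{t_s}}E_{N,s}(u)$. The only delicate step is the algebraic identification that sends the sharp Gagliardo-Nirenberg bound exactly to $\frac{s}{2}t_s$ on $\partial A_{t_s}$; everything else is routine test-function analysis combined with the large asymptotics of $t_s$.
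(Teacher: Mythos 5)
Your proposal is correct and follows essentially the same route as the paper: a fixed test function $\varphi\in C_c^\infty$ with $\|(-\Delta)^{s/2}\varphi\|_2^2$ controlled uniformly in $s$ gives $e_N(s)\leq C$, while the sharp Gagliardo--Nirenberg inequality combined with the definition of $t_s$ gives $\inf_{\partial A_{t_s}}E_{N,s}\geq \tfrac{1-s}{2}t_s\to+\infty$. The only cosmetic difference is that the paper phrases the boundary bound via the auxiliary function $f(t)=t-\lambda_s N^{2-1/s}(N_s^*)^{-1}t^{1/s}$, whose maximum over $t>0$ is attained at $t_s$ with value $(1-s)t_s$, whereas you evaluate the same expression directly at $t=t_s$; your algebraic identification of the Gagliardo--Nirenberg bound with $\tfrac{s}{2}t_s$ on $\partial A_{t_s}$ checks out.
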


\begin{proof}
Firstly, we find a proper $t_s>0$.

For $u\in S(N)$, by the Gagliardo-Nirenberg inequality \eqref{eq:2.6}, we have
\begin{equation}\label{eq:3.2}
\begin{split}
E_{N,s}(u)\geq\frac{1}{2}\int_{\mathbb{R}^2}\big(|(-\Delta)^{\frac s2} u|^2+|x|^2|u|^2\big)\,dx-\frac{1}{4}C_0
 \left(\int_{\mathbb{R}^2}|(-\Delta)^{\frac s2}u|^2\,dx\right)^{\frac{1}{s}}
\left(\int_{\mathbb{R}^2}|u|^2\,dx\right)^{2-\frac{1}{s}}
\end{split}
\end{equation}

For $s>\frac12$, let $\lambda_s=\frac s{(2s-1)^{1-\frac1s}}$ and
\[
f(t)=t-\lambda_s\frac{N^{2-\frac1s}}{N_s*}t^{\frac1s}.
\]
Let $t_s$ be defined in \eqref{eq:1.19}. We may verify that $f(t)$ is increasing in $[0,t_s]$ and  decreasing in $[t_s,+\infty)$. Hence,
\begin{equation*}
\max_{t>0}f(t)=f(t_s)=(1-s)t_s=\frac{N(1-s)}{2s-1}\Big(\frac{N_s^*}{N}\Big)^{\frac{s}{1-s}}.
\end{equation*}
It follows from \eqref{eq:3.2} that
\begin{equation}\label{eq:3.3}
\inf_{u\in \partial A_{t_s}}E_{N,s}(u)\geq \frac12f(t_s)=\frac12(1-s)t_s.
\end{equation}

\bigskip

Secondly, we bound $e_N(s)$ by a positive constant $C$, which is independent of $s$.

Choose $\varphi\in C_c^\infty(\mathbb{R}^2)$ such that $\|\varphi\|_2^2=N$. By the H\"{o}lder inequality, we get
\begin{equation}\label{eq:3.4}
\begin{split}
\int_{\mathbb{R}^2}|(-\Delta)^{\frac s2}\varphi|^2\,dx&=\int_{\mathbb{R}^2}|\xi|^{2s}|\widehat{\varphi}|^2\,d\xi\\
&\leq \Big(\int_{\mathbb{R}^2}|\xi|^{2}|\widehat{\varphi}|^2\,dx\Big)^{s}
\Big(\int_{\mathbb{R}^2}|\widehat{\varphi}|^2\,dx\Big)^{1-s}\\
&\leq \Big(1+\int_{\mathbb{R}^2}|\xi|^{2}|\widehat{\varphi}|^2\,dx\Big)(1+N).
\end{split}
\end{equation}
Therefore, there exists $C>0$ independent of $s$ such that $E_{N,s}(\varphi)\leq C$.

Since $0<N<N^*$ and by Lemma \ref{lem:2.5}, $N_s^*\to N^*$, there exists $\varepsilon_N>0$ such that if $s\in (1-\varepsilon_N,1)$,
\begin{equation}\label{eq:3.5}
\frac12(1-s)t_s=\frac{2N(1-s)}{2s-1}\Big(\frac{N_s^*}{N}\Big)^{\frac{s}{1-s}}>C
\end{equation}
and
\begin{equation}\label{eq:3.6}
\Big(1+\int_{\mathbb{R}^2}|\xi|^{2}|\widehat{\varphi}|^2\,dx\Big)(1+N)<t_s.
\end{equation}
It follows from \eqref{eq:3.3} and \eqref{eq:3.6} that $\varphi\in A_{t_s}$. Thus, \eqref{eq:3.3} and \eqref{eq:3.5} yield that  for $s\in (1-\varepsilon_N,1)$,
\begin{equation*}
e_N(s)=\inf_{u\in A_{t_s}}E_{N,s}(u)\leq E_{N,s}(\varphi)\leq C<\inf_{u\in \partial A_{t_s}}E_{N,s}(u).
\end{equation*}
\end{proof}

\bigskip

\begin{Lemma}\label{lem:3.2}
Suppose $s\in (\frac 12, 1)$. Let $\{u_n\}\subseteq A_{t_s}$ be the minimizing sequence of $e_N(s)$. That is,
\[
E_{N,s}(u_n)\to e_N(s)<\inf_{u\in \partial A_{t_s}}E_{N,s}(u) \quad {\rm as} \quad n\to\infty.
\]
 Then, there exists a subsequence of $\{u_n\}$ converging strongly to a local minimizer $u$ of  $e_N(s)$.
\end{Lemma}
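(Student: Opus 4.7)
The plan is to combine boundedness coming from the constraint $A_{t_s}$ with the compact embedding of $\mathcal{H}_s$ into $L^p(\mathbb{R}^2)$ for $p\in[2,2/(1-s))$ given by Lemma \ref{lem:2.7}, and then upgrade weak convergence to strong convergence using the fact that the energy is converging.

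First I would observe that $\{u_n\}\subseteq A_{t_s}$ together with the constraint $\|u_n\|_2^2=N$ gives $\|(-\Delta)^{s/2}u_n\|_2^2\le t_s$, hence $\{u_n\}$ is bounded in $H^s(\mathbb{R}^2)$. Using the Gagliardo-Nirenberg inequality \eqref{eq:2.6} to control the quartic term, the convergence $E_{N,s}(u_n)\to e_N(s)$ then yields a uniform bound on $\int_{\mathbb{R}^2}|x|^2|u_n|^2\,dx$, so $\{u_n\}$ is bounded in $\mathcal{H}_s$. By Lemma \ref{lem:2.7} (and since $s>\tfrac12$ implies $4<2/(1-s)$), up to a subsequence $u_n\rightharpoonup u$ weakly in $\mathcal{H}_s$ and $u_n\to u$ strongly in $L^2(\mathbb{R}^2)\cap L^4(\mathbb{R}^2)$. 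In particular $\|u\|_2^2=N$, so $u\in S(N)$.

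Next I would pass to the limit. Weak lower semicontinuity of $\|(-\Delta)^{s/2}\cdot\|_2$ and Fatou for the weighted $L^2$ norm give
\begin{equation*}
\|(-\Delta)^{s/2}u\|_2^2\le\liminf_{n\to\infty}\|(-\Delta)^{s/2}u_n\|_2^2\le t_s,\quad \int_{\mathbb{R}^2}|x|^2|u|^2\,dx\le\liminf_{n\to\infty}\int_{\mathbb{R}^2}|x|^2|u_n|^2\,dx,
\end{equation*}
so $u\in A_{t_s}$ and $E_{N,s}(u)\le\liminf E_{N,s}(u_n)=e_N(s)$, whence $E_{N,s}(u)=e_N(s)$. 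The gap condition $e_N(s)<\inf_{u\in\partial A_{t_s}}E_{N,s}(u)$ then forces $\|(-\Delta)^{s/2}u\|_2^2<t_s$, i.e.\ $u$ lies in the interior of $A_{t_s}$, confirming that $u$ is a genuine local minimizer.

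Finally, for the strong convergence I would use the energy identity. Since $\|u_n\|_4^4\to\|u\|_4^4$ and $E_{N,s}(u_n)\to E_{N,s}(u)$, we obtain
\begin{equation*}
\|(-\Delta)^{s/2}u_n\|_2^2+\int_{\mathbb{R}^2}|x|^2|u_n|^2\,dx\longrightarrow \|(-\Delta)^{s/2}u\|_2^2+\int_{\mathbb{R}^2}|x|^2|u|^2\,dx.
\end{equation*}
Combined with the two one-sided lower-semicontinuity inequalities above, each summand must converge separately, which yields strong convergence of $u_n$ to $u$ in $\mathcal{H}_s$. The main obstacle I anticipate is ensuring that the compact embedding indeed reaches the critical exponent $p=4$ used to pass to the limit in $\int|u_n|^4\,dx$; this hinges on the hypothesis $s\in(1/2,1)$ through the inequality $4<2/(1-s)$, and it is precisely what allows the Gagliardo-Nirenberg/Pohozaev machinery and the compactness lemma to cooperate.
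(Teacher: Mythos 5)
Your proposal is correct and follows essentially the same route as the paper: boundedness in $\mathcal{H}_s$ from the constraint plus the Gagliardo--Nirenberg bound, strong $L^2\cap L^4$ convergence via the compact embedding of Lemma \ref{lem:2.7} (using $s>\tfrac12$ so that $4<\tfrac{2}{1-s}$), weak lower semicontinuity to place the limit in $A_{t_s}$, and the energy identity to upgrade weak to strong convergence in $\mathcal{H}_s$. The only difference is cosmetic ordering: the paper closes the chain $\lim\|u_n\|_{\mathcal{H}_s}^2\le\|u\|_{\mathcal{H}_s}^2$ using $E_{N,s}(u)\ge e_N(s)$ and then concludes $E_{N,s}(u)=e_N(s)$, whereas you establish minimality first and then split the converging sum into its two lower-semicontinuous summands.
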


\begin{proof}
The assumption $E_{N,s}(u_n)\to e_N(s)$ as $n\to\infty$ implies that
\begin{equation}\label{eq:3.7}
E_{N,s}(u_n)\leq 2e_N(s)
\end{equation}
provided that $n$ is  large enough. Since $\{u_n\}\subseteq A_{t_s}$, that is,
\begin{equation*}\label{unb}
\int_{\mathbb{R}^2}|(-\Delta)^{\frac s2} u_n|^2\,dx\leq t_s,
\end{equation*}
we deduce from  \eqref{eq:3.2} and \eqref{eq:3.7} that
\begin{equation*}\label{unb2}
\int_{\mathbb{R}^2}|x|^2 |u_n|^2\,dx\leq 4e_N(s)+\frac{1}{2}C_0t_s^{\frac1s}N^{2-\frac{1}{s}}.
\end{equation*}
Hence, $\{u_n\}$ is uniformly bounded in $\mathcal{H}_s$. We may assume that for $s>\frac12$,
$u_n\rightharpoonup u$ weakly in $\mathcal{H}_s$;
$u_n\to u$ strongly in $L_{loc}^p(\mathbb{R}^2)$ for $p\in[2,\frac 2{1-s})$ as $n\to\infty$.  Moreover,
\begin{equation}\label{eq:3.8}
\|u\|_{\mathcal{H}_s}\leq \liminf_{n\to\infty}\|u_n\|_{\mathcal{H}_s}.
\end{equation}

On the other hand, by Lemma \ref{lem:2.7}, $u_n\to u$ strongly in $L^2(\mathbb{R}^2)\cap L^4(\mathbb{R}^2)$ and $u\in A_{t_s}$.
The fact $E_{N,s}(u_n)\to e_N(s)$ as $n\to\infty$  and $E_{N,s}(u)\geq e_N(s)$ yield that
\begin{equation}\label{eq:3.9}
\begin{split}
\lim_{n\to\infty}\|u_n\|_{\mathcal{H}_s}^2
&=\lim_{n\to\infty}\Big[2E_{N,s}(u_n)+\frac12\int_{\mathbb{R}^2}|u_n|^4\,dx\Big]\\
&=2e_N(s)+\frac12\int_{\mathbb{R}^2}|u|^4\,dx\\
&\leq 2E_{N,s}(u)+\frac12\int_{\mathbb{R}^2}|u|^4\,dx\\
&=\|u\|_{\mathcal{H}_s}^2.
\end{split}
\end{equation}
It follows  from \eqref{eq:3.8} and \eqref{eq:3.9} that $\lim_{n\to\infty}\|u_n\|_{\mathcal{H}_s}=\|u\|_{\mathcal{H}_s}$, thereby
$u_n\to u$ strongly in $\mathcal{H}_s$ as $n\to\infty$. So we have that $E_{N,s}(u_n)\to E_{N,s}(u)$ as $n\to\infty$. This implies  that $E_{N,s}(u)=e_N(s)$,
namely,  $u$ is a local minimizer of $E_{N,s}$ in $A_{t_s}$.
\end{proof}

\bigskip

Now, we are in position to prove Theorem \ref{thm:1}.\\
{\bf Proof of Theorem \ref{thm:1}.} By Lemmas \ref{lem:3.1} and \ref{lem:3.2},
there exists $\varepsilon_N>0$  such that for any $s\in (1-\varepsilon_N,1)$,  $E_{N,s}(u)$  admits a local  minimizer $u_s$ inside $A_{t_s}$. Hence, $u_s$
is a solution of \eqref{eq:1.12} for  $\mu_s\in \mathbb{R}$, where $\mu_s$  can be determined by \eqref{eq:1.12}.
Furthermore, in terms of Lemma \ref{lem:2.6}, $E_{N,s}(|u_s|)\leq E_{N,s}(u_s)$. Thus, we may assume that  $u_s$ is nonnegative. The proof is complete.
\quad $\Box$

\bigskip

In order to bound uniformly the local  minimizer $u_s$ in $s$ and show that it is  a ground state solution of \eqref{eq:1.12}, we need the following result.\\
\begin{Lemma}\label{lem:3.3}
For any $\varphi\in C_c^\infty(\mathbb{R}^n)$ with $n>1$, it holds that

$(i)$ $\lim_{s\to1_-}(-\Delta)^s\varphi=-\Delta \varphi$;

$(ii)$ As $s\to1_-$, $(-\Delta)^s \varphi \to -\Delta \varphi$ strongly in $L^2(\mathbb{R}^n)$.
\end{Lemma}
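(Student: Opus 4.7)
The plan is to argue entirely on the Fourier side, since the definition of $(-\Delta)^s$ given in the introduction is $(-\Delta)^s u = \mathcal{F}^{-1}(|\xi|^{2s}\mathcal{F}u)$. The crucial ingredient is that for $\varphi\in C_c^\infty(\mathbb{R}^n)$, the Fourier transform $\widehat{\varphi}$ is a Schwartz function, so $(1+|\xi|^m)|\widehat{\varphi}(\xi)|$ belongs to $L^1\cap L^\infty$ for every $m\ge 0$. Both statements then reduce to two applications of the dominated convergence theorem, the only real work being to produce an $s$-uniform dominating envelope for $|\xi|^{2s}$ as $s\to 1_-$.

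For part $(i)$, I would write
\begin{equation*}
(-\Delta)^s\varphi(x)-(-\Delta)\varphi(x)=\int_{\mathbb{R}^n} e^{2\pi i x\cdot\xi}\bigl(|\xi|^{2s}-|\xi|^2\bigr)\widehat{\varphi}(\xi)\,d\xi,
\end{equation*}
and note that $|\xi|^{2s}\to|\xi|^2$ pointwise. For $s\in(1/2,1)$, one has $|\xi|^{2s}\le 1+|\xi|^2$ by splitting into $|\xi|\le 1$ and $|\xi|\ge 1$, so the integrand is dominated in modulus by $(2+2|\xi|^2)|\widehat{\varphi}(\xi)|$, which lies in $L^1(\mathbb{R}^n)$ by the Schwartz property. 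Dominated convergence then yields the pointwise limit $(-\Delta)^s\varphi(x)\to -\Delta\varphi(x)$ for every $x\in\mathbb{R}^n$.

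For part $(ii)$, I would apply Plancherel's identity to obtain
\begin{equation*}
\bigl\|(-\Delta)^s\varphi-(-\Delta)\varphi\bigr\|_{L^2(\mathbb{R}^n)}^2=\int_{\mathbb{R}^n}\bigl(|\xi|^{2s}-|\xi|^2\bigr)^2|\widehat{\varphi}(\xi)|^2\,d\xi,
\end{equation*}
and again the integrand tends to zero pointwise. Using the same envelope $|\xi|^{2s}\le 1+|\xi|^2$ uniformly for $s\in(1/2,1)$, one bounds the integrand by $(2+2|\xi|^2)^2|\widehat{\varphi}(\xi)|^2$, which is an $L^1$ function by the Schwartz decay of $\widehat{\varphi}$. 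Dominated convergence delivers the $L^2$ convergence claimed in $(ii)$.

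The main (mild) obstacle is choosing an $s$-uniform majorant for the map $\xi\mapsto|\xi|^{2s}$ near $s=1$; this is resolved by separating the regions $|\xi|\le 1$ (where $|\xi|^{2s}\le 1$) and $|\xi|\ge 1$ (where $|\xi|^{2s}\le|\xi|^2$), after which the Schwartz decay of $\widehat{\varphi}$ makes both dominated convergence arguments automatic. No other properties of $C_c^\infty$, beyond the Schwartz regularity of the Fourier transform, are needed.
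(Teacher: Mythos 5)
Your proposal is correct; there is no gap. The only (cosmetic) quibble is the $e^{2\pi i x\cdot\xi}$ normalization in your inversion formula, which does not match the paper's convention $(-\Delta)^{s}u=\mathcal{F}^{-1}(|\xi|^{2s}\mathcal{F}u)$ exactly, but this has no bearing on the limiting argument.

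Your route differs from the paper's in two ways. For part $(i)$ the paper simply cites Proposition 4.4 of Di Nezza--Palatucci--Valdinoci, whereas you give a self-contained proof via the Fourier inversion representation of the pointwise values; this costs one extra dominated-convergence argument but makes the lemma independent of the reference. For part $(ii)$ the paper takes a two-step Hilbert-space route: it first shows $(-\Delta)^s\varphi\rightharpoonup-\Delta\varphi$ weakly in $L^2$ by testing against an arbitrary $\psi\in L^2$ (with the Young-inequality majorant $\frac12[(1+|\xi|^2)^2|\hat\varphi|^2+|\hat\psi|^2]$), then shows convergence of the $L^2$ norms, and concludes strong convergence from weak convergence plus norm convergence. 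You instead apply Plancherel directly to the difference and kill $\int(|\xi|^{2s}-|\xi|^2)^2|\hat\varphi|^2\,d\xi$ with a single dominated-convergence argument using the same uniform envelope $|\xi|^{2s}\le 1+|\xi|^2$ for $s\in(\tfrac12,1)$. Your version is shorter and avoids the weak-convergence detour; both arguments ultimately rest on the identical $s$-uniform majorant and the Schwartz decay of $\hat\varphi$.
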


\begin{proof}
The conclusion $(i)$ follows from Proposition 4.4 in \cite{Nezza2012}.

Now, we prove $(ii)$. For any $\psi\in L^2(\mathbb{R}^2)$ and $\varphi\in C_c^\infty(\mathbb{R}^n)$,  we derive from the
Young inequality that for $s\in(0,1)$,
\begin{equation}\label{eq:3.10}
|\xi|^s|\hat{\varphi}\hat{\psi}|\leq (1+|\xi|^2)|\hat{\varphi}\hat{\psi}|\leq \frac12[(1+|\xi|^2)^2|\hat{\varphi}|^2+|\hat{\psi}|^2]\in L^1(\mathbb{R}^n).
\end{equation}
Hence,  the dominated convergence theorem yields for $\psi\in L^2(\mathbb{R}^2)$ and $s\to 1_-$ that
\begin{equation}\label{eq:3.11}
\int_{\mathbb{R}^n}(-\Delta)^{s}\varphi\psi\,dx=\int_{\mathbb{R}^n}|\xi|^{2s}\hat{\varphi}\hat{\psi}\,dx
\to\int_{\mathbb{R}^n}|\xi|^2\hat{\varphi}\hat{\psi}\,dx=\int_{\mathbb{R}^n}(-\Delta\varphi)\psi\,dx.
\end{equation}
This means that $(-\Delta)^s \varphi \rightharpoonup -\Delta \varphi$ weakly in $L^2(\mathbb{R}^2)$, as $s\to1_-$.
Similarly, we can deduce that
\[
\lim_{s\to 1_-}\int_{\mathbb{R}^n}|(-\Delta)^{s}\varphi|^2\,dx=\int_{\mathbb{R}^n}|(-\Delta\varphi)|^2\,dx.
\]
Therefore, $(-\Delta)^s \varphi \to -\Delta \varphi$ strongly in $L^2(\mathbb{R}^n)$, as $s\to1_-$.
\end{proof}

\bigskip

{\bf Proof of Theorem \ref{thm:2}.}  Let  $u_s\in A_{t_s}$ be the local minimizer of $E_{N,s}$ obtained by Theorem \ref{thm:1}.

We first prove that $u_s$ is radially decreasing with respect to $x$. Denote by $u_s^*$  the Schwartz rearrangement  of $u_s$ with respect to $x$. By   Theorem A.1 in \cite{Frank-Seiringer2008},
\[
\int_{\mathbb{R}^2}|(-\Delta )^{\frac s2}u_s^*|^2\,dx\leq \int_{\mathbb{R}^2}|(-\Delta )^{\frac s2}u_s|^2\,dx
\]
and for $p\geq 2$,
\[
\int_{\mathbb{R}^2}|u_s^*|^p\,dx= \int_{\mathbb{R}^2}|u_s|^p\,dx.
\]
Lemma \ref{lem:2.8} yields that
\[
\int_{\mathbb{R}^2}|x|^2|u_s^*|^2\,dx\leq  \int_{\mathbb{R}^2}|x|^2|u_s|^2\,dx.
\]
Therefore,
\begin{equation}\label{eq:3.12}
E_{N,s}(u_s^*)\leq E_{N,s}(u_s).
\end{equation}
Since $u_s$ is a local minimizer of $e_s(N)$, we obtain that
\[
\int_{\mathbb{R}^2}|(-\Delta )^{\frac s2}u_s^*|^2\,dx+\int_{\mathbb{R}^2}|x|^2|u_s^*|^2\,dx= \int_{\mathbb{R}^2}|(-\Delta )^{\frac s2}u_s|^2\,dx+\int_{\mathbb{R}^2}|x|^2|u_s|^2\,dx.
\]
This implies that
\[
\int_{\mathbb{R}^2}|x|^2|u_s^*|^2\,dx=\int_{\mathbb{R}^2}|x|^2|u_s|^2\,dx.
\]
By Lemma \ref{lem:2.8}, we obtain $u_s=u_s^*$. This means that $u_s$ is radially decreasing with respect to $|x|$.

\bigskip

Next, we prove that the energy $E_{N,s}(u_s)$  is the smallest one among all critical points of $E_{N,s}(u)$ . It suffices to prove that  $E_{N,s}(v_s)= E_{N,s}(u_s)$ for any critical point $v_s$ satisfying $E_{N,s}(v_s)\leq E_{N,s}(u_s)$.

Since $v_s$ is a critical point of $E_{N,s}$ on $S(N)$, for any $\varphi\in \mathcal{H}_s$, $v_s$ satisfies \eqref{eq:1.17}, that is,
\begin{equation*}
\int_{\mathbb{R}^2}(-\Delta)^{\frac{s}2}v_s(-\Delta)^{\frac s2}\varphi\,dx+\int_{\mathbb{R}^2}|x|^2v_s\varphi\,dx
-\int_{\mathbb{R}^2}|v_s|^2v_s\varphi\,dx-\mu_s\int_{\mathbb{R}^2}v_s\varphi\,dx=0,
\end{equation*}
where
\[
\mu_s= \frac 1N\int_{\mathbb{R}^2}(|(-\Delta)^{\frac{s}2}v_s|^2 + x|^2|v_s|^2- |v_s|^4)\,dx.
\]
By Lemma \ref{lem:2.1}, we have
\begin{equation}\label{eq:3.13}
s\int_{\mathbb{R}^2}|(-\Delta)^{\frac{s}{2}}v_s|^2\,dx-\int_{\mathbb{R}^2}|x|^2|v_s|^2\,dx
-\frac12\int_{\mathbb{R}^2}|v_s|^4\,dx=0.
\end{equation}
Thus, by Lemma \ref{lem:2.4} and \eqref{eq:3.1} we infer that
\begin{equation}\label{eq:3.14a}
\begin{split}
2E_{N,s}(v_s)&=\int_{\mathbb{R}^2}\big(|(-\Delta)^{\frac s2}v_s|^2+|x|^2|v_s|^2\big)\,dx-\frac12\int_{\mathbb{R}^2}|v_s|^4\,dx\\
&=(1-s)\int_{\mathbb{R}^2}|(-\Delta)^{\frac s2}v_s|^2\,dx+2\int_{\mathbb{R}^2}|x|^2|v_s|^2\,dx\\
&\leq 2E_{N,s}(u_s)\leq C,
\end{split}
\end{equation}
and then
\begin{equation}\label{eq:3.14}
\int_{\mathbb{R}^2}|(-\Delta)^{\frac s2}v_s|^2\,dx\leq \frac C{1-s}<\frac12t_s
\end{equation}
as well as
\begin{equation}\label{eq:3.15}
\int_{\mathbb{R}^2}|x|^2|v_s|^2\,dx\leq \frac C2.
\end{equation}
 Therefore,   $v_s\in A_{t_s}$ implying
\[
E_{N,s}(v_s)\geq \inf_{u\in A_{t_s}}E_{N,s}(u)=E_{N,s}(u_s).
\]
This and the assumption $E_{N,s}(v_s)\leq E_{N,s}(u_s)$ yield $E_{N,s}(v_s)= E_{N,s}(u_s)$.

\bigskip

Finally, we build the bound in $(iii)$. We remark that $u_s$ also satisfies \eqref{eq:3.13}--\eqref{eq:3.15}. It is then sufficient to show that $\int_{\mathbb{R}^2}|(-\Delta)^{\frac s2}u_s|^2\,dx$ is uniformly bounded with respect to $s$ for $s$ close to $1$.

Were it not the case, there would exist $\{s_k\}$,  $s_k\to1_-$, such that
\begin{equation}\label{eq:3.16}
\int_{\mathbb{R}^2}|(-\Delta)^{\frac {s_k}2}u_{s_k}|^2\,dx\to+\infty
\end{equation}
as $k\to\infty$. Therefore, for some $K\in \mathbb{N}$, if $k>K$, we have
\begin{equation}\label{eq:3.17}
\int_{\mathbb{R}^2}|(-\Delta)^{\frac {s_k}2}u_{s_k}|^2\,dx>1.
\end{equation}
For simplicity, denote $u_k=u_{s_k}$. It follows  from \eqref{eq:3.15}, \eqref{eq:3.16} and   Lemma \ref{lem:2.1} that
\begin{equation}\label{eq:3.18}\lim_{k\to\infty}\frac{\int_{\mathbb{R}^2}|(-\Delta)^{\frac s2}u_k|^2\,dx}{\int_{\mathbb{R}^2}|u_k|^4\,dx}=\frac12.
\end{equation}
 Set $w_k=\eta_ku_k(\eta_kx)$ with $\eta_k=\|(-\Delta)^{\frac {s_k}2}u_k\|_2^{-\frac1{s_k}}$. We have that
\begin{equation}\label{eq:3.19}
\int_{\mathbb{R}^2}|(-\Delta)^{\frac {s_k}2}w_k|^2\,dx=\eta_k^{2s_k}\int_{\mathbb{R}^2}|(-\Delta)^{\frac {s_k}2}u_k|^2\,dx=1
\end{equation}
and
\[
\int_{\mathbb{R}^2}|w_k|^2\,dx=N.
\]
By \eqref{eq:3.17},  for $k>K$ there holds that
\begin{equation}\label{eq:3.20}
\eta_k^{2s_k-2}=\Big(\int_{\mathbb{R}^2}|(-\Delta)^{\frac s2}u_{s_k}|^2\,dx\Big)^{\frac{1-s_k}{s_k}}>1.
\end{equation}
However, \eqref{eq:3.14} yields that
\begin{equation}\label{eq:3.21}
\eta_k^{2s_k-2}=\Big(\int_{\mathbb{R}^2}|(-\Delta)^{\frac s2}u_{s_k}|^2\,dx\Big)^{\frac{1-s_k}{s_k}}\leq
\Big(\frac C{1-s_k}\Big)^{\frac{1-s_k}{s_k}}\to 1,\quad {\rm as} \quad k\to\infty.
\end{equation}
As a result,
\begin{equation}\label{eq:3.22}
\lim_{k\to\infty}\eta_k^{2s_k-2}=1.
\end{equation}
Therefore,
\begin{equation}\label{eq:3.23}
\begin{split}
\int_{\mathbb{R}^2}|w_k|^4\,dx&=\eta_k^2\int_{\mathbb{R}^2}|u_k|^4\,dx\\
&=\frac{\int_{\mathbb{R}^2}|u_k|^4\,dx}{\int_{\mathbb{R}^2}|(-\Delta)^{\frac s2}u_k|^2\,dx}\eta_k^{\frac{2(1-s_k)}{s_k}}\to 2
\end{split}
\end{equation}
 as $k\to\infty$.
By \eqref{eq:3.19} and the H\"{o}lder inequality, for $s_k>\frac34$ there holds
\begin{equation}\label{eq:3.24}
\begin{split}
\int_{\mathbb{R}^2}|(-\Delta)^{\frac {3}8}w_k|^2\,dx&=\int_{\mathbb{R}^2}(|\xi|^{\frac34}|\mathcal{F}w_k|^2\,dx\\
&\leq
\Big(\int_{\mathbb{R}^2}(|\xi|^{s_k}|\mathcal{F}w_k|^2\,dx\Big)^{\frac{3}{4s_k}}
\Big(\int_{\mathbb{R}^2}(\mathcal{F}w_k|^2\,dx\Big)^{1-\frac{3}{4s_k}}\\
&=
\Big(\int_{\mathbb{R}^2}|(-\Delta)^{\frac {s_k}2}w_k|^2\,dx\Big)^{\frac{3}{4s_k}}
\Big(\int_{\mathbb{R}^2}(|w_k|^2\,dx\Big)^{1-\frac{3}{4s_k}}\\
&=\Big(\int_{\mathbb{R}^2}(|w_k|^2\,dx\Big)^{1-\frac{3}{4s_k}}\\
&\leq (1+N)^{1-\frac{3}{4s_k}}\leq 1+N.
\end{split}
\end{equation}
By \eqref{eq:3.15},
\begin{equation}\label{eq:3.25}
\int_{\mathbb{R}^2}|x|^2|w_k|^2\,dx=\eta_k^{-2}\int_{\mathbb{R}^2}|x|^2|u_k|^2\,dx\leq C\eta_k^{-2}.
\end{equation}
Hence, we may assume that  $w_k\rightharpoonup w$ weakly in $H^{\frac34}(\mathbb{R}^2)$ and $w_k\to w$ strongly in $L_{loc}^p(\mathbb{R}^2)$ for any $2\leq p<8$ as $k\to\infty$. Apparently,
\begin{equation}\label{eq:3.26}
\int_{\mathbb{R}^2}|w|^2\,dx\leq \liminf_{k\to\infty}\int_{\mathbb{R}^2}|w_k|^2\,dx=N.
\end{equation}
Since the inclusion $H_r^{\frac34}(\mathbb{R}^2)\hookrightarrow L^p(\mathbb{R}^2)$ is compact, see Theorem $\Pi$.1 in \cite{Lions1982}, \eqref{eq:3.23} gives
us that
\[
\int_{\mathbb{R}^2}|w|^4\,dx=2.
\]
Moreover, we deduce from
\begin{equation}\label{eq:3.27}
(-\Delta)^{s_k}u_k+|x|^2u_k=|u_k|^2u_k+\mu_ku_k
\end{equation}
that
\begin{equation}\label{eq:3.28}
(-\Delta)^{s_k}w_k+\eta_k^{2s_k}|x|^2w_k=\eta_k^{2s_k-2}|w_k|^2w_k+\mu_k\eta_k^{2s_k}w_k.
\end{equation}
Using the Plancherel  theorem, we drive that $w_k\in \mathcal{H}_s$ is also a very
weak solution of  \eqref{eq:3.28}.
Namely, for any $\varphi\in C_c^\infty(\mathbb{R}^2)$, we have that
\begin{equation}\label{eq:3.29}
\int_{\mathbb{R}^2}w_k(-\Delta)^{s_k}\varphi\,dx+\eta_k^{2s_k}\int_{\mathbb{R}^2}|x|^2w_k\varphi\,dx
=\eta_k^{2s_k-2}\int_{\mathbb{R}^2}|w_k|^2w_k\varphi\,dx+\mu_k\eta_k^{2s_k}\int_{\mathbb{R}^2}w_k\varphi\,dx.
\end{equation}
Since
\begin{equation}\label{eq:3.30}
\mu_kN=\int_{\mathbb{R}^2}|(-\Delta)^{\frac {s_k}2}u_k|^2\,dx+\int_{\mathbb{R}^2}|x|^2|u_k|^2\,dx-\int_{\mathbb{R}^2}|u_k|^4\,dx,
\end{equation}
we get
\begin{equation}\label{eq:3.31}
\begin{split}
\mu_k\eta_k^{2s_k}N&=\mu_kN\|(-\Delta)^{\frac s2}u_k\|_2^{-2}\\
&=1+\frac{\int_{\mathbb{R}^2}|x|^2|u_k|^2\,dx}{\|(-\Delta)^{\frac s2}u_k\|_2^{2}}-\frac{\int_{\mathbb{R}^2}|u_k|^4\,dx}{\|(-\Delta)^{\frac s2}u_k\|_2^{2}}\\
&\to -1,\quad as \quad k\to\infty.
\end{split}
\end{equation}
Whence by  Lemma \ref{lem:3.3}, \eqref{eq:3.22} and \eqref{eq:3.31}, we see that $w$ is a very weak solution to
\begin{equation*}\label{lime}
-\Delta w+\frac1Nw=|w|^2w.
\end{equation*}
It is standard to verify that $w\in H^1(\mathbb{R}^2)$.
The scaled function $v=(N)^{-\frac12}w( N^{-\frac12}x)$ then satisfies
\begin{equation*}\label{lime1}
-\Delta v+v=|v|^2v.
\end{equation*}
By  the Gagliardo-Nirenberg inequality, \eqref{eq:3.26} and Lemma \ref{lem:2.5}, we obtain
\begin{equation*}
\frac12N^*\leq\frac{\int_{\mathbb{R}^2}|\nabla v|^2\,dx\int_{\mathbb{R}^2}|v|^2\,dx}{\int_{\mathbb{R}^2}| v|^4\,dx}=
\frac12\int_{\mathbb{R}^2}|v|^2\,dx=\frac12\int_{\mathbb{R}^2}|w|^2\,dx\leq\frac12N<\frac12N^*
\end{equation*}
a contradiction, which implies that our  assertion is valid. The proof is complete.\quad$\Box$

\bigskip

Finally, we prove that there does not exist  local  minimizers   of $E_{N,s}(u)$ in $A_{t_s}$ for $N>N^*$ and $s$ close to $1$.\\
{\bf Proof of Theorem \ref{thm:3}.} Let $N>N^*$. We will prove that there exists $\varepsilon_0>0$ such that for any $s\in(1-\varepsilon_0,1)$, the functional $E_{N,s}(u)$  admits no  positive minimizer $u_s$ in the interior of $A_{t_s}$.  Suppose on the contrary that for any $\varepsilon\in (0,1)$, there exists a $s\in(1-\varepsilon,1)$ such that  $E_{N,s}(u)$  admits a positive minimizer $u_s$ in the interior of $A_{t_s}$.
Then $u_s$ satisfies
\begin{equation}\label{eq:3.32}
(-\Delta)^su_s+|x|^2u_s=|u_s|^2u_s+\mu_su_s,
\end{equation}
where $\mu_s\in \mathbb{R}$ is the Lagrange multiplier. 
The Pohozaev identity in Lemma \ref{lem:2.1} yields that
\begin{equation}\label{eq:3.35}
s\int_{\mathbb{R}^2}|(-\Delta)^{\frac s2}u_s|^2\,dx-\int_{\mathbb{R}^2}|x|^2|u_s|^2\,dx-\frac12\int_{\mathbb{R}^2}|u_s|^4\,dx=0,
\end{equation}
which implies that
\begin{equation*}
\int_{\mathbb{R}^2}|x|^2|u_s|^2\,dx\leq s\int_{\mathbb{R}^2}|(-\Delta)^{\frac s2}u_s|^2\,dx\leq st_s.
\end{equation*}
We derive from the H\"{o}lder inequality that
\begin{equation}\label{eq:3.36}
\begin{split}
&\int_{\mathbb{R}^2}|x|^{2s}|u_s|^2\,dx\int_{\mathbb{R}^2}|(-\Delta)^{\frac s2}u_s|^2\,dx \\
&\leq
\Big(\int_{\mathbb{R}^2}|x|^{2}|u_s|^2\,dx\Big)^s\Big(\int_{\mathbb{R}^2}|u_s|^2\,dx\Big)^{1-s}\int_{\mathbb{R}^2}|(-\Delta)^{\frac s2}u_s|^2\,dx \\
&\leq s^st_s^{1+s}N^{1-s}\\
&=s^s(\frac{N}{2s-1})^s\Big(\frac{N_s^*}{N}\Big)^{\frac{s^2}{1-s}}N^{1-s}.
\end{split}
\end{equation}
We can choose $\varepsilon>0$ small enough so that for any $s\in(1-\varepsilon,1)$,
\begin{equation}\label{eq:3.37}
4^{-s}s^s(\frac{N}{2s-1})^s\Big(\frac{N_s^*}{N}\Big)^{\frac{s^2}{1-s}}(1-s)^{-2}N^{1-s}< (1-e^{-1})^2(2e)^{-2},
\end{equation}
which is guaranteed by $N>N^*$ and  $N_s^*\to N^*$ as $s\to1_-$.

On the other hand, by the Hardy inequality, see Lemma 3.2 in \cite{Frank2008}, and the H\"{o}lder inequality, we deduce that
\begin{equation}\label{eq:3.38}
\begin{split}
\int_{\mathbb{R}^2}|x|^{2s}|u_s|^2\,dx\int_{\mathbb{R}^2}|(-\Delta)^{\frac s2}u_s|^2\,dx &\geq C_s\int_{\mathbb{R}^2}|x|^{2s}|u_s|^2\,dx\int_{\mathbb{R}^2}\frac{|u_s|^2}{|x|^{2s}}\,dx\\
&\geq C_s(\int_{\mathbb{R}^2}|u_s|^2\,dx)^2\\
&=C_sN^2,
\end{split}
\end{equation}
where
\[
C_s=4^s\Gamma\Big(\frac{1+s}{2}\Big)^2\Gamma\Big(\frac{1-s}{2}\Big)^{-2}.
\]
Since, for $0<x<1$,
\begin{equation}\label{eq:3.39}
\begin{split}
\Gamma(x)&=\int_0^{+\infty}t^{x-1}e^{-t}\,dt\\
&\leq\int_0^{1}t^{x-1}e^{-t}\,dt+\int_1^{+\infty}e^{-t}\,dt\\
&\leq\frac{1}{x}+e^{-1}\leq\frac{e}{x}
\end{split}
\end{equation}
and
\begin{equation}\label{eq:3.40}
\begin{split}
\Gamma(x)\geq\int_0^{1}e^{-t}\,dt=1-e^{-1},
\end{split}
\end{equation}
we find that
\begin{equation}\label{eq:3.41}
\begin{split}
\int_{\mathbb{R}^2}|x|^{2s}|u_s|^2\,dx\int_{\mathbb{R}^2}|(-\Delta)^{\frac s2}u_s|^2\,dx \geq 4^s(1-e^{-1})^2(2e)^{-2}(1-s)^2.
\end{split}
\end{equation}
Eventually, equations \eqref{eq:3.36} and \eqref{eq:3.41} yield
\begin{equation*}
4^{-s}s^s(\frac{N}{2s-1})^s\Big(\frac{N_s^*}{N}\Big)^{\frac{s^2}{1-s}}(1-s)^{-2}N^{1-s}\geq (1-e^{-1})^2(2e)^{-2},
\end{equation*}
a contradiction to \eqref{eq:3.37}.  The assertion follows.   \qquad $\Box$

\bigskip

\bigskip

\section{Mountain pass solution}

\bigskip

In this section we prove the existence of mountain pass  solutions of \eqref{eq:1.12}.

\begin{Lemma}\label{lem:4.1} Let $0<N<N^*$.
For $s<1$ close to $1$, there exist nonnegative $\phi_1$ and $\phi_2$ in  $S(N)$  such that
\[
c_s(N):=\inf_{g\in \Gamma_s}\max_{t\in[0,1]}E_{N,s}(g(t))>\max\{E_{N,s}(\phi_1)),E_{N,s}(\phi_2))\},
\]
where $\Gamma_s=\{g\in C([0,1],S(N)):g(0)=\phi_1,g(1)=\phi_2\}$.
\end{Lemma}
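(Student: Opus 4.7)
The plan is to produce the two mountain pass endpoints on opposite sides of $\partial A_{t_s}$ and exploit the energy barrier furnished by Lemma \ref{lem:3.1}.

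First I would take $\phi_1 = u_s$, the nonnegative local minimizer inside $A_{t_s}$ produced by Theorem \ref{thm:1}, so that $E_{N,s}(\phi_1) = e_N(s) \leq C$, where $C$ is the $s$-independent constant from Lemma \ref{lem:3.1}. For the second endpoint I would fix once and for all a nonnegative $\varphi \in C_c^\infty(\mathbb{R}^2)$ with $\|\varphi\|_2^2 = N$, and consider the $L^2$-preserving rescaling $\varphi_\lambda(x) := \lambda \varphi(\lambda x)$, which lies in $S(N)$. A direct computation, already recorded in the introduction, gives
\[
E_{N,s}(\varphi_\lambda) = \frac{\lambda^{2s}}{2}\int_{\mathbb{R}^2}|(-\Delta)^{s/2}\varphi|^2\,dx + \frac{1}{2\lambda^{2}}\int_{\mathbb{R}^2}|x|^2\varphi^2\,dx - \frac{\lambda^{2}}{4}\int_{\mathbb{R}^2}\varphi^4\,dx.
\]
Because $s < 1$, the negative $\lambda^2$-term eventually dominates the positive $\lambda^{2s}$-term, so $E_{N,s}(\varphi_\lambda) \to -\infty$ as $\lambda \to +\infty$, while at the same time $\|(-\Delta)^{s/2}\varphi_\lambda\|_2^2 = \lambda^{2s}\|(-\Delta)^{s/2}\varphi\|_2^2 \to +\infty$. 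I can therefore choose $\lambda_s$ large enough so that simultaneously $\varphi_{\lambda_s} \notin A_{t_s}$ and $E_{N,s}(\varphi_{\lambda_s}) < e_N(s)$, and set $\phi_2 := \varphi_{\lambda_s} \geq 0$.

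With these endpoints in hand, the mountain pass comparison reduces to a crossing argument. For any $g \in \Gamma_s$ the real-valued map $t \mapsto \|(-\Delta)^{s/2} g(t)\|_2^2$ is continuous on $[0,1]$ (because $g$ is continuous into $\mathcal{H}_s$), strictly below $t_s$ at $t=0$, and strictly above $t_s$ at $t=1$. By the intermediate value theorem there exists $t^* \in (0,1)$ with $g(t^*) \in \partial A_{t_s}$, so Lemma \ref{lem:3.1} delivers
\[
\max_{t \in [0,1]} E_{N,s}(g(t)) \geq E_{N,s}(g(t^*)) \geq \inf_{u \in \partial A_{t_s}} E_{N,s}(u) > C \geq e_N(s) = \max\{E_{N,s}(\phi_1), E_{N,s}(\phi_2)\}.
\]
Taking the infimum over $\Gamma_s$ yields the required strict inequality $c_s(N) > \max\{E_{N,s}(\phi_1), E_{N,s}(\phi_2)\}$.

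The only delicate point is the selection of $\lambda_s$: because the two exponents $\lambda^{2s}$ and $\lambda^{2}$ become comparable as $s \to 1_-$, the parameter $\lambda_s$ is generally forced to blow up, and hence $\|(-\Delta)^{s/2}\phi_2\|_2^2 \to \infty$ in this limit. This is harmless for Lemma \ref{lem:4.1} itself, since the statement permits $\phi_1, \phi_2$ to depend on $s$, but it foreshadows the divergence of the mountain pass solution $v_s$ recorded later in Theorem \ref{thm:6}.
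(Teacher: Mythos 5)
Your proof is correct, but it takes a genuinely different route from the paper's. The paper builds \emph{both} endpoints from the normalized fractional ground state: it sets $\varphi_s=\sqrt{N}\,\|Q_s\|_2^{-1}Q_s$ and uses the mass-preserving dilations $\varphi_s^{t}=t\varphi_s(tx)$ with $\tau_1=1$ (inside $A_{t_s}$, energy bounded by an $s$-independent constant via the decay estimates of Lemma \ref{lem:2.3}) and $\tau_2=2^{(1-s)^{-2}}$ (outside $A_{t_s}$, negative energy), then runs the same crossing argument through $\partial A_{t_s}$ that you use. Your choice of $\phi_1=u_s$ (the local minimizer from Theorem \ref{thm:1}) and $\phi_2=\varphi_{\lambda_s}$ (a blown-up compactly supported profile) is simpler and avoids the $Q_s$-estimates entirely; all the ingredients you invoke (strict interiority of $u_s$, the scaling identities, the barrier from Lemma \ref{lem:3.1}, the intermediate value theorem applied to $t\mapsto\|(-\Delta)^{s/2}g(t)\|_2^2$) are sound, and the minor points — that $e_N(s)\ge 0$ so that $E_{N,s}(\varphi_{\lambda_s})<0\le e_N(s)$ suffices, and that $u_s\notin\partial A_{t_s}$ because its energy is strictly below the boundary infimum — check out. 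The one thing worth flagging is that $c_s(N)$ depends on the chosen endpoints, and the paper's specific choice is not cosmetic: the explicit path $t\mapsto\varphi_s^{(1-t)\tau_1+t\tau_2}$ through rescaled ground states is exactly what yields the sharp upper bound $c_s\le \tilde c_s+O(\rho_0^{-2})$ in Lemma \ref{lem:5.1}, which drives the blow-up rate in Theorem \ref{thm:6}. So your construction proves Lemma \ref{lem:4.1} as stated, but the subsequent asymptotic analysis would need to be revisited (e.g., by inserting the paper's $Q_s$-path as a competitor, which is admissible only if its endpoints coincide with yours or if one first shows the mountain pass level is insensitive to the choice of endpoints in the valley).
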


\begin{proof} The proof lies on finding proper $\phi_1$ and $\phi_2$ such that, each path connecting $\phi_1$ and $\phi_2$ intersects $\partial A_{t_s}$.

Let $\varphi_s(x)=\sqrt{N}\|Q_s\|_2^{-1}Q_s(x)$ and $\varphi_s^t=t\varphi_s(tx)$.  Then, $\varphi_s^t\in S(N)$. By Lemma \ref{lem:2.5}, we have
\begin{equation}\label{eq:4.1}
\int_{\mathbb{R}^2}|(-\Delta)^{\frac s2}\varphi_s^t|^2\,dx=\frac{N}{2s-1}t^{2s}
\end{equation}
and
\begin{equation}\label{eq:4.2}
\begin{split}
E_{N,s}(\varphi_s^t)&=\frac12t^{2s}\int_{\mathbb{R}^2}|(-\Delta)^{\frac s2}\varphi_s|^2\,dx+
\frac12t^{-2}\int_{\mathbb{R}^2}|x|^2|\varphi_s|^2\,dx-\frac14t^2\int_{\mathbb{R}^2}|\varphi_s|^4\,dx\\
&=\frac{N}{2(2s-1)}t^{2s}+\frac{N}{2N_s^*}t^{-2}\int_{\mathbb{R}^2}|x|^2|Q_s|^2\,dx-\frac{sN^2}{2(2s-1)N_s^*}t^{2}.
\end{split}
\end{equation}
Using Lemma \ref{lem:2.3}, we get
\begin{equation}\label{eq:4.3}
\begin{split}
\int_{\mathbb{R}^2}|x|^2|Q_s|^2\,dx&\leq \int_{\{|x|\leq 1\}}|Q_s|^2\,dx+\int_{\{|x|\geq 1\}}|x|^2|Q_s|^2\,dx\\
&\leq N_s^*+\int_{\{|x|\geq 1\}}C_4Q_s\,dx\\
&\leq N_s^*+\int_{\mathbb{R}^2}C_4Q_s\,dx\leq N_s^*+C_3C_4.
\end{split}
\end{equation}
By \eqref{eq:4.2} and \eqref{eq:4.3},
\begin{equation*}
E_{N,s}(\varphi_s^t)\leq \frac{N}{2(2s-1)}t^{2s}+\frac{N}{2N_s^*}t^{-2}(N_s^*+C_3C_4)-\frac{sN^2}{2(2s-1)N_s^*}t^{2}.
\end{equation*}
By Lemma \ref{lem:2.5},  $N^*_s\to N^*$ as $s\to1_-$,  there exists $\eta_k>0$
such that if $s\in (1-\eta_k,1)$, we have
\begin{equation*}
E_{N,s}(\varphi_s^t)\leq Nt^{2s}+\frac{N}{N^*}(N^*+C_3C_4)t^{-2}-\frac{N^2}{4N^*}t^{2}.
\end{equation*}
Choose $\tau_1=1$ and $\tau_2=2^{(1-s)^{-2}}$. Observing that $t_s=\frac{N}{2s-1}\Big(\frac{N_s^*}{N}\Big)^{\frac{s}{1-s}}\to+\infty$ for $0<N<N^*$ and $s\to1_-$, we infer that
 \begin{equation}\label{eq:4.4}
 E_{N,s}(\varphi_s^{\tau_1})\leq N+\frac{N}{N^*}(N^*+C_3C_4)-\frac{N^2}{4N^*}<\frac{N(1-s)}{2s-1}\Big(\frac{N_s^*}{N}\Big)^{\frac{s}{1-s}}
\end{equation}
and
\begin{equation*}\label{eq:4.5}
\int_{\mathbb{R}^2}|(-\Delta)^{\frac s2}\varphi_s^{\tau_1}|^2\,dx=\frac{N}{2s-1}<t_s.
\end{equation*}
Furthermore, since
\begin{equation*}
 \limsup_{s\to1_-}E_{N,s}(\varphi_s^{\tau_2})\to-\infty
\end{equation*}
and
\begin{equation*}
 \lim_{s\to1_-}\frac{2^{\frac{2s}{(1-s)^2}}}{\Big(\frac{N_s^*}{N}\Big)^{\frac{s}{1-s}}}=+\infty,
\end{equation*}
we have, for $s<1$ and $s$ close to $1$,  that $E_{N,s}(\varphi_s^{\tau_2})<0$ and
\begin{equation*}
\int_{\mathbb{R}^2}|(-\Delta)^{\frac s2}\varphi_s^{\tau_2}|^2\,dx=\frac{N}{2s-1}\tau_2^{2s}=\frac{N}{2s-1}2^{\frac{2s}{(1-s)^2}}>\frac{N}{2s-1}\Big(\frac{N_s^*}{N}\Big)^{\frac{s}{1-s}}=t_s.
\end{equation*}
Choose $\phi_1=\varphi_s^{\tau_1}$ and $\phi_2=\varphi_s^{\tau_2}$. For any $g\in \Gamma_s$, \eqref{eq:3.3} yields that
\begin{equation}\label{eq:4.5}
\max_{t\in[0,1]}E_{N,s}(g(t))\geq \inf_{u\in \partial A_{t_s}}E_{N,s}(u)\geq \frac{N(1-s)}{2(2s-1)}\Big(\frac{N_s^*}{N}\Big)^{\frac{s}{1-s}}.
\end{equation}
 The conclusion immediately follows from  \eqref{eq:4.4} and $E_{N,s}(\varphi_s^{\tau_2})<0$.
\end{proof}

\bigskip

\begin{Remark}\label{rem:4.1}
Inequality \eqref{eq:4.5} implies that the mountain pass value $c_s\geq \frac{1-s}{2}t_s$ if $s<1$ and $s$ close to $1$.
\end{Remark}

\bigskip

\begin{Lemma}\label{lem:4.2} Let $0<N<N^*$. For $s<1$ close to $1$, there exist a bounded sequence $\{u_n\}\subset S(N)$, and  a nonnegative sequence $\{v_n\}\subset S(N)$, which is radially symmetric and  decreasing with respect to $|x|$ such that,
\begin{equation}\label{eq:4.6}
E_{N,s}(u_n)\to c_s,
\end{equation}

\begin{equation}\label{eq:4.7}
\|E'_{N,s}|_{S(N)}(u_n)\|_{\mathcal{H}_s^{-1}}\to 0,
\end{equation}

\begin{equation}\label{eq:4.8}
P_{N,s}(u_n):=s\int_{\mathbb{R}^2}|(-\Delta)^{\frac s 2}u_n|^2\,dx-\int_{\mathbb{R}^2}|x|^2|u_n|^2\,dx-\frac12\int_{\mathbb{R}^2}|u_n|^4\,dx\to 0
\end{equation}
as well as
\begin{equation}\label{eq:4.9}
\|u_n-v_n\|_{\mathcal{H}_s}\to0
\end{equation}
as $n\to \infty$, where $\mathcal{H}_s^{-1}$ is the dual space of $\mathcal{H}_s$.
\end{Lemma}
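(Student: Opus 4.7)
The plan is to deploy Jeanjean's auxiliary-functional technique, adapted to the fractional harmonic setting, in order to produce a bounded Palais--Smale-type sequence $\{u_n\}\subset S(N)$ for $E_{N,s}$ at level $c_s$ that satisfies the Pohozaev identity asymptotically, and then to extract a nearby radial decreasing sequence $\{v_n\}$ by Schwartz rearrangement. I introduce on $\mathbb{R}\times S(N)$ the $L^2$-preserving scaling $H(\theta,u)(x):=e^{\theta}u(e^{\theta}x)$ and the augmented functional
\begin{equation*}
\tilde E(\theta,u):=E_{N,s}(H(\theta,u))=\frac{e^{2s\theta}}{2}\int_{\mathbb{R}^2}|(-\Delta)^{s/2}u|^2\,dx+\frac{e^{-2\theta}}{2}\int_{\mathbb{R}^2}|x|^2|u|^2\,dx-\frac{e^{2\theta}}{4}\int_{\mathbb{R}^2}|u|^4\,dx,
\end{equation*}
so that $\partial_\theta\tilde E(\theta,u)\vert_{\theta=0}=P_{N,s}(u)$. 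Setting $\tilde\Gamma_s:=\{\tilde g\in C([0,1],\mathbb{R}\times S(N)):\tilde g(0)=(0,\phi_1),\ \tilde g(1)=(0,\phi_2)\}$ and $\tilde c_s:=\inf_{\tilde g\in\tilde\Gamma_s}\max_{t\in[0,1]}\tilde E(\tilde g(t))$, the bijectivity of each $H(\theta,\cdot):S(N)\to S(N)$ immediately yields $\tilde c_s=c_s$.

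Next, the minimax principle (a quantitative deformation lemma combined with Ekeland's variational principle) applied to $\tilde E$ on the Hilbert manifold $\mathbb{R}\times S(N)$ produces a sequence $(\theta_n,w_n)$ with $\tilde E(\theta_n,w_n)\to c_s$, $\partial_\theta\tilde E(\theta_n,w_n)\to 0$, and $\|\partial_u\tilde E(\theta_n,w_n)\|_{(T_{w_n}S(N))^*}\to 0$. Setting $u_n:=H(\theta_n,w_n)\in S(N)$ and unfolding the scaling translates these three conditions into \eqref{eq:4.6}, \eqref{eq:4.8} and \eqref{eq:4.7} respectively. Boundedness of $\{u_n\}$ in $\mathcal{H}_s$ then follows at once by combining \eqref{eq:4.6} and \eqref{eq:4.8}: since
\begin{equation*}
2E_{N,s}(u_n)-P_{N,s}(u_n)=(1-s)\int_{\mathbb{R}^2}|(-\Delta)^{s/2}u_n|^2\,dx+2\int_{\mathbb{R}^2}|x|^2|u_n|^2\,dx\longrightarrow 2c_s,
\end{equation*}
both the kinetic norm and the weighted $L^2$ norm remain uniformly controlled (recall $s$ is close to $1$).

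Finally, for \eqref{eq:4.9} I exploit that the endpoints $\phi_1,\phi_2$ produced in Lemma \ref{lem:4.1} are themselves radially symmetric decreasing, and that Lemma \ref{lem:2.8} combined with Theorem A.1 of \cite{Frank-Seiringer2008} yields $E_{N,s}(u^*)\leq E_{N,s}(u)$ for every $u\in S(N)$. Consequently $c_s$ is also attained over paths whose images lie in the radial decreasing cone, and an equivariant version of the deformation argument (starting from a nearly optimal symmetrized path) produces $v_n=v_n^*$ satisfying $E_{N,s}(v_n)\to c_s$ and $\|u_n-v_n\|_{\mathcal{H}_s}\to 0$. The main obstacle will lie precisely here: Schwartz rearrangement is continuous in $L^p$ but not in $\mathcal{H}_s$ in general, so symmetrized paths need not be continuous in the ambient norm. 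This must be overcome either by a density/approximation argument based on quantitative stability of the fractional Polya--Szego inequality, or by modifying the deformation flow so that it respects rearrangement, leveraging that the $L^4$ term is invariant and the kinetic and potential terms are monotone decreasing under $u\mapsto u^*$.
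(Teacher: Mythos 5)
Your construction of $\{u_n\}$ is essentially the paper's: the augmented functional $\tilde E(\theta,u)=E_{N,s}(e^{\theta}u(e^{\theta}\cdot))$ on $\mathbb{R}\times S(N)$, the identity $\tilde c_s=c_s$, a Palais--Smale sequence $(\theta_n,w_n)$ from the minimax principle, the unfolding $u_n=H(\theta_n,w_n)$ yielding \eqref{eq:4.6}--\eqref{eq:4.8} by testing in the $\theta$-direction and in tangential directions, and boundedness from the combination $2E_{N,s}(u_n)-P_{N,s}(u_n)$. All of that is correct and matches the paper.

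The gap is in \eqref{eq:4.9}, and you have half-identified it yourself. You do not need an ``equivariant deformation flow'' or quantitative stability of the fractional Polya--Szego inequality: the version of the minimax principle used here (Theorem 3.2 of \cite{Ghoussoub1993}, or Lemma 2.3 of \cite{Jeanjean1997}) already contains a localization clause --- the Palais--Smale sequence can be chosen with $\mathrm{dist}\big((\theta_n,w_n),\{0\}\times|h_n|^*([0,1])\big)\to0$, where $\{h_n\}$ is any sequence of almost-optimal paths. Replacing $h_n$ first by $|h_n|$ and then by the rearrangement $|h_n|^*$ does not increase the maximum of the energy (Theorem A.1 of \cite{Frank-Seiringer2008} together with Lemma \ref{lem:2.8}), so the symmetrized paths remain almost optimal, and the localization clause hands you $\tau_n\in[0,1]$ with $\theta_n\to0$ and $\|w_n-|h_n|^*(\tau_n)\|_{\mathcal{H}_s}\to0$; then $v_n:=H(\theta_n,|h_n|^*(\tau_n))$ is nonnegative, radial, decreasing, and satisfies \eqref{eq:4.9}. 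Your proposal instead stops at listing two candidate strategies without executing either, so as written the proof of \eqref{eq:4.9} is incomplete. That said, the obstacle you flag --- continuity of $t\mapsto|h_n(t)|^*$ in $\mathcal{H}_s$, which is required for $|h_n|^*$ to be an admissible path and which is known to fail for Schwarz rearrangement in $H^1(\mathbb{R}^2)$ in general (Almgren--Lieb) --- is genuinely present; the paper asserts $|h_n|^*\in\Gamma_s$ with the single word ``apparently,'' so on this point your proposal is more candid than the published argument, but neither text resolves it.
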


\bigskip

\begin{proof}
Denote by $Y$ the space $\mathcal{H}_s\times\mathbb{R}$ with the norm
\[
\|\cdot\|_Y^2:=\|\cdot\|_{\mathcal{H}_s}^2+\|\cdot\|_{\mathbb{R}}^2.
\]
Let $H(u,t)=e^tu(e^tx)$ and define the functional $\mathcal{E}_{N,s}(u,t):Y\to \mathbb{R}$ by
\begin{equation}\label{eq:4.10}
\begin{split}
\mathcal{E}_s(u,t)&:=E_{N,s}(H(u,t))\\
&=\frac12e^{2st}\int_{\mathbb{R}^2}|(-\Delta)^{\frac s2}u|^2\,dx+\frac 12e^{-2t}\int_{\mathbb{R}^2}|x|^2|u|^2\,dx
-\frac14e^{2t}\int_{\mathbb{R}^2}|u|^4\,dx.
\end{split}
\end{equation}

Using $\phi_1$, $\phi_2$ found in Lemma \ref{lem:4.1}, we define
\[
\mathcal{P}_s=\{\gamma\in C([0,1],S(N)\times \mathbb{R}):\gamma(0)=(\phi_1,0),\gamma(1)=(\phi_2,0)\}
\]
and
\[
b_s=\inf_{\gamma\in \mathcal{P}_s}\max_{t\in[0,1]}\mathcal{E}_s(\gamma(t)).
\]

Since $H(\mathcal{P}_s)\subset \Gamma_s$, and for any $g\in \Gamma_s$, we have $(g,0)\in \mathcal{P}_s$  and
$E_{N,s}(g)=\mathcal{E}_s(g,0)$. Therefore, $b_s=c_s$. By Lemma \ref{lem:4.1}, we have
\[
b_s>\max\{\mathcal{E}_s(\gamma(0)),\mathcal{E}_s(\gamma(1))\}.
\]
Moreover,  Lemma \ref{lem:4.1} allows us to find  a sequence of paths $\{h_n\}\subset \Gamma_s$ such that
\[
\lim_{n\to\infty}\max_{t\in[0,1]}E_{N,s}(h_n(t))=c_s=b_s.
\]
Obviously, $|h_n|\in \Gamma_s$ and
$$
c_s\leq \max_{t\in[0,1]}E_{N,s}(|h_n|(t))\leq \max_{t\in[0,1]}E_{N,s}(h_n(t)).
$$
Let $|h_n|^*$ be  the Schwartz rearrangement of $|h_n|$ with respect to $x$. Apparently, $|h_n|^*\in \Gamma_s$. Arguing as  \eqref{eq:3.12}, we have
\[
c_s\leq\max_{t\in[0,1]}E_{N,s}(|h_n|^*(t))\leq \max_{t\in[0,1]}E_{N,s}(|h_n|(t)).
\]
Hence
\begin{equation}\label{eq:4.11}
\lim_{n\to\infty}\max_{t\in[0,1]}E_{N,s}(|h_n|^*(t))=c_s=b_s
\end{equation}
and
\[
\lim_{n\to\infty}\max_{t\in[0,1]}\mathcal{E}_{s}(|h_n|^*(t),0)=\lim_{n\to\infty}\max_{t\in[0,1]}E_{N,s}(|h_n|^*(t))=c_s=b_s.
\]
By Theorem 3.2 in \cite{Ghoussoub1993} or Lemma 2.3 in \cite{Jeanjean1997}, there exists $\{(w_n,t_n)\}\subset S(N)\times \mathbb{R}$ such that
\begin{equation}\label{eq:4.12}
\lim_{n\to \infty}\mathcal{E}_{s}(w_n,t_n)=b_s;
\end{equation}
\begin{equation}\label{eq:4.13}
\|\mathcal{E}_{s}'|_{S(N)\times \mathbb{R}}(w_n,t_n)\|_{Y^{-1}}\to 0
\end{equation}
and
\begin{equation}\label{eq:4.14}
\lim_{n\to \infty}dist((w_n,t_n),(|h_n|^*(t),0))=0.
\end{equation}
Let $u_n=H(w_n,t_n)$. We see from  \eqref{eq:4.12} and $b_s=c_s$ that
\begin{equation}\label{eq:4.14a}
E_{N,s}(u_n)\to c_s
\end{equation}
as $n\to \infty$. Equation \eqref{eq:4.13} yields that
\begin{equation}\label{eq:4.15}
\langle\mathcal{E}_{s}'(w_n,t_n),z\rangle_{Y^{-1},Y}=\langle\mathcal{E}_{s}'|_{S(N)\times \mathbb{R}}(w_n,t_n),z\rangle_{Y^{-1},Y}=o(\|z\|_{Y}),
\end{equation}
for all $z\in T_{(w_n,t_n)}(S(N)\times \mathbb{R}):=\{(z_1,z_2)\in Y:(w_n,z_1)_2=0\}$, where $(\cdot,\cdot)_2$ is the inner product of $L^2(\mathbb{R}^2)$.

Choosing $z=(0,1)$ in \eqref{eq:4.15}, we deduce that
\begin{equation}\label{eq:4.16}
\begin{split}
\quad &\langle\mathcal{E}_{s}'(w_n,t_n),(0,1)\rangle_{Y^{-1},Y}\\
&=\frac d{dt}\mathcal{E}_{s}(w_n,t_n+t)|_{t=0}\\
&=se^{2st_n}\int_{\mathbb{R}^2}|(-\Delta)^{\frac s2}w_n|^2\,dx-e^{2t_n}\int_{\mathbb{R}^2} |x|^2|w_n|^2\,dx
-\frac12e^{2t_n}\int_{\mathbb{R}^2} |w_n|^4\,dx\\
&=s\int_{\mathbb{R}^2}|(-\Delta)^{\frac s2}u_n|^2\,dx-\int_{\mathbb{R}^2} |x|^2|u_n|^2\,dx
-\frac12\int_{\mathbb{R}^2} |u_n|^4\,dx\\
&=P_{N,s}(u_n)\to0
\end{split}
\end{equation}
as $n\to \infty$.

Now, by \eqref{eq:4.14}, $t_n\to 0$ as $n\to\infty$, and there exists $\tau_n\in[0,1]$  such that
\[
\lim_{n\to \infty}\|w_n-|h_n|^*(\tau_n)\|_{\mathcal{H}_s}=0
\]
Hence,
$$
\|u_n-v_n\|_{\mathcal{H}_s}^2\to 0
$$
 as $n\to \infty$, where
$v_n=H(|h_n|^*(\tau_n),t_n)\geq0$ and $v_n\in H_r^{s}(\mathbb{R}^2)$.

For any $\varphi\in T_{u_n}(S(N)):=\{u\in \mathcal{H}_s, (u,\varphi)_2=0\}$, the function $\psi_n=e^{-t_n}\varphi(e^{-t_n}x)$ belongs to
$T_{(w_n,t_n)(S(N)\times \mathbb{R})}$. Indeed,
\begin{equation*}
(w_n,\psi_n)_2=\int_{\mathbb{R}^2}w_n(x)\psi_n(x)\,dx=\int_{\mathbb{R}^2}e^{t_n}w_n(e^{t_n}x)\varphi(x)\,dx=(u_n,\varphi)_2=0.
\end{equation*}
Therefore,  for any $\varphi\in T_{u_n}(S(N))$, we derive from \eqref{eq:4.12} and \eqref{eq:4.14} that,
\begin{equation}\label{eq:4.17}
\begin{split}
\langle E'_{N,s}|_{S(N)}(u_n),\varphi\rangle_{\mathcal{H}_s^{-1},\mathcal{H}_s}&=\langle E'_{N,s}(u_n),\varphi\rangle_{\mathcal{H}_s^{-1},\mathcal{H}_s}\\
&=\frac{d}{dt}E_{N,s}(u_n+t\varphi)|_{t=0}\\
&=\frac{d}{dt}E_{N,s}(e^{t_n}w_n(e^{t_n}x)+te^{t_n}\psi_n(e^{t_n}x))\Big|_{t=0}\\
&=\frac{d}{dt}\mathcal{E}_{s}(w_n+t\psi_n,t_n)|_{t=0}\\
&=\langle\mathcal{E}'_{s}(w_n,t_n),(\psi_n,0)\rangle=o(\|\psi_n\|_{\mathcal{H}_s}^2).
\end{split}
\end{equation}
Since $t_n\to 0$ as $n\to\infty$,  for any $\varphi\in T_{u_n}(S(N))$ we have
\begin{equation}\label{eq:4.18}
\langle E'_{N,s}|_{S(N)}(u_n),\varphi\rangle_{\mathcal{H}_s^{-1},\mathcal{H}_s}=\langle E'_{N,s}(u_n),\varphi\rangle_{\mathcal{H}_s^{-1},\mathcal{H}_s}=o(\|\varphi\|_{\mathcal{H}_s}^2),
\end{equation}
that is, $\|E'_{N,s}|_{S(N)}(u_n)\|_{\mathcal{H}_s^{-1}}\to0$, as $n\to\infty$.

Finally we prove the boundedness of $\{u_n\}$ in $\mathcal{H}_s$. By \eqref{eq:4.14a} and \eqref{eq:4.16},
\begin{equation}\label{eq:3.14a}
\begin{split}
(1-s)\int_{\mathbb{R}^2}|(-\Delta)^{\frac s2}u_n|^2\,dx+2\int_{\mathbb{R}^2}|x|^2|u_n|^2\,dx=
2E_{N,s}(u_n)-F_{N,s}(u_n)\to c_s,\quad as \quad n\to\infty.
\end{split}
\end{equation}
This immediately yields  the boundedness of $\{u_n\}$ in $\mathcal{H}_s$.

The proof is complete.

\end{proof}

\bigskip

{\bf Proof of Theorem \ref{thm:4}.} By Lemma \ref{lem:4.2}, there exists a bounded sequence $\{u_n\}\in S(N)$ verifying \eqref{eq:4.6}--\eqref{eq:4.9}.
So we may assume that $u_n\rightharpoonup v_s$ weakly in $\mathcal{H}_s$. It follows from Lemma \ref{lem:2.7} that $u_n\to v_s$ strongly in $L^2(\mathbb{R}^2)\cap L^4(\mathbb{R}^2)$.

For any $\psi\in \mathcal{H}_s$, we have  $\varphi_n=\psi-N^{-1}\langle u_n,\psi\rangle u_n\in T_{u_n}(S(N))$. Choosing $\varphi=\varphi_n$ in \eqref{eq:4.18}, we have
\begin{equation}\label{eq:4.19}
(-\Delta)^su_n+|x|^2u_n-|u_n|^2u_n-N^{-1}\langle E'_{N,s}(u_n),u_n\rangle u_n\to 0\quad {\rm in} \quad
\mathcal{H}_s^{-1}.
\end{equation}
Since $\langle E'_{N,s}(u_n),u_n\rangle$ is uniformly bounded in  $n$, we may assume
\[
N^{-1}\langle E'_{N,s}(u_n),u_n\rangle \to \mu_s
\]
as $n\to \infty$. Thus,
\begin{equation}\label{eq:4.20}
(-\Delta)^sv_s+|x|^2v_s-|v_s|^2v_s-\mu_sv_s=0\quad {\rm in} \quad
\mathcal{H}_s^{-1}
\end{equation}
as well as  $\mu_s=N^{-1}\langle E'_{N,s}(v_s),v_s\rangle,$ alternatively,
\begin{equation}\label{eq:4.21}
\langle E'_{N,s}(u_n),u_n\rangle \to \langle E'_{N,s}(v_s),v_s\rangle
\end{equation}
as $n\to \infty$. Using the fact that $u_n\to v_s$ strongly in $L^2(\mathbb{R}^2)\cap L^4(\mathbb{R}^2)$ and \eqref{eq:4.21}, we derive $u_n\to v_s$ strongly in $\mathcal{H}_s$. Therefore, $E_{N,s}(v_s)=c_s$. Furthermore, by \eqref{eq:4.9} we conclude that $v_n\to v_s$ strongly in $\mathcal{H}_s$, where $v_n\geq0$  is  radially symmetric and decreasing with respect to $|x|$. Hence, $v_s$ is also radially symmetric and decreasing with respect to $|x|$. Moreover, $v_s$ is nonnegative and nontrivial.
The proof is complete. \quad $\Box$

\bigskip

\bigskip

\section{Asymptotic behavior}

\bigskip

In this section, we study the asymptotic behavior of  the local minimal solution $u_s$ and the mountain pass solution $v_s$ as $s\to 1_-$.

Since both the fractional operator $(-\Delta)^s$ and the solutions  $u_s$  and $v_s$ of \eqref{eq:1.12} depend on $s$, it becomes  difficult to deal with asymptotic behavior of $u_s$  and $v_s$ as $s\to 1_-$.  In the consideration of passing limit, a key ingredient is to find the limit equation of \eqref{eq:1.12} as $s\to 1_-$. Taking into account the result in Lemma \ref{lem:3.3}, we introduce  very weak solutions in the  following sense.
\begin{Definition} \label{very weak}
We say $u\in L^2(\mathbb{R}^2)\cap L^4(\mathbb{R}^2)$ is a very weak solution of \eqref{eq:1.11}, if $xu\in L^2(\mathbb{R}^2)$ and
\[
\int_{\mathbb{R}^2}u(-\Delta)^{s}\varphi\,dx+\int_{\mathbb{R}^2}|x|^2u\varphi\,dx
-\int_{\mathbb{R}^2}|u|^2u\varphi\,dx-\mu_{{}_N}\int_{\mathbb{R}^2}u\varphi\,dx=0
\]
holds for any $\varphi\in C_c^\infty(\mathbb{R}^2)$.
\end{Definition}
This definition enlightens a way to pass limit in $s$.

\bigskip

We commence with the study of $u_s$.

{\bf Proof of Theorem \ref{thm:5}.}
 By \eqref{eq:1.23}, arguing as \eqref{eq:3.24}, we derive that for $s>3/4$,
 $$
 \int_{\mathbb{R}^2}|(-\Delta)^{\frac 38}u_s|^2\,dx\leq C,
 $$
 where $C$ is independent of $s$. This with \eqref{eq:1.24} implies that $\{u_s\}$ is uniformly bounded in $\mathcal{H}_{3/4}$. By  Lemma \ref{lem:2.7},   there exist a sequence $\{s_k\}$,  $s_k\to1_-$ as $k\to\infty$,  and $u\in\mathcal{H}_{3/4}$ such that $u_{s_k}\rightharpoonup u$ weakly in $\mathcal{H}_{3/4}$,  $u_{s_k}\to u$ strongly in $L^2(\mathbb{R}^2)\cap L^4(\mathbb{R}^2)$. By \eqref{eq:3.30}, $\mu_k$ is bounded in $k$. We may assume $\mu_k\to \mu$ as $k\to\infty$. Therefore, $u$ solves
\begin{equation}\label{eq:5.1}
-\Delta u+|x|^2u=|u|^2u+\mu u
\end{equation}
in very weak sense.

Obviously, $\hat{u}_{s_k}\to \hat{u}$ strongly in $L^2(\mathbb{R}^2)$ and so $\hat{u}_{s_k}\to \hat{u}$ a.e. $\mathbb{R}^2$.  By the Fatou lemma and Lemma 3.1 in \cite{Frank2008}, we have
\begin{equation}\label{eq:5.2}
\begin{split}
\int_{\mathbb{R}^2}|\xi|^2|\hat{u}(\xi)|^2\,d\xi\leq \liminf_{k\to\infty}\int_{\mathbb{R}^2}|\xi|^{2s_k}|\hat{u}_{s_k}(\xi)|^2\,d\xi
=\liminf_{k\to\infty}\int_{\mathbb{R}^2}\Big(|(-\Delta)^{\frac{s_k}{2}}u_{s_k}|^2\,dx.
\end{split}
\end{equation}
Therefore, $u\in \mathcal{H}$ and solves \eqref{eq:5.1} in weak sense.

Since $u_{s_k}$ is a solution of \eqref{eq:3.27},  we show by \eqref{eq:5.1} and the fact  that $u_{s_k}\to u$ strongly in $L^2(\mathbb{R}^2)\cap L^4(\mathbb{R}^2)$, that
 \begin{equation}\label{eq:5.3}
\int_{\mathbb{R}^2}\Big(|(-\Delta)^{\frac{s_k}{2}}u_{s_k}|^2+|x|^2|u_{s_k}|^2\Big)\,dx\to
\int_{\mathbb{R}^2}\Big(|\nabla u|^2+|x|^2|u|^2\Big)\,dx
\end{equation}
as $k\to\infty$.

Finally, we prove that $u$ is a global minimizer of $E(u)$ on $S(N)$. If not,
\begin{equation}\label{eq:5.4}
E(u)>e_1=\inf_{u\in \mathcal{H}}E(u).
\end{equation}
It is known in \cite{Guo2014} that there exists $w\in \mathcal{H}$ such that $E(w)=e_1$. Noting $t_s\to+\infty$, as $s\to1_-$, we have
$w\in A_{t_s}$ for $s$ close to $1$. It follows from \eqref{eq:5.3}, \eqref{eq:5.4} and
\[
\int_{\mathbb{R}^2}|(-\Delta)^{\frac{s}{2}}w|^2\,dx\to \int_{\mathbb{R}^2}|\nabla w|^2\,dx,\quad {\rm as} \quad s\to1_-
\]
 that
 \[
 E_{N,s_k}(u_{s_k})>E_{N,s_k}(w)
 \]
for $k$ large enough, which contradicts to the assumption that $u_{s_k}$ is a minimizer of $E_{N,s_k}(u)$ in $A_{t_{s_k}}$. The proof is complete.
\quad $\Box$

\bigskip

Next, we turn to consider the asymptotic behavior of  $v_s$  as $s\to1_-$. We start by estimating the mountain pass value $c_s$.
\begin{Lemma}\label{lem:5.1} Let $\tilde{c}_s=\frac{N(1-s)}{2(2s-1)}\Big(\frac{N_s^*}{N}\Big)^{\frac{s}{1-s}}$. There holds that
\begin{equation*}
0\leq \limsup_{s\to1_-}[c_s-\tilde{c}_s](N_s^*/N)^{\frac1{1-s}}\leq C,
\end{equation*}
where $C>0$ is independent of $s$.
\end{Lemma}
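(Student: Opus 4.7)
The lower bound $\liminf_{s\to 1_-}[c_s - \tilde c_s](N_s^*/N)^{1/(1-s)} \geq 0$ would follow immediately from Remark 4.1, which already gives $c_s \geq \tfrac{1-s}{2}t_s = \tilde c_s$. So the real content of the lemma is the upper bound on $c_s$, and my plan is to establish it by testing the mountain pass functional against the explicit path already used in the proof of Lemma 4.1.

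Specifically, I would reuse the test family $\varphi_s^t = t\varphi_s(tx)$ and take the path $\gamma_s(r) = \varphi_s^{(1-r)\tau_1 + r\tau_2}$ with $\tau_1 = 1$ and $\tau_2 = 2^{(1-s)^{-2}}$, which Lemma 4.1 guarantees lies in $\Gamma_s$. The explicit computation from that proof gives
\[
F(\tau) := E_{N,s}(\varphi_s^\tau) = a\tau^{2s} + b\tau^{-2} - c\tau^2
\]
with $a = \tfrac{N}{2(2s-1)}$, $c = \tfrac{sN^2}{2(2s-1)N_s^*}$, and $b = \tfrac{N}{2N_s^*}\int_{\mathbb{R}^2}|x|^2|Q_s|^2\,dx$. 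Lemma 2.3 together with $N_s^* \to N^*$ gives a bound $b \leq C$ independent of $s$. Splitting $F = g + h$ with $g(\tau) = a\tau^{2s} - c\tau^2$ and $h(\tau) = b\tau^{-2}$, an elementary calculation shows that $g$ attains its global maximum at $t^* := (N_s^*/N)^{1/(2-2s)}$, with the pleasant identities $g(t^*) = \tilde c_s$ and $(t^*)^{-2} = (N/N_s^*)^{1/(1-s)}$.

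The key step is then to show that the maximum of $F$ on $[\tau_1,\tau_2]$ is attained at an interior critical point $\tau_{\max}$ satisfying $\tau_{\max} \geq t^*/2$. The critical point equation reads $\phi(\tau) = b$ with $\phi(\tau) := sa\tau^{2s+2} - c\tau^4$, a function vanishing at $t^*$ with slope $\phi'(t^*) = -2c(1-s)(t^*)^3 < 0$. Since $b$ is bounded while $c(t^*)^4(1-s) \sim (1-s)(N^*/N)^{2/(1-s)}$ blows up as $s \to 1_-$, linearizing $\phi(\tau) = b$ around $t^*$ yields $\tau_{\max} = t^*\bigl(1 - O(b/(c(t^*)^4(1-s)))\bigr)$, so $\tau_{\max} \geq t^*/2$ for $s$ close to $1$.

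Once $\tau_{\max} \geq t^*/2$ is in hand, combining $g(\tau_{\max}) \leq g(t^*) = \tilde c_s$ with $h(\tau_{\max}) \leq 4b\,(t^*)^{-2} = 4b\,(N/N_s^*)^{1/(1-s)}$ gives
\[
c_s \;\leq\; F(\tau_{\max}) \;\leq\; \tilde c_s + 4b\,(N/N_s^*)^{1/(1-s)},
\]
so $[c_s - \tilde c_s](N_s^*/N)^{1/(1-s)} \leq 4b \leq C$, as required. The main obstacle is the bookkeeping needed to certify that $\tau_{\max}$ really is the global maximizer of $F$ on $[\tau_1,\tau_2]$ rather than one of the endpoints: the value $F(\tau_1) = a + b - c = O(1)$ is dwarfed by $\tilde c_s \to \infty$ (since $N < N^*$), and $F(\tau_2) < 0$ was already observed in Lemma 4.1, so for $s$ close enough to $1$ the max is genuinely attained at the interior critical point where the estimate applies.
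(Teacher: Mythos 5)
Your proposal is correct and follows essentially the same route as the paper: the same test path $\varphi_s^{(1-r)\tau_1+r\tau_2}$, the same splitting of $E_{N,s}(\varphi_s^\tau)$ into a ``clean'' part whose maximum is exactly $\tilde c_s$ (attained at $t^*=(N_s^*/N)^{1/(2(1-s))}$, the paper's $\rho_1$) plus the potential term $b\tau^{-2}$, and the same conclusion once the interior maximizer is shown to be comparable to $t^*$. The only deviation is how that comparability is certified -- the paper deduces $\liminf \rho_0^2 (N/N_s^*)^{s/(1-s)}\ge 1$ from the identity $f(\rho_0)=(1-s)N_s^*\rho_0^{2s}+2(2s-1)\rho_0^{-2}\int|x|^2Q_s^2$ together with the lower bound $f(\rho_0)\ge N_s^*(1-s)(N_s^*/N)^{s/(1-s)}$, while you analyze the critical-point equation $\phi(\tau)=b$ directly; your linearization step is slightly informal but is easily made rigorous by noting $\phi(t^*/2)=c(t^*)^4\,2^{-4}(2^{2-2s}-1)\gtrsim (1-s)(t^*)^4\to\infty>b$, so the relevant root lies in $(t^*/2,t^*)$.
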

\begin{proof}
By Remark \ref{rem:4.1}, the first inequality holds. We need only to prove the second inequality.
Let $g(t)=\varphi_s^{(1-t)\tau_1+t\tau_2},t\in[0,1]$, where
$\varphi_s$, $\tau_1$ and $\tau_2$ are given in the proof of Lemma \ref{lem:4.1}.
Obviously, $g(0)=\varphi_s^{\tau_1}=\phi_1$, $g(1)=\varphi_s^{\tau_2}=\phi_2$ and $g\in \Gamma_s$. Hence,
\begin{equation}\label{eq:5.5}
c_s\leq \max_{t\in[0,1]}E_{N,s}(g(t)).
\end{equation}
Let $\rho= (1-t)\tau_1+t\tau_2$. By \eqref{eq:4.2}, we have
\begin{equation}\label{eq:5.6}
\begin{split}
E_{N,s}(g(t))&=\frac{N}{2(2s-1)N_s^*}[N_s^*\rho^{2s}+(2s-1)\rho^{-2}\int_{\mathbb{R}^2}|x|^2|Q_s|^2\,dx
-sN\rho^{2}]\\
&=:\frac{N}{2(2s-1)N_s^*}f(\rho).
\end{split}
\end{equation}
By Lemma \ref{lem:4.1} and Remark \ref{rem:4.1},
\[
\max_{t\in[0,1]}E_{N,s}(g(t))\geq \frac{N(1-s)}{2(2s-1)}(\frac{N_s^*}N)^{\frac s{1-s}}>\max\{E_{N,s}(g(0)),E_{N,s}(g(1))\}.
\]
This yields
\begin{equation}\label{eq:5.7}
\max_{\rho\in [\tau_1,\tau_2]}f(\rho)\geq N_s^*(1-s)(\frac{N_s^*}N)^{\frac s{1-s}}>\max\{f(\tau_1),f(\tau_2)\}.
\end{equation}
Therefore, the function $f(\rho)$ achieves its maximum in $[\tau_1,\tau_2]$ at a point  $\rho_0\in (\tau_1,\tau_2)$. It means  that
\begin{equation}\label{eq:5.8}
f'(\rho_0)=2sN_s^*\rho_0^{2s-1}-2(2s-1)\rho_0^{-3}\int_{\mathbb{R}^2}|x|^2|Q_s|^2\,dx-2sN\rho_0=0
\end{equation}
and
\begin{equation}\label{eq:5.9}
f(\rho_0)=\max_{\rho\in[\tau_1,\tau_2]}f(\rho)
=N_s^*{\rho_0}^{2s}+(2s-1){\rho_0}^{-2}\int_{\mathbb{R}^2}|x|^2|Q_s|^2\,dx
-sN\rho_0^2.
\end{equation}
As a result,
\begin{equation}\label{eq:5.9a}
\begin{split}
f(\rho_0)&=(1-s)N_s^*{\rho_0}^{2s}+2(2s-1){\rho_0}^{-2}\int_{\mathbb{R}^2}|x|^2|Q_s|^2\,dx\geq
N_s^*(1-s)\bigg(\frac{N_s^*}N\bigg)^{\frac s{1-s}}.
\end{split}
\end{equation}
By \eqref{eq:4.3} and Lemma \ref{lem:2.5},  $\int_{\mathbb{R}^2}|x|^2|Q_s|^2\,dx\leq C$.  This inequality with  $\rho_0\geq1$ and \eqref{eq:5.9a} yield
\begin{equation}\label{eq:5.10}
\liminf_{s\to 1_-}\frac{{\rho_0}^2}{(\frac{N_s^*}N)^{\frac s{1-s}}}\geq \liminf_{s\to 1_-}\frac{{\rho_0}^{2s}}{(\frac{N_s^*}N)^{\frac s{1-s}}}\geq1.
\end{equation}

Let $h(\rho)=N_s^*\rho^{2s}
-sN\rho^{2}$ and $\rho_1={(\frac{N_s^*}N)^{\frac 1{2(1-s)}}}$. Then,   $h(\rho)$ is increasing in $[1,\rho_1]$ and decreasing in $[\rho_1,\tau_2]$ whenever $s<1$ close to $1$. Moreover,
\begin{equation}\label{eq:5.11}
\max_{\rho\in[\tau_1,\tau_2]}h(\rho)=h(\rho_1)=N_s^*(1-s)\bigg(\frac{N_s^*}N\bigg)^{\frac s{1-s}}.
\end{equation}
By \eqref{eq:5.5} , \eqref{eq:5.6} and  and  \eqref{eq:5.11}, we have
\begin{equation*}
\begin{split}
c_s&\leq \frac{N}{2(2s-1)N_s^*}\max_{\rho\in [\tau_1,\tau_2]}f(\rho)\\
&=\frac{N}{2(2s-1)N_s^*}f(\rho_0)\\
&\leq \frac{N}{2(2s-1)N_s^*}\max_{\rho\in [\tau_1,\tau_2]}h(\rho)+\frac{N}{2N_s^*}\rho_0^{-2}\int_{\mathbb{R}^2}|x|^2|Q_s|^2\,dx\\
&=\tilde{c}_s+\frac{N}{2N_s^*}\rho_0^{-2}\int_{\mathbb{R}^2}|x|^2|Q_s|^2\,dx.
\end{split}
\end{equation*}
This with \eqref{eq:4.3} implies that
\[
[c_s-\tilde{c}_s]\rho_0^{2}\leq \frac{N}{2N_s^*}\int_{\mathbb{R}^2}|x|^2|Q_s|^2\,dx\leq
\frac{N}{2N_s^*}(N_s^*+C_3C_4).
\]
Equation  \eqref{eq:5.10} and Lemma \ref{lem:2.5} then yield
\[
\limsup_{n\to\infty}[c_s-\tilde{c}_s](N_s^*/N)^{\frac1{1-s}}\leq C,
\]
where $C>0$ is independent of $s$.
The assertion follows.

\end{proof}

\bigskip

Let $\lambda_1$ be  the principle eigenvalue of the operator $\mathcal{L}:=(-\Delta)^s+|x|^2$ and  $\varphi>0$ be the  corresponding eigenfunction of $\mathcal{L}$ with
$\|\varphi\|_2^2=1$.

\begin{Lemma}\label{lem:5.2} There hold that
\begin{equation}\label{eq:5.12}
\frac{N(1-s)}{2s-1}\big(\frac{N_s^*}N\big)^{\frac s{1-s}}\leq \int_{\mathbb{R}^2}|(-\Delta)^{\frac s2}v_s|^2\,dx\leq 2N\big(\frac{N_s^*}N\big)^{\frac s{1-s}}
\end{equation}
and
\begin{equation}\label{eq:5.13}
 1-\frac{\lambda_1N}{\int_{\mathbb{R}^2}|(-\Delta)^{\frac{s}{2}}v_s|^2\,dx}\leq\frac{\int_{\mathbb{R}^2}|v_s|^4\,dx}{\int_{\mathbb{R}^2}|(-\Delta)^{\frac{s}{2}}v_s|^2\,dx}\leq 2s.
\end{equation}
provided that $s<1$ and close to $1$.
\end{Lemma}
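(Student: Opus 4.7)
The plan is to combine the Pohozaev identity, the energy identity, the mountain-pass bounds from Lemma~\ref{lem:5.1}, and a test-function argument with the principal eigenfunction $\varphi$ of $\mathcal{L}=(-\Delta)^s+|x|^2$. Write $a=\int_{\mathbb{R}^2}|(-\Delta)^{s/2}v_s|^2\,dx$ and $b=\int_{\mathbb{R}^2}|x|^2|v_s|^2\,dx$. Lemma~\ref{lem:2.1} applied to \eqref{eq:1.12} (with $V(x)=|x|^2$ and $f(u)=|u|^2u+\mu_s u$) yields the Pohozaev identity $sa-b=\tfrac12\int_{\mathbb{R}^2}|v_s|^4\,dx$, which combined with $E_{N,s}(v_s)=c_s$ gives the key relation $2c_s=(1-s)a+2b$.

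The upper bound in \eqref{eq:5.12} then follows from $b\geq 0$: $(1-s)a\leq 2c_s$, and inserting $c_s\leq \tilde c_s+C(N_s^*/N)^{-1/(1-s)}$ from Lemma~\ref{lem:5.1} shows $a\leq t_s+o(t_s)$; since $1/(2s-1)<2$ for $s$ near $1$, this is at most $2N(N_s^*/N)^{s/(1-s)}$. The upper bound in \eqref{eq:5.13} is immediate from Pohozaev: $\int|v_s|^4=2sa-2b\leq 2sa$.

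For the lower bound in \eqref{eq:5.13}, I will test the weak equation \eqref{eq:1.17} against $\varphi$. Using self-adjointness of $(-\Delta)^s$ and $(-\Delta)^s\varphi+|x|^2\varphi=\lambda_1\varphi$, I obtain $(\lambda_1-\mu_s)\int_{\mathbb{R}^2} v_s\varphi\,dx=\int_{\mathbb{R}^2}|v_s|^2 v_s\varphi\,dx$. Since $v_s\geq 0$ is nontrivial (radially decreasing, by Theorem~\ref{thm:4}) and $\varphi>0$, both integrals are positive, so $\mu_s<\lambda_1$. Substituting $\mu_s N=a+b-\int|v_s|^4$ and using $b\geq 0$ gives $\int|v_s|^4>a+b-\lambda_1 N\geq a-\lambda_1 N$, which is precisely the lower bound in \eqref{eq:5.13}.

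The most delicate point, and the main obstacle, is the lower bound in \eqref{eq:5.12}. Eliminating $\int|v_s|^4$ between the Pohozaev identity and the weak equation yields $\mu_s N=(1-2s)a+3b$, and the bound $\mu_s<\lambda_1$ from the previous step reads $3b<\lambda_1 N+(2s-1)a$ (note $2s-1>0$). Inserting this into $2c_s=(1-s)a+2b$ produces, after rearrangement, $(s+1)a>6c_s-2\lambda_1 N$. Using the lower bound $c_s\geq \tilde c_s=\tfrac{(1-s)t_s}{2}$ from Remark~\ref{rem:4.1} gives
\[
a>\frac{3(1-s)t_s-2\lambda_1 N}{s+1}.
\]
The key quantitative observation is that $\lambda_1=\lambda_1(s)$ remains bounded as $s\to 1_-$ (it converges to the ground-state energy $2$ of the 2D harmonic oscillator, by an argument analogous to Lemma~\ref{lem:3.3}), while for $0<N<N^*$ one has $(1-s)t_s=\frac{N(1-s)}{2s-1}(N_s^*/N)^{s/(1-s)}\to+\infty$ as $s\to 1_-$. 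Hence the correction $2\lambda_1 N$ is absorbed, and for $s$ close enough to $1$ the displayed inequality yields $a>(1-s)t_s=\frac{N(1-s)}{2s-1}(N_s^*/N)^{s/(1-s)}$, completing \eqref{eq:5.12}. The difficulty here is precisely coupling the three asymptotic ingredients ($c_s\sim\tilde c_s$, $\lambda_1(s)$ bounded, and $(1-s)t_s\to\infty$) cleanly enough to extract the clean factor on the right-hand side.
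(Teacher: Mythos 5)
Your proof is correct, and most of it coincides with the paper's: the Pohozaev identity \eqref{eq:5.14}, the resulting relation $2c_s=(1-s)\|(-\Delta)^{s/2}v_s\|_2^2+2\int|x|^2|v_s|^2$, and Lemma \ref{lem:5.1} give the upper bound in \eqref{eq:5.12} exactly as in \eqref{eq:5.15}--\eqref{eq:5.16}, and your eigenfunction test giving $\mu_s<\lambda_1$ is precisely the paper's route to \eqref{eq:5.13}. Where you genuinely diverge is the lower bound in \eqref{eq:5.12}. The paper's \eqref{eq:5.17} asserts $\|(-\Delta)^{s/2}v_s\|_2^2\geq 2E_{N,s}(v_s)$ ``from \eqref{eq:5.14} and Remark \ref{rem:4.1}''; since $2E_{N,s}(v_s)=(1-s)\|(-\Delta)^{s/2}v_s\|_2^2+2\int|x|^2|v_s|^2$, that inequality amounts to $2\int|x|^2|v_s|^2\leq s\|(-\Delta)^{s/2}v_s\|_2^2$, which Pohozaev alone only gives with the factor $2$ replaced by $1$ and which is not otherwise justified at that stage (the decay $\int|x|^2|v_s|^2\to0$ is only proved later, in \eqref{eq:5.35}, using this very lemma). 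Your alternative --- eliminating $\int|v_s|^4$ between the multiplier identity and Pohozaev to get $\mu_sN=(1-2s)a+3b$, feeding in $\mu_s<\lambda_1$ to obtain $(s+1)a>6c_s-2\lambda_1N$, and then absorbing $2\lambda_1N$ because $(1-s)t_s\to+\infty$ --- closes this gap and delivers the stated constant. The one ingredient you should justify explicitly is the uniform bound $\lambda_1=\lambda_1(s)\leq C$ for $s$ near $1$; this is immediate from testing the Rayleigh quotient of $(-\Delta)^s+|x|^2$ with a fixed normalized $C_c^\infty$ function and the interpolation estimate \eqref{eq:3.4} (the finer claim that $\lambda_1(s)$ converges to the harmonic-oscillator ground energy is not needed). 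With that remark added, your argument is complete and, on this one point, more robust than the paper's.
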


\begin{proof}
Since $v_s$ satisfies \eqref{eq:1.12}  with $\mu_s=N^{-1}\langle E'_{N,s}(v_s),v_s \rangle$ and the Pohozaev identity in Lemma \ref{lem:2.1}, we have
\begin{equation}\label{eq:5.14}
s\int_{\mathbb{R}^2}|(-\Delta)^{\frac{s}{2}}v_s|^2\,dx-\int_{\mathbb{R}^2}|x|^2|v_s|^2\,dx
-\frac12\int_{\mathbb{R}^2}|v_s|^4\,dx=0.
\end{equation}
This and  Lemma \ref{lem:5.1} imply that
\begin{equation}\label{eq:5.15}
\begin{split}
\frac{N(1-s)}{2(2s-1)}\big(\frac{N_s^*}N\big)^{\frac s{1-s}}&\leq E_{N,s}(v_s)\\
&\leq\frac{1-s}2\int_{\mathbb{R}^2}|(-\Delta)^{\frac{s}{2}}v_s|^2\,dx+\int_{\mathbb{R}^2}|x|^2|v_s|^2\,dx\\
&\leq \frac{N(1-s)}{2(2s-1)}\big(\frac{N_s^*}N\big)^{\frac s{1-s}}+O(\big(\frac{N_s^*}N\big)^{\frac s{s-1}})
\end{split}
\end{equation}
for $s<1$ and close to $1$. Hence,
\begin{equation}\label{eq:5.16}
\int_{\mathbb{R}^2}|(-\Delta)^{\frac{s}{2}}v_s|^2\,dx\leq2 N\big(\frac{N_s^*}N\big)^{\frac s{1-s}}
\end{equation}
for $s<1$ and close to $1$.

On the other hand, we know from \eqref{eq:5.14} and Remark \ref{rem:4.1} that
\begin{equation}\label{eq:5.17}
\int_{\mathbb{R}^2}|(-\Delta)^{\frac{s}{2}}v_s|^2\,dx\geq 2E_{N,s}(v_s)\geq \frac{N(1-s)}{2s-1}\big(\frac{N_s^*}N\big)^{\frac s{1-s}}.
\end{equation}
Thus, we conclude that \eqref{eq:5.12} is valid.

\bigskip

Now, we prove \eqref{eq:5.13}.

Since $v_s$ is a solution of \eqref{eq:1.12}, we infer that
\begin{equation*}
\begin{split}
\lambda_1\int_{\mathbb{R}^2}v_s\varphi\,dx&=\int_{\mathbb{R}^2}v_s[(-\Delta)^s\varphi+|x|^2\varphi]\,dx\\
&=\int_{\mathbb{R}^2}\varphi[(-\Delta)^sv_s+|x|^2v_s]\,dx\geq \mu_s\int_{\mathbb{R}^2}v_s\varphi\,dx,
\end{split}
\end{equation*}
that is,
\begin{equation}\label{eq:5.18}
\mu_s\leq \lambda_1
\end{equation}
Hence, we deduce from \eqref{eq:4.20} that
\begin{equation*}
\begin{split}
\int_{\mathbb{R}^2}|v_s|^4\,dx&=\int_{\mathbb{R}^2}|(-\Delta)^{\frac{s}{2}}v_s|^2\,dx
+\int_{\mathbb{R}^2}|x|^2|v_s|^2\,dx-\mu_sN\\
&\geq \int_{\mathbb{R}^2}|(-\Delta)^{\frac{s}{2}}v_s|^2\,dx+\int_{\mathbb{R}^2}|x|^2|v_s|^2\,dx-\lambda_1N\\
&\geq \int_{\mathbb{R}^2}|(-\Delta)^{\frac{s}{2}}v_s|^2\,dx-\lambda_1N.
\end{split}
\end{equation*}
This  yields that
\begin{equation}\label{eq:5.19}
\frac{\int_{\mathbb{R}^2}|v_s|^4\,dx}{\int_{\mathbb{R}^2}|(-\Delta)^{\frac{s}{2}}v_s|^2\,dx}\geq 1-
\frac{\lambda_1N}{\int_{\mathbb{R}^2}|(-\Delta)^{\frac{s}{2}}v_s|^2\,dx}.
\end{equation}
While \eqref{eq:5.14} implies that
\begin{equation}\label{eq:5.20}
\frac{\int_{\mathbb{R}^2}|v_s|^4\,dx}{\int_{\mathbb{R}^2}|(-\Delta)^{\frac{s}{2}}v_s|^2\,dx}\leq 2s,
\end{equation}
giving the conclusion. The proof is complete.
\end{proof}

\bigskip

\begin{Remark}\label{rem:5.1}
Equation \eqref{eq:5.12} also tell us that
$$
\lim_{s\to1_-}\int_{\mathbb{R}^2}|(-\Delta)^{\frac{s}{2}}v_s|^2\,dx=+\infty.
$$
\end{Remark}

\bigskip

{\bf Proof of Theorem 1.6.}
Set $\tilde{v}_s=\varepsilon_sv_s(\varepsilon_sx)$ with $\varepsilon_s=\|(-\Delta)^{\frac s2}v_s\|_2^{-\frac1s}$.
Then $\tilde{v}_s$ obeys
\[
\int_{\mathbb{R}^2}|(-\Delta)^{\frac{s}{2}}\tilde{v}_s|^2\,dx=1
\]
and
\begin{equation}\label{eq:5.21}
\int_{\mathbb{R}^2}|\tilde{v}_s|^2\,dx=N.
\end{equation}
As the proof of  \eqref{eq:3.24}, we get
\[
\int_{\mathbb{R}^2}|(-\Delta)^{\frac{3}{8}}\tilde{v}_s|^2\,dx\leq1+N.
\]
Therefore, there exists a sequence $\{s_k\}$,  $s_k\to1_-$ as $k\to\infty$,  such that $\tilde{v}_{s_k}\rightharpoonup \tilde{v}$ weakly in $H^{\frac34}(\mathbb{R}^2)$; $\tilde{v}_{s_k}\to \tilde{v}$ strongly in $L_{loc}^p(\mathbb{R}^2)$
for  $2\leq p<8$. Furthermore, due to the fact that the embedding $H_r^{\frac34}(\mathbb{R}^2)\hookrightarrow L^p(\mathbb{R}^2)$ is compact, see Theorem $\Pi$.1 in \cite{Lions1982}, $\tilde{v}_{s_k}\to \tilde{v}$ strongly in $L^q(\mathbb{R}^2)$ for any $2<p<8$.

Since   $v_s$ is a weak solution of  \eqref{eq:1.12}  with $\mu_s=N^{-1}\langle E'_{N,s}(v_s),v_s \rangle$, so it is a very weak solution, that is,
\begin{equation}\label{eq:5.22}
\int_{\mathbb{R}^2}v_s(-\Delta)^s\varphi\,dx+\int_{\mathbb{R}^2}|x|^2v_s\varphi\,dx
-\int_{\mathbb{R}^2}|v_s|^2v_s\varphi\,dx-\mu_s\int_{\mathbb{R}^2}v_s\varphi\,dx=0.
\end{equation}
Thus, $\tilde{v}_s$ obeys that
\begin{equation}\label{eq:5.23}
\int_{\mathbb{R}^2}\tilde{v}_s(-\Delta)^s\varphi\,dx+\varepsilon_s^{2s}\int_{\mathbb{R}^2}|x|^2\tilde{v}_s\varphi\,dx
-\varepsilon_s^{2(s-1)}\int_{\mathbb{R}^2}|\tilde{v}_s|^2\tilde{v}_s\varphi\,dx
-\mu_s\varepsilon_s^{2s}\int_{\mathbb{R}^2}\tilde{v}_s\varphi\,dx=0.
\end{equation}

Now we study the asymptotic behavior of $\tilde{v}_{s_k}$. For simplicity, we denote
$v_{s_k}$, $\mu_{s_k}$, $\varepsilon_{s_k}$ by $v_{k}$,  $\mu_k$, $\varepsilon_{k}$ respectively.
By Lemma \ref{lem:3.3},  we have
 \begin{equation}\label{eq:5.24}
 \lim_{k\to \infty}\int_{\mathbb{R}^2}\tilde{v}_{s_k}(-\Delta)^{s_k}\varphi\,dx
 =\int_{\mathbb{R}^2}\tilde{v}(-\Delta\varphi)\,dx.
\end{equation}
It follows from Lemma \ref{lem:5.2} that
\begin{equation*}
\begin{split}
\varepsilon_{k}^{2({s_k}-1)}&=[\int_{\mathbb{R}^2}|(-\Delta)^{\frac{s_k}{2}}v_{k}|^2\,dx]^{-\frac{s_k-1}{s_k}}\\
&\geq (2N)^{-\frac{s_k-1}{s_k}}\frac{N}{N^*_{s_k}}\to \frac N{N^*}, \quad as \quad k\to \infty,
\end{split}
\end{equation*}
and
\begin{equation*}
\varepsilon_{k}^{2({s_k}-1)}\leq \Big[\frac{2(2s_k-1)}{N(1-s_k)}\Big]^{\frac{s_k-1}{s_k}}\frac{N}{N^*_{s_k}}\to \frac N {N^*}\quad as  \quad k\to \infty.
\end{equation*}
Hence,
\begin{equation*}
\lim_{k\to \infty}\varepsilon_{k}^{2({s_k}-1)}=\frac N{N^*}.
\end{equation*}
The convergence $\tilde{v}_{s_k}\to \tilde{v}$ strongly in $L^p(\mathbb{R}^2)$, $2\leq p\leq 4$, yields that
\begin{equation}\label{eq:5.25}
 \lim_{k\to \infty}\varepsilon_{k}^{2{s_k}}\int_{\mathbb{R}^2}|x|^2\tilde{v}_{s_k}\varphi\,dx
 =0
\end{equation}
and
\begin{equation}\label{eq:5.26}
\lim_{k\to \infty}\varepsilon_{k}^{2({s_k}-1)}\int_{\mathbb{R}^2}\tilde{v}_{s_k}^3\varphi\,dx
=\frac N{N^*}\int_{\mathbb{R}^2}\tilde{v}^3\varphi\,dx.
\end{equation}
By \eqref{eq:5.18} and $\lim_{s\to1_-}\varepsilon_s=0$ ,
\[
\limsup_{s\to1_-}\mu_s\varepsilon_s^{2s}\leq \lim_{s\to1_-}\lambda_1\varepsilon_s^{2s}=0.
\]
Using $\mu_s=N^{-1}\langle E'_{N,s}(v_s),v_s\rangle$, we show by Lemma \ref{lem:5.2} that
\begin{equation}\label{eq:5.27}
\mu_{k}\varepsilon_{k}^{2s_k}=N^{-1}[
\int_{\mathbb{R}^2}|(-\Delta)^{\frac{s_k}{2}}v_{k}|^2\,dx+
\int_{\mathbb{R}^2}|x|^2|v_{k}|^2\,dx-\int_{\mathbb{R}^2}|v_{k}|^4\,dx]
(\int_{\mathbb{R}^2}|(-\Delta)^{\frac{s_k}{2}}v_{k}|^2\,dx)^{-1}
\end{equation}
and
\begin{equation}\label{eq:5.27a}
\begin{split}
\mu_{k}\varepsilon_{k}^{2s_k}
&\geq -N^{-1}\Big(\int_{\mathbb{R}^2}|(-\Delta)^{\frac{s_k}{2}}v_{k}|^2\,dx\Big)^{s_k-1}
\frac{\int_{\mathbb{R}^2}|v_{k}|^4\,dx}{\int_{\mathbb{R}^2}|(-\Delta)^{\frac{s_k}{2}}v_{k}|^2\,dx}\\
&\geq -2sN^{-1}\Big[\frac{2(2s_k-1)}{N(1-s_k)}\Big]^{\frac{s_k-1}{s_k}}\frac N{N_{s_k}^*}\to -\frac 2{N^*}
\quad {\rm as}  \quad k\to \infty.
\end{split}
\end{equation}
Therefore, we may assume
\begin{equation}\label{eq:5.28}
\lim_{k\to\infty}\mu_{k}\varepsilon_{k}^{2s_k}=-\beta^2\leq0.
\end{equation}
The convergence  $\tilde{v}_{s_k}\rightharpoonup \tilde{v}$ weakly in $L^2(\mathbb{R}^2)$ implies
\begin{equation}\label{eq:5.29}
\lim_{k\to\infty}\mu_{k}\varepsilon_{k}^{2{s_k}}\int_{\mathbb{R}^2}\tilde{v}_{s_k}\varphi\,dx=
-\beta^2\int_{\mathbb{R}^2}\tilde{v}\varphi\,dx.
\end{equation}
Consequently, $\tilde{v}\geq 0$,  $\tilde{v}\neq0$, solves
\[
-\Delta u+\beta^2 u=\frac {N^*}Nu^3
\]
in very weak sense.  We have that $\beta\neq0$ by  the Liouville theorem for the equation
\[
-\Delta u=\frac {N^*}Nu^3
\]
in \cite{GidasSpruck1981}.  By the uniqueness result for \eqref{eq:1.14},
\[
\tilde{v}=\sqrt{\frac{N}{N^*}}\beta Q(\beta x),
\]
and then, $\|\tilde{v}\|_2^2=N$. It implies that $\tilde{v}_{s_k}\to \tilde{v}$ strongly in $L^2(\mathbb{R}^2)$.

\bigskip

Finally, we determine $\beta$.

Let $\psi\in C_c^\infty(\mathbb{R}^2)$ be a function such that $\psi\equiv1$ in $B_1$, $\psi\equiv0$ in $\mathbb{R}^2\setminus B_2$, and $0\leq \psi\leq 1$. By \eqref{eq:5.22}, the function $\psi_R=\psi(R^{-1}x)$  verifies
\begin{equation}\label{eq:5.30}
\int_{\mathbb{R}^2}v_s(-\Delta)^s\psi_R\,dx+\int_{\mathbb{R}^2}|x|^2v_s\psi_R\,dx
-\int_{\mathbb{R}^2}|v_s|^2v_s\psi_R\,dx-\mu_s\int_{\mathbb{R}^2}v_s\psi_R\,dx=0.
\end{equation}
For $s_k>\frac12$, by the H\"{o}lder inequality, $\|{v}_{s_k}\|_2^2=N$ and the Plancherel theorem,
\begin{equation}\label{eq:5.31}
\begin{split}
\Big|\int_{\mathbb{R}^2}{v}_{s_k}(-\Delta)^{s_k}\psi_R\,dx\Big|
&=R^{-2s_k}\Big|\int_{\mathbb{R}^2}{v}_{s_k}(-\Delta)^{s_k}
\psi(\frac xR )\,dx\Big|\\
&\leq R^{-2s_k}\Big(\int_{\mathbb{R}^2}{v}_{s_k}^2\,dx\Big)^\frac12
\Big(\int_{\mathbb{R}^2}|(-\Delta)^{{s_k}}\psi(\frac xR )|^2\,dx\Big)^\frac12\\
&=R^{1-2s_k}N^{\frac12}\Big(\int_{\mathbb{R}^2}|(-\Delta)^{{s_k}}\psi|^2\,dx\Big)^\frac12\\
&=R^{1-2s_k}N^{\frac12}\Big(\int_{\mathbb{R}^2}{|\xi|}^{2s_k}|\hat{\psi}|^2\,dx\Big)^\frac12\\
&\leq R^{1-2s_k}N^{\frac12}\Big(\int_{\mathbb{R}^2}{|\xi|}^{2}|\hat{\psi}|^2\,dx\Big)^\frac{s_k}2
\Big(\int_{\mathbb{R}^2}|\hat{\psi}|^2\,dx\Big)^\frac{1-s_k}2\\
&\leq CR^{1-2s_k}N^{\frac12}\to0, \quad as  \quad k\to \infty.
\end{split}
\end{equation}
The fact that $\beta\neq0$ and \eqref{eq:5.28} yields that for $k$ large enough, $-\mu_k\geq\frac12\beta^2\varepsilon_k^{-2s_k}$. This with \eqref{eq:5.30} yields
\begin{equation}\label{eq:5.32}
\begin{split}
&\int_{\mathbb{R}^2}v_{k}(-\Delta)^{s_k}(\psi(\frac xR ))\,dx+\frac12\beta^2\varepsilon_k^{-2s_k}\int_{\mathbb{R}^2}v_{k}\psi(\frac xR )\,dx\\
&\leq
\int_{\mathbb{R}^2}v_{k}(-\Delta)^{s_k}(\psi(\frac xR ))\,dx-\mu_{s_k}\int_{\mathbb{R}^2}v_{k}\psi(\frac xR) \,dx\\
&\leq\int_{\mathbb{R}^2}v_{k}^3\psi(\frac xR) \,dx.
\end{split}
\end{equation}
Letting $R\to+\infty$ in \eqref{eq:5.32}, by \eqref{eq:5.31}, we obtain
\begin{equation}\label{eq:5.33}
\begin{split}
\int_{\mathbb{R}^2}v_{k}\,dx&\leq2\beta^{-2}\varepsilon_k^{2s_k}\int_{\mathbb{R}^2}v_{k}^3 \,dx\\
&\leq2\beta^{-2}\varepsilon_k^{2s_k}\Big(\int_{\mathbb{R}^2}v_{k}^4 \,dx\Big)^{\frac12}\Big(\int_{\mathbb{R}^2}v_{k}^2 \,dx\Big)^{\frac12}\\
&=2\beta^{-2}N^{\frac12}\varepsilon_k^{2s_k}\Big(\int_{\mathbb{R}^2}v_{k}^4 \,dx\Big)^{\frac12}.
\end{split}
\end{equation}
Thus, Lemma \ref{lem:5.2} implies
\begin{equation}\label{eq:5.34}
\begin{split}
\int_{\mathbb{R}^2}v_{k}\,dx
&\leq2\beta^{-2}N^{\frac12}\varepsilon_k^{2s_k}\Big(2s\int_{\mathbb{R}^2}|(-\Delta)^{\frac{s_k}{2}}v_{k}|^2 \,dx\Big)^{\frac12}\\
&=2\beta^{-2}N^{\frac12}\Big(\int_{\mathbb{R}^2}|(-\Delta)^{\frac{s_k}{2}}v_{k}|^2 \,dx\Big)^{-1}\Big(2s\int_{\mathbb{R}^2}|(-\Delta)^{\frac{s_k}{2}}v_{k}|^2 \,dx\Big)^{\frac12}\\
&=2\sqrt{2s}\beta^{-2}N^{\frac12}\Big(\int_{\mathbb{R}^2}|(-\Delta)^{\frac{s_k}{2}}v_{k}|^2 \,dx\Big)^{-\frac12}.
\end{split}
\end{equation}
Since $v_k$ is radially decreasing with respect to $|x|$, by \eqref{eq:5.34}, we have
\begin{equation*}
\begin{split}
v_k(x)&=v_k(|x|)\\
&\leq C|x|^{-2}\int_{|y|\leq |x|}v_k(y)\,dy\\
&\leq C|x|^{-2}\int_{\mathbb{R}^2}v_k(y)\,dy\\
&\leq 2C\sqrt{2s}\beta^{-2}N^{\frac12}\Big(\int_{\mathbb{R}^2}|(-\Delta)^{\frac{s_k}{2}}v_{k}|^2 \,dx\Big)^{-\frac12}|x|^{-2}.
\end{split}
\end{equation*}
This and \eqref{eq:5.34} yield that
\begin{equation}\label{eq:5.35}
\begin{split}
\int_{\mathbb{R}^2}|x|^2|v_k|^2\,dx&\leq 2\sqrt{2s}\beta^{-2}N^{\frac12}\Big(\int_{\mathbb{R}^2}|(-\Delta)^{\frac{s_k}{2}}v_{k}|^2 \,dx\Big)^{-\frac12}
\int_{\mathbb{R}^2}v_{k}\,dx\\
&\leq 8sN\beta^{-4}\Big(\int_{\mathbb{R}^2}|(-\Delta)^{\frac{s_k}{2}}v_{k}|^2 \,dx\Big)^{-1}\to 0 \quad as  \quad k\to \infty.
\end{split}
\end{equation}
By \eqref{eq:5.14}, \eqref{eq:5.15} and \eqref{eq:5.35}, we have
\[
\lim_{k\to\infty}{\|(-\Delta)^{\frac {s_k}{2}}v_{s_k}\|_{L^2(\mathbb{R}^2)}^2}{\Big(\frac{N_{s_k}^*}{N}\Big)^{-\frac{s}{1-s}}}=N
\]
and
\begin{equation}\label{eq:5.36}
\lim_{k\to\infty}\frac{\int_{\mathbb{R}^2}|v_k|^4\,dx}{\int_{\mathbb{R}^2}|(-\Delta)^{\frac{s_k}{2}}v_k|^2\,dx}=2.
\end{equation}
This and \eqref{eq:5.27} yield that $\mu_k\varepsilon_k^{2s_k}\to -N^{-1}$, and then $\beta^2=N^{-1}$.
The proof is complete.\quad $\Box$

\bigskip

\bigskip

\section{Further discussion}

\bigskip

In this section,   we discuss the  attractive fractional Gross-Pitaevskii equation
 \begin{equation}\label{eq:6.1}
i\psi_t(x,t)=(-\Delta)^s\psi(x,t)+V(x)\psi(x,t)-|\psi|^2\psi(x,t),\ \  (x,t)\in \mathbb{R}^2\times\mathbb{R}
\end{equation}
with  the general potential $V(x)$ in two dimensions. A standing wave solution $\psi(x,t)=e^{-i\mu_s t}u(x)$ of equation \eqref{eq:6.1}
fulfills that
\begin{equation}\label{eq:6.2}
(-\Delta)^su+V(x)u= |u|^2u+\mu_s u.
\end{equation}
Its corresponding energy functional becomes
\begin{equation}\label{eq:6.3}
E_{N,s}^V(u)=\frac{1}{2}\int_{\mathbb{R}^2}\big(|(-\Delta)^{\frac s2} u(x)|^2+V(x) |u(x)|^2\big)\,dx-\frac{1 }{4}\int_{\mathbb{R}^2}|u(x)|^4\,dx.
\end{equation}

\bigskip

We assume that

$(V_0)$ $V(x)\geq 0, \quad \lim_{|x|\to +\infty}V(x)=+\infty$.

\bigskip

The hypothesis $(V_0)$ allows us to show as the proof of Lemma \ref{lem:2.7}  that

\begin{Lemma}\label{lem:6.1}Let $\mathcal{H}_{s,V}$ be defined in \eqref{eq:2.1a}. The inclusion $\mathcal{H}_{s,V}\hookrightarrow L^p(\mathbb{R}^2)$ is compact for $ p\in[2,\frac 2{1-s})$.
\end{Lemma}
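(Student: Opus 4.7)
The plan is to mimic the argument of Lemma \ref{lem:2.7}, replacing the role of the confining potential $|x|^2$ by the general potential $V(x)$, whose coercivity at infinity is provided by hypothesis $(V_0)$.

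First I would take an arbitrary bounded sequence $\{u_n\}\subset \mathcal{H}_{s,V}$ and, up to a subsequence, assume $u_n\rightharpoonup u$ weakly in $\mathcal{H}_{s,V}$ and $u_n\to u$ strongly in $L^p_{\loc}(\mathbb{R}^2)$ for every $p\in[2,\tfrac{2}{1-s})$ by the standard local fractional Rellich--Kondrachov theorem (recall $H^s(\mathbb{R}^2)$ embeds compactly into $L^p_{\loc}(\mathbb{R}^2)$ in that range). Weak lower semicontinuity gives $\int_{\mathbb{R}^2} V(x)|u|^2\,dx \leq \liminf_n \int_{\mathbb{R}^2} V(x)|u_n|^2\,dx < +\infty$, so $u\in \mathcal{H}_{s,V}$.

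The main step is the tail estimate. Given $\varepsilon>0$, since $V(x)\to +\infty$ as $|x|\to\infty$ by $(V_0)$, I can pick $R>0$ so large that $V(x)\geq \varepsilon^{-1}$ whenever $|x|\geq R$. Because $\{u_n\}$ is bounded in $\mathcal{H}_{s,V}$, there is $M>0$, independent of $n$, such that $\int_{\mathbb{R}^2}V(x)|u_n|^2\,dx\leq M$ and likewise for $u$. Then
\begin{equation*}
\int_{\{|x|\geq R\}}|u_n|^2\,dx \leq \varepsilon\int_{\{|x|\geq R\}}V(x)|u_n|^2\,dx \leq \varepsilon M,
\end{equation*}
and the same bound holds for $u$. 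Combining with the local strong $L^2$ convergence on $\{|x|\leq R\}$ (which gives $\int_{\{|x|\leq R\}}|u_n-u|^2\,dx<\varepsilon$ for $n$ large) and splitting $\mathbb{R}^2=\{|x|\leq R\}\cup\{|x|\geq R\}$, one obtains $u_n\to u$ strongly in $L^2(\mathbb{R}^2)$.

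Finally, I would upgrade this to strong convergence in $L^p(\mathbb{R}^2)$ for $p\in(2,\tfrac{2}{1-s})$ by applying the fractional Gagliardo--Nirenberg inequality \eqref{eq:2.5} to the difference $u_n-u$: interpolating its vanishing $L^2$ norm against its uniformly bounded $\|(-\Delta)^{s/2}(u_n-u)\|_2$ yields $\|u_n-u\|_p\to 0$. The only subtle point in the whole argument is verifying the uniform tail bound, but this follows directly from the coercivity of $V$ in $(V_0)$, exactly as $|x|^2$ was used in the proof of Lemma \ref{lem:2.7}. No other obstacle is expected, and the conclusion follows.
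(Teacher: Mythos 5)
Your proposal is correct and follows exactly the route the paper intends: the paper proves Lemma \ref{lem:6.1} by repeating the proof of Lemma \ref{lem:2.7} verbatim, with the uniform tail bound $\int_{\{|x|\geq R\}}|u_n|^2\,dx\leq \varepsilon\int_{\mathbb{R}^2}V(x)|u_n|^2\,dx$ coming from the coercivity in $(V_0)$ in place of the factor $R^{-2}$ from the harmonic potential, and the upgrade to $L^p$ via the fractional Gagliardo--Nirenberg inequality \eqref{eq:2.5}. No gaps.
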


\bigskip

Define
\[
S^V(N)=\{u\in\mathcal{H}_{s,V}:\int_{\mathbb{R}^2}|u|^2\,dx=N\}.
\]
and
\[
A^V_{t_s}=\{u\in S^V(N):\int_{\mathbb{R}^2}|(-\Delta)^{\frac s2}u|^2\,dx\leq t_s\},
\]
where $t_s$ is defined in \eqref{eq:1.19}.

With the  help of Lemma \ref{lem:6.1},
we may prove  in the same way as the proof of Theorem \ref{thm:1} that

\begin{Theorem}\label{thm:6.1}
Suppose $(V_0)$ and $0<N<N^*$. There exists $\varepsilon_N>0$  such that for any $s\in (1-\varepsilon_N,1)$, $E^V_{N,s}(u)$  admits a nonnegative   minimizer $u_s$ in $A_{t_s}^V$, that is
\begin{equation}\label{eq:6.4}
E^V_{N,s}(u_s)=\inf_{u\in A_{t_s}^V}E^V_{N,s}(u)\quad ,\quad  E^V_{N,s}(u_s)<\inf_{u\in \partial A_{t_s}^V}E^V_{N,s}(u),
\end{equation}
and $u_s$ is a weak solution of  \eqref{eq:6.2}.
\end{Theorem}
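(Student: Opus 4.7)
The plan is to run the proof of Theorem \ref{thm:1} essentially verbatim, with the harmonic trap $|x|^2$ replaced by the general potential $V(x)$ and the compact embedding Lemma \ref{lem:2.7} replaced by its generalization Lemma \ref{lem:6.1}. The hypothesis $V\geq 0$ preserves every lower bound used in Section~3, while the coercivity $V(x)\to+\infty$ supplies the compactness needed to extract a strongly convergent subsequence from a minimizing sequence. No new analytic ingredient beyond Lemma \ref{lem:6.1} is required.

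First I would reproduce the two-sided estimate of Lemma \ref{lem:3.1}. Since $V\geq 0$, the fractional Gagliardo-Nirenberg inequality \eqref{eq:2.6} gives, for $u\in S^V(N)$,
\[
E^V_{N,s}(u)\geq \frac{1}{2}\int_{\mathbb{R}^2}|(-\Delta)^{s/2}u|^2\,dx - \frac{C_0}{4}\Big(\int_{\mathbb{R}^2}|(-\Delta)^{s/2}u|^2\,dx\Big)^{1/s} N^{2-1/s},
\]
so the function $f(t)=t-\lambda_s N^{2-1/s}(N_s^*)^{-1}t^{1/s}$ appearing in Lemma \ref{lem:3.1} still controls $E^V_{N,s}$ from below, and its maximum at $t=t_s$ yields $\inf_{u\in\partial A^V_{t_s}}E^V_{N,s}(u)\geq \tfrac12(1-s)t_s$. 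For the upper bound, pick $\varphi\in C_c^\infty(\mathbb{R}^2)$ with $\|\varphi\|_2^2=N$; since $\varphi$ is compactly supported and $V\in L^1_{\loc}$, the integral $\int V|\varphi|^2\,dx$ is finite and $s$-independent, and the Fourier bound \eqref{eq:3.4} forces $E^V_{N,s}(\varphi)\leq C$ uniformly in $s$. Because $0<N<N^*$ and $N_s^*\to N^*$, we have $t_s\to+\infty$ as $s\to 1_-$, so for $s$ close enough to $1$ both $\varphi\in A^V_{t_s}$ and $C<\tfrac12(1-s)t_s$ hold, giving the crucial gap $e^V_N(s)\leq C<\inf_{u\in\partial A^V_{t_s}}E^V_{N,s}(u)$.

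For any minimizing sequence $\{u_n\}\subset A^V_{t_s}$, the kinetic bound $\int|(-\Delta)^{s/2}u_n|^2\,dx\leq t_s$ combined with $E^V_{N,s}(u_n)\leq 2e^V_N(s)$ and $V\geq 0$ forces $\int V|u_n|^2\,dx\leq C$, so $\{u_n\}$ is bounded in $\mathcal{H}_{s,V}$. By Lemma \ref{lem:6.1}, along a subsequence $u_n\rightharpoonup u$ weakly in $\mathcal{H}_{s,V}$ and $u_n\to u$ strongly in $L^2\cap L^4$, with $u\in A^V_{t_s}$ by weak lower semicontinuity of the fractional kinetic term. Mimicking \eqref{eq:3.9},
\[
\lim_{n\to\infty}\|u_n\|_{\mathcal{H}_{s,V}}^2 = 2e^V_N(s)+\tfrac12\int_{\mathbb{R}^2}|u|^4\,dx \leq 2E^V_{N,s}(u)+\tfrac12\int_{\mathbb{R}^2}|u|^4\,dx = \|u\|_{\mathcal{H}_{s,V}}^2,
\]
and weak lower semicontinuity of the norm supplies the reverse inequality, forcing strong convergence in $\mathcal{H}_{s,V}$ and $E^V_{N,s}(u)=e^V_N(s)$. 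The strict gap from the previous step places $u$ in the interior of $A^V_{t_s}$, so $u$ is a critical point of $E^V_{N,s}|_{S^V(N)}$, hence a weak solution of \eqref{eq:6.2} with Lagrange multiplier $\mu_s=N^{-1}\langle (E^V_{N,s})'(u),u\rangle$. Lemma \ref{lem:2.6} yields $E^V_{N,s}(|u|)\leq E^V_{N,s}(u)$, so $|u|$ is also a minimizer and $u_s:=|u|\geq 0$ is the desired nonnegative solution.

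The only genuinely new ingredient is the compact embedding in Lemma \ref{lem:6.1}, proved by the standard adaptation of the argument for Lemma \ref{lem:2.7}: the tail estimate $\int_{\{|x|\geq R\}}|u_n|^2\,dx\leq (\inf_{|x|\geq R}V)^{-1}\int_{\mathbb{R}^2}V|u_n|^2\,dx$ uses $V(x)\to+\infty$ in place of the weight $|x|^2$, and combines with local Rellich compactness to give strong $L^2$ convergence, upgraded to $L^p$ via \eqref{eq:2.5}. This coercivity is the only place $(V_0)$ is substantively invoked, and checking it cleanly is the main (though mild) obstacle; the rest of the proof is structurally identical to Sections~2--3.
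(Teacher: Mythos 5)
Your proposal is correct and follows exactly the route the paper takes: the paper itself only states that Theorem \ref{thm:6.1} ``may be proved in the same way as Theorem \ref{thm:1}'' once Lemma \ref{lem:2.7} is replaced by Lemma \ref{lem:6.1}, and your write-up supplies precisely those details (the $V\geq 0$ lower bound, the compactly supported test function for the upper bound, and the coercivity-based compactness). No discrepancy to report.
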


\bigskip

If we  assume in addition that

$(V_1)$ $V$ is radially symmetric and increasing with respect to $|x|$;

$(V_2)$ $V\in C^1(\mathbb{R}^n,\mathbb{R})$ and $|x\cdot \nabla V|\leq C_1(1+|V|)$ with $C_1>0$;


\bigskip

 Replacing \eqref{eq:3.13} and \eqref{eq:3.14a} in the proof of Theorem \ref{thm:2} by
\begin{equation}\label{eq:6.4a}
s\int_{\mathbb{R}^2}|(-\Delta)^{\frac{s}{2}}v_s|^2\,dx-\int_{\mathbb{R}^2}(x\cdot \nabla V)|v_s|^2\,dx
-\frac12\int_{\mathbb{R}^2}|v_s|^4\,dx=0.
\end{equation}
and
\begin{equation}\label{eq:6.4b}
\begin{split}
2E_{N,s}^V(v_s)&=\int_{\mathbb{R}^2}\big(|(-\Delta)^{\frac s2}v_s|^2+|x|^2|v_s|^2\big)\,dx-\frac12\int_{\mathbb{R}^2}|v_s|^4\,dx\\
&=(1-s)\int_{\mathbb{R}^2}|(-\Delta)^{\frac s2}v_s|^2\,dx+\int_{\mathbb{R}^2}(V+\frac12x\cdot \nabla V)|v_s|^2\,dx\\
&=(1-s)\int_{\mathbb{R}^2}|(-\Delta)^{\frac s2}v_s|^2\,dx+\int_{\mathbb{R}^2}(V+\frac12|x|V_r)|v_s|^2\,dx\\
&\leq 2E_{N,s}^V(u_s)\leq C,
\end{split}
\end{equation}
respectively,  we may deduce as  Theorem \ref{thm:2} that

\begin{Theorem}\label{thm:6.2} Assume $V$ satisfies   $(V_0)$-$(V_2)$. Let $u_s$   be a nonnegative local minimizer of $E^V_{N,s}(u)$ inside $A_{t_s}$.
Suppose $0<N<N^*$. There exists $\varepsilon_N>0$  such that for  $s\in (1-\varepsilon_N,1)$, we have that

$(i)$ $u_s$ is radially symmetric and decreasing with respect to $|x|$;

$(ii)$ $u_s$ is  a ground state of $E^V_{N,s}(u)$ on $S^V(N)$;

$(iii)$ The following uniform bounds
\begin{equation}\label{eq:6.6}
\int_{\mathbb{R}^2}|(-\Delta)^{\frac s2}u_s|^2\,dx\leq C
\end{equation}
and
\begin{equation}\label{eq:6.7}
\int_{\mathbb{R}^2}V(x)|u_s|^2\,dx\leq \frac C2
\end{equation}
hold, where $C>0$ is independent of $s$.
\end{Theorem}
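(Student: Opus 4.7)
The proof proposal follows the blueprint of Theorem \ref{thm:2}, transplanting the three ingredients — rearrangement, Pohozaev, and a blowup contradiction — to the general-potential setting. The hypotheses $(V_0)$–$(V_2)$ are designed precisely to keep each step alive: $(V_1)$ powers the rearrangement, $(V_2)$ powers the Pohozaev computation, and $(V_0)$ powers the compactness/blowup step.

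For (i) I would use Schwarz rearrangement as in Theorem \ref{thm:2}: rearrangement preserves $\|u\|_2$ and $\|u\|_4$, does not increase the fractional kinetic energy by Theorem A.1 of \cite{Frank-Seiringer2008}, and does not increase $\int V|u|^2$ by Lemma \ref{lem:2.8} applied with $(V_1)$. Hence $E^V_{N,s}(u_s^*)\leq E^V_{N,s}(u_s)$, and minimality of $u_s$ forces equality in every inequality. The equality case of Lemma \ref{lem:2.8} (with $V$ strictly increasing under $(V_1)$) then yields $u_s = u_s^*$. For (ii) the argument mirrors Theorem \ref{thm:2}: given any critical point $v_s\in S^V(N)$ with $E^V_{N,s}(v_s)\leq E^V_{N,s}(u_s)$, Lemma \ref{lem:2.1} with $f(u)=|u|^2u$ and $n=2$ produces the Pohozaev identity \eqref{eq:6.4a}. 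Substituting into the energy identity gives
\[
2E^V_{N,s}(v_s) = (1-s)\int_{\mathbb{R}^2}|(-\Delta)^{s/2}v_s|^2\,dx+\int_{\mathbb{R}^2}\big(V+\tfrac{1}{2}|x|V_r\big)|v_s|^2\,dx,
\]
and, crucially, $V_r\geq 0$ by $(V_1)$ so both terms are nonnegative. A counterpart of Lemma \ref{lem:3.1} (whose construction of a test function with $E^V_{N,s}(\varphi)\leq C$ independent of $s$ carries over under $(V_0)$, using $t_s\to+\infty$ for $0<N<N^*$) gives $E^V_{N,s}(u_s)\leq C$. Therefore $\int|(-\Delta)^{s/2}v_s|^2\leq C/(1-s)<\tfrac12 t_s$ for $s$ close to $1$, placing $v_s\in A^V_{t_s}$, so $E^V_{N,s}(v_s)\geq E^V_{N,s}(u_s)$, forcing equality and establishing (ii). The same display applied to $u_s$ itself yields $\int V|u_s|^2\leq 2E^V_{N,s}(u_s)\leq C$, giving the second half of (iii).

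The remaining and hardest step in (iii) is the $s$-uniform bound $\int|(-\Delta)^{s/2}u_s|^2\leq C$. I would argue by contradiction exactly as in Theorem \ref{thm:2}: suppose $\|(-\Delta)^{s_k/2}u_{s_k}\|_2\to\infty$ along a sequence $s_k\to 1_-$, set $\eta_k = \|(-\Delta)^{s_k/2}u_{s_k}\|_2^{-1/s_k}$ and $w_k(x)=\eta_k u_{s_k}(\eta_k x)$, so that $\|(-\Delta)^{s_k/2}w_k\|_2=1$ and $\|w_k\|_2^2=N$. Then $w_k$ is a very weak solution of
\[
(-\Delta)^{s_k}w_k+\eta_k^{2s_k}V(\eta_k x)w_k=\eta_k^{2s_k-2}|w_k|^2w_k+\mu_{s_k}\eta_k^{2s_k}w_k,
\]
and reproducing the chain \eqref{eq:3.18}–\eqref{eq:3.31} gives $\eta_k^{2s_k-2}\to 1$, $\int|w_k|^4\to 2$, a uniform $H^{3/4}$ bound, and $\mu_{s_k}\eta_k^{2s_k}\to -1/N$. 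Compact embedding $H^{3/4}_r(\mathbb{R}^2)\hookrightarrow L^p(\mathbb{R}^2)$, $2<p<8$, then yields a nontrivial limit $w$ satisfying $-\Delta w+N^{-1}w=|w|^2 w$ in very weak sense with $\|w\|_2^2\leq N$; rescaling $v=N^{-1/2}w(N^{-1/2}\cdot)$ and applying the sharp Gagliardo-Nirenberg inequality together with $N<N^*$ gives the same contradiction $\tfrac12 N^*\leq \tfrac12 N<\tfrac12 N^*$ as in Theorem \ref{thm:2}.

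The genuine obstacle is showing that the rescaled potential term $\eta_k^{2s_k}V(\eta_k x)w_k$ disappears in the limit. For $V(x)=|x|^2$ this is transparent because $\eta_k^{2s_k+2}|x|^2$ is explicit and $\eta_k\to 0$. For a general $V$ under $(V_0)$–$(V_2)$, I would test the rescaled equation against $\varphi\in C_c^\infty(\mathbb{R}^2)$ (supported in a ball $B_R$) and bound $\big|\eta_k^{2s_k}\int V(\eta_k x)w_k\varphi\big| \leq \eta_k^{2s_k}\|\varphi\|_\infty\int_{|y|\leq R\eta_k}|V(y)||u_{s_k}|\,dy$; using Cauchy–Schwarz with $\int V|u_{s_k}|^2\leq C$ together with $(V_2)$ (which prevents $V$ from growing faster than exponentially) and $\eta_k\to 0$ forces this to vanish. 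Once this vanishing is established, the rest of the proof is a verbatim copy of Theorem \ref{thm:2}(iii).
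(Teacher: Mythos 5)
Your proposal is correct and follows essentially the same route as the paper, which proves Theorem \ref{thm:6.2} simply by rerunning the proof of Theorem \ref{thm:2} with \eqref{eq:3.13} and \eqref{eq:3.14a} replaced by \eqref{eq:6.4a} and \eqref{eq:6.4b}. Your extra care with the rescaled potential term $\eta_k^{2s_k}V(\eta_k x)w_k$ in the blowup step is welcome (the paper is silent on it), though it is even easier than you suggest: since $V\in C^1$ is locally bounded and $\eta_k\to 0$, for $\varphi$ supported in $B_R$ one has $V(\eta_k x)\le \max_{|y|\le R}V(y)$ on $\operatorname{supp}\varphi$, so the term is $O(\eta_k^{2s_k})$ without any growth control on $V$ at infinity.
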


Assuming further that

$(V_3)$   $x\cdot \nabla V\geq C_2V-C_3$ with $C_2>0$ and $C_3\geq 0$.

Using the assumption $(V_3)$, we may prove the nonexistence of local minimizers of $E_{N,s}^V(u)$ in $A_{t_s}$ by a  method different from that of  Theorem \ref{thm:3}, such a  method does not depend the increase order of  $V$ as $x\to\infty$.

\begin{Theorem}\label{thm:6.3}Assume $V$ satisfies   $(V_0)$-$(V_3)$.
If $N>N^*$, there exists $\varepsilon^1_{N}>0$ such that for any $s\in (1-\varepsilon^1_{N},1)$, $E_{N,s}^V(u)$  admits no local positive minimizer in $A_{t_s}^V$.
\end{Theorem}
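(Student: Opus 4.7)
The plan is to proceed by contradiction: suppose that for some $s$ close to $1$ with $N>N^*$, $E^V_{N,s}$ does admit a positive local minimizer $u_s$ in the interior of $A^V_{t_s}$. Then $u_s$ satisfies the Euler-Lagrange equation
\[
(-\Delta)^s u_s + V u_s = u_s^3 + \mu_s u_s
\]
for some Lagrange multiplier $\mu_s\in\mathbb{R}$. Since $(V_2)$ holds, the Pohozaev identity of Lemma~\ref{lem:2.1} applies with $n=2$ and $f(u)=u^3+\mu_s u$ (so $f(u)u-2F(u)=u^4/2$), giving
\[
\int_{\mathbb{R}^2} u_s^4\,dx \;=\; 2s\int_{\mathbb{R}^2}|(-\Delta)^{s/2}u_s|^2\,dx \;-\; \int_{\mathbb{R}^2}(x\cdot\nabla V)u_s^2\,dx.
\]

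By $(V_1)$, $V$ is radial and increasing, so $x\cdot\nabla V=|x|V'(|x|)\geq 0$; combined with $u_s\in A^V_{t_s}$ this forces
\[
\int_{\mathbb{R}^2} u_s^4\,dx \;\leq\; 2s\int_{\mathbb{R}^2}|(-\Delta)^{s/2}u_s|^2\,dx \;\leq\; 2s\,t_s.
\]
Applying $(V_3)$, namely $x\cdot\nabla V\geq C_2V-C_3$, to the Pohozaev identity and dropping the non-negative $\int u_s^4$ yields the uniform-in-$s$ estimate
\[
\int_{\mathbb{R}^2} Vu_s^2\,dx \;\leq\; \frac{2s\,t_s+C_3N}{C_2},
\]
which stays bounded as $s\to 1_-$ because $t_s=\frac{N}{2s-1}(N_s^*/N)^{s/(1-s)}\to 0$ exponentially, thanks to $N>N^*$ and $N_s^*\to N^*$ (Lemma~\ref{lem:2.5}).

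Next I would split $\int u_s^2=N$ into the mass inside and outside a ball $B_R$. Cauchy-Schwarz combined with the bound above gives
\[
\int_{|x|<R} u_s^2\,dx \;\leq\; |B_R|^{1/2}\Big(\int_{\mathbb{R}^2} u_s^4\,dx\Big)^{1/2} \;\leq\; \sqrt{\pi}\,R\sqrt{2s\,t_s},
\]
while Chebyshev together with $\inf_{|x|\geq R}V=V(R)$ (from $(V_1)$) gives
\[
\int_{|x|\geq R} u_s^2\,dx \;\leq\; \frac{1}{V(R)}\int_{\mathbb{R}^2} Vu_s^2\,dx \;\leq\; \frac{2s\,t_s+C_3N}{C_2\,V(R)}.
\]
Choose $R_s:=N/(2\sqrt{2\pi s\,t_s})$ so that the first bound equals $N/2$. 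Since $t_s\to 0$, we have $R_s\to+\infty$, and by $(V_0)$, $V(R_s)\to+\infty$. Summing the two estimates produces
\[
\frac{N}{2} \;\leq\; \frac{2s\,t_s+C_3N}{C_2\,V(R_s)} \;\longrightarrow\; 0 \quad\text{as}\quad s\to 1_-,
\]
which is absurd for $s$ sufficiently close to $1$, and this provides the required $\varepsilon^1_N$.

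The main obstacle is pinpointing the cutoff radius $R_s\sim t_s^{-1/2}$: this is the critical balance dictated by Cauchy-Schwarz, and it can only be pushed to infinity when $t_s\to 0$, which is precisely what the hypothesis $N>N^*$ ensures. The role of $(V_1)$ is to make $\int u_s^4\leq 2s\,t_s$ via Pohozaev, while $(V_3)$ is exactly what keeps $\int V u_s^2$ bounded; conspicuously, only the qualitative fact $V(R)\to\infty$ from $(V_0)$ enters, so the argument is independent of the growth rate of $V$ at infinity, as highlighted in the paper.
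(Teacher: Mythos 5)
Your proposal is correct, and it reaches the contradiction by a genuinely different final step than the paper. You and the paper share the same opening moves: an interior local minimizer is a constrained critical point, so Lemma \ref{lem:2.1} applies (with $f(u)=u^3+\mu_s u$, so $f(u)u-2F(u)=u^4/2$; your version of the identity, with the factor $\tfrac12$ on $\int (x\cdot\nabla V)u_s^2$, is the correct reading of Lemma \ref{lem:2.1}), and then $(V_3)$ together with $u_s\in A^V_{t_s}$ gives the uniform bound on $\int V u_s^2$, while $N>N^*$ and $N_s^*\to N^*$ force $t_s\to0$. From there the paper argues by compactness: interpolation as in \eqref{eq:3.24} gives $\|(-\Delta)^{3/8}u_s\|_2\to0$, Lemma \ref{lem:6.1} extracts a subsequence converging strongly in $L^2$ to some $u$ with $\|u\|_2^2=N$, and weak lower semicontinuity of the $H^{3/8}$ seminorm forces $u=0$, a contradiction. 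You instead avoid any limit/compactness argument: smallness of $\|u_s\|_4^4\le 2s\,t_s$ (which you obtain from Pohozaev plus $x\cdot\nabla V\ge0$ from $(V_1)$; it could equally be obtained from the Gagliardo--Nirenberg inequality \eqref{eq:2.6}, so $(V_1)$ is not essential here) controls the mass on $B_{R_s}$ via Cauchy--Schwarz, Chebyshev with the $\int Vu_s^2$ bound controls the mass outside, and the explicit choice $R_s\sim t_s^{-1/2}\to\infty$ (with $V(R_s)\to\infty$ by $(V_0)$, and $V(R_s)>0$ for $s$ near $1$ so Chebyshev is legitimate) makes the two contributions sum to less than $N$ for $s$ close to $1$. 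Since all bounds depend only on $s$, $N$ and the constants in $(V_0)$--$(V_3)$, and not on the particular minimizer, this yields the uniform $\varepsilon^1_N$. What each approach buys: yours is more elementary and fully quantitative (in principle it produces an explicit $\varepsilon^1_N$) and does not need the compact embedding of Lemma \ref{lem:6.1}; the paper's is shorter given that Lemma \ref{lem:6.1} is already in place and reuses the blow-up/limiting machinery developed for Theorem \ref{thm:2}. One shared caveat: both arguments tacitly read "minimizer in $A^V_{t_s}$" as an interior (hence Euler--Lagrange) critical point, exactly as in the paper's proof of Theorem \ref{thm:3}.
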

\begin{proof} Let $N>N^*$. Recall that $t_s\to0$ as $s\to1_-$.
Similar to \eqref{eq:3.24}, we derive
\begin{equation}\label{eq:6.8}
\begin{split}
\int_{\mathbb{R}^2}|(-\Delta)^{\frac38}u_s|^2\,dx&\leq \Big(\int_{\mathbb{R}^2}|(-\Delta)^{\frac s{2}}u_s|^2\,dx\Big)^{\frac3{4s}} \Big(\int_{\mathbb{R}^2}|u_s|^2\,dx\Big)^{1-\frac3{4s}}\\
&\leq t_s^{\frac3{4s}}N^{1-\frac3{4s}}\to 0 \
\end{split}
\end{equation}
as $s\to1_-.$

We remark that $u_s$ satisfies \eqref{eq:6.4a}. Combining this fact with  $x\cdot \nabla V\geq C_2V-C_3$, we have
\begin{equation}\label{eq:6.9}
\begin{split}
C_2\int_{\mathbb{R}^2}V|u_s|^2\,dx&\leq \int_{\mathbb{R}^2}(x\cdot \nabla V)|u_s|^2\,dx+ C_3\int_{\mathbb{R}^2}|u_s|^2\,dx\\
&\leq s\int_{\mathbb{R}^2}|(-\Delta)^{\frac s{2}}u_s|^2\,dx+C_3N\\
&\leq st_s+C_3N\to C_3N
\end{split}
\end{equation}
as $s\to1_-.$

In accordance with  \eqref{eq:6.8}, \eqref{eq:6.9} and Lemma \ref{lem:6.1}, there exists $\{s_k\}$ with $s_k\to 1_-$ as $k\to\infty$ such that
\[
u_{s_k}\rightharpoonup u \quad in \quad \mathcal{H}_{s,V};
\]
\[
u_{s_k}\to u \quad in \quad L^2(\mathbb{R}^2).
\]
Moreover, $\|u\|_2^2=N$ and
\[
\int_{\mathbb{R}^2}|(-\Delta)^{\frac38}u|^2\,dx\leq\liminf_{k\to \infty}\int_{\mathbb{R}^2}|(-\Delta)^{\frac38}u_{s_k}|^2\,dx=0.
\]
This with $u\in L^2(\mathbb{R}^2)$ yields that $u=0$, which contradicts $\|u\|_2^2=N$.
\end{proof}

\bigskip

Assuming $V$ satisfies $(V_0)-(V_3)$ and  $\limsup_{x\to \infty}|x|^{-2}V(x)\leq C$ and $C>0$, we can deduce in  the same routine of the proof of Theorem \ref{thm:4}--\ref{thm:6} that corresponding results hold for
$E^V_{N,s}(u)$ and \eqref{eq:6.2}.

\vspace{2mm} \noindent{\bf Conflict of Interest} The authors declare that they have no conflict of interest.

\vspace{2mm} \noindent{\bf Funding}
Jinge Yang was supported by NNSF of China, No:12361025 and  by Jiangxi
Provincial Natural
Science Foundation, No:20232BAB201002 and No:20212ACB201003.
Jianfu Yang is supported by NNSF of China, No:12171212.

\vspace{2mm} \noindent{\bf Data Availability}  Data sharing not applicable to this article as no datasets were generated or analyzed during the current study.

{\small

\end{document}